\newtheorem{theorem}{Theorem}[section]
\newtheorem{lemma}[theorem]{Lemma}
\theoremstyle{definition}
\newtheorem{definition}[theorem]{Definition}
\theoremstyle{remark}
\newtheorem{remark}[theorem]{Remark}
\definecolor{bfonce}{rgb}{0.,0.,0.8}
\definecolor{bclair}{rgb}{0.87,0.92,1.}
\definecolor{rclair}{rgb}{1.,0.92,0.87}
\definecolor{orangec}{rgb}{1.,0.6,0.}
\definecolor{vertc}{rgb}{0.,0.8,0.6}
\definecolor{vertf}{rgb}{0.,0.7,0.0}
\definecolor{rougef}{rgb}{0.8,0.,0.}
\newcommand{\bfa}{{\boldsymbol a}}
\newcommand{\bfe}{{\boldsymbol e}}
\newcommand{\bfF}{{\boldsymbol F}}
\newcommand{\bff}{{\boldsymbol f}}
\newcommand{\bfG}{{\boldsymbol G}}
\newcommand{\bfn}{\boldsymbol n}
\newcommand{\bfu}{{\boldsymbol u}}
\newcommand{\bfx}{\boldsymbol x}
\newcommand{\bfy}{\boldsymbol y}
\newcommand{\bfvarphi}{{\boldsymbol \varphi}}
\newcommand{\dx}{\ \mathrm{d}\bfx}
\newcommand{\dt}{\ \mathrm{d} t}
\newcommand{\mesh}{{\mathcal M}}
\newcommand{\edge}{{\sigma}}
\newcommand{\edgeperp}{{\tau}}
\newcommand{\edges}{{\mathcal E}}
\newcommand{\edgesint}{{\mathcal E}_{\mathrm{int}}}
\newcommand{\edgesext}{{\mathcal E}_{\mathrm{ext}}}
\newcommand{\edgesinti}{{\mathcal E}^{(i)}_{\mathrm{int}}}
\newcommand{\edgesexti}{{\mathcal E}^{(i)}_{\mathrm{ext}}}
\newcommand{\edgesi}{{\edges\ei}}
\newcommand{\edged}{\epsilon}
\newcommand{\edgesd}{{\widetilde {\edges}}}
\newcommand{\edgesdint}{{\edgesd_{{\rm int}}}}
\newcommand{\edgespart}{{\mathfrak F}}
\newcommand{\ei}{^{(i)}}
\newcommand{\m}{^{(m)}}
\newcommand{\dive}{{\mathrm{div}}}
\newcommand{\gradi}{\boldsymbol \nabla}
\newcommand{\diam}{{\mathrm{diam}}}
\newcommand{\llbracket}{\bigl[ \hspace{-0.55ex} |}
\newcommand{\rrbracket}{| \hspace{-0.55ex} \bigr]}
\newcommand{\characteristic}{{1 \hspace{-0.8ex} 1}}
\newcommand{\mnn}{{m\in\xN}}
\newcommand{\eg}{{\it e.g.}}
 \newcommand{\xL}{\mathrm{L}}
\begin{document}
\title[A quasi - second order staggered scheme for shallow water]
{A consistent  quasi - second order staggered scheme for the two-dimensional shallow water equations}
%

\author{R. Herbin}
\address{I2M UMR 7373, Aix-Marseille Universit\'e, CNRS, Ecole Centrale de Marseille. 
39 rue Joliot Curie. 13453 Marseille, France. \\ (raphaele.herbin@univ-amu.fr)}

\author{J.-C. Latch\'e}
\address{IRSN, BP 13115, St-Paul-lez-Durance Cedex, France (jean-claude.latche@irsn.fr)}

\author{Y. Nasseri}
\address{I2M UMR 7373, Aix-Marseille Universit\'e, CNRS, Ecole Centrale de Marseille. 
39 rue Joliot Curie. 13453 Marseille, France. \\ (youssouf.nasseri@univ-amu.fr)}

\author{N. Therme}
\address{CEA/CESTA 33116, Le Barp, France (nicolas.therme@cea.fr)}.

\subjclass[2010]{Primary 65M08, 76N15 ; Secondary 65M12, 76N19}
\keywords{Finite-volume scheme, MAC gird, shallow water flow.}
%


\begin{abstract}
{
A quasi-second order scheme is developed to obtain approximate solutions of the two-dimensional shallow water equations with bathymetry.
The scheme is based on a staggered  finite volume space discretisation: the scalar unknowns are located in the discretisation cells while the vector unknowns are located on the edges of the mesh.
A MUSCL-like interpolation for the discrete convection operators in the water height and momentum balance equations is performed in order to improve the accuracy of the scheme.
The time discretisation is performed either by a first order segregated forward Euler scheme or by the second order Heun scheme.
Both schemes are shown to preserve the water height positivity under a CFL condition and an important state equilibrium known as the lake at rest.
Using some recent Lax-Wendroff type results for staggered grids, these schemes are shown to be \emph{LW-consistent} with the weak formulation of the continuous equations, in the sense that if a sequence of approximate solutions is bounded and strongly converges to a limit, then this limit is a weak solution of the shallow water equations ; besides, the forward Euler scheme is shown to be \emph{LW-consistent} with a weak entropy inequality.
Numerical results confirm the efficiency and accuracy of the schemes.
}

\end{abstract}

\maketitle

\hspace{.8cm}{\bf Keywords} {Finite-volume scheme, MAC grid, shallow water flow.}
\\

%
%
\section{Introduction} \label{sec1:pbcont}

The shallow water equations (SWE) form a hyperbolic system of two conservation laws (mass and momentum balance equations) which models the flow of an incompressible fluid, assuming that the range of the vertical height of the flow is small compared to the horizontal scales.
This model is widely used for the simulation of numerous geophysical phenomena, such as flow in rivers and coastal areas, lava flows or snow avalanches; many other applications may be found as, for instance, in process industries.

The SWE with bathymetry, posed over a space-time domain $\Omega \times (0,T)$,  where $\Omega$ is an open bounded subset of $\xR^2$ of boundary $\partial \Omega$ and $T >0$, read
\begin{subequations} \label{eq:sw}
\begin{align}\label{eq:mass} &
\partial_t h + \dive (h \bfu) =0
&&
\mbox{in } \; \Omega \times (0, T),
\\ \label{eq:mem} &
\partial_t (h \bfu) + \dive(h \bfu \otimes \bfu) + \gradi p + g h \gradi z = 0
&&
\mbox{in }\ \Omega \times (0, T),
\\ \label{bc} &
p = \frac{1}{2} g h^2
&&
\mbox{in }\ \Omega \times (0, T),
\\ \label{bc:ins} &
\bfu \cdot \bfn =0
&&
\mbox{ on }\ \partial\Omega \times (0,T),
\\ \label{eq:ini_cond} &
h(\bfx, 0) = h_0, \,\, \bfu(\bfx,0)=\bfu_0
&&
\mbox{ in }\Omega,
\end{align}
\end{subequations}
where the unknowns are the water height $h$ and the (vector valued) horizontal velocity of the fluid $\bfu=(u_1,u_2)$, averaged over the fluid depth; $g$ is the gravity constant and $z$ the (given) bathymetry, supposed to be regular in this paper.
The initial conditions, featured in \eqref{eq:ini_cond}, are $h_0 \in \xL^\infty(\Omega)$ and $\bfu_0 =(u_{0,1},u_{0,2})\in \xL^\infty(\Omega,\xR^2)$ with $h_0 \ge 0$.
We suppose here that the boundary conditions boil down to \eqref{bc:ins}, {\it i.e.}\ an impermeability boundary condition.

Let us recall that if $(h,\bfu)$ is a regular solution of \eqref{eq:sw}, the following potential energy balance and kinetic energy balance are obtained by manipulations on the mass and momentum equations:
\begin{align} \label{pot_bal} &
\partial_t (\frac{1}{2} g h^2) + \dive (\frac{1}{2} g h^2 \bfu) + \frac{1}{2} g h^2 \dive \bfu = 0,
\\ \label{kin_bal} &
\partial_t (\frac{1}{2} h \vert \bfu \vert^2) + \dive (\frac{1}{2} h  \vert \bfu \vert^2 \bfu) + \bfu \cdot \gradi p  + gh \bfu \cdot \gradi z = 0.
\end{align}
Summing these equations, we obtain an entropy balance equation: $\partial_t E + \dive \Phi = 0$, where the entropy-entropy flux pair $(E, \Phi)$ is given by:
\begin{equation}\label{def:entropy}
E = \frac 1 2  h|\bfu|^2 + \frac 1 2 g h^2 + ghz \text{ and } \Phi =(E +  \frac 1 2 g h^2) \bfu.
\end{equation}
For non regular functions, the above manipulations are no longer valid, and the entropy inequality $\partial_t E + \dive \Phi \leq 0$ is satisfied in a distributional sense.

The system \eqref{eq:sw} has been intensively studied, both theoretically and numerically, and it is impossible to give a comprehensive list of references; we thus refer for an introduction to classical textbooks, \eg\ \cite{tan-92-sha, bou-04-non}, and to more recent reviews \cite{aud-18-hdr, cas-17-wel, xin-17-num} and references therein.
If no dry zone exists, the system is known to be strictly hyperbolic, and its solution may develop shocks, so that the finite volume method is often preferred for numerical simulations.
In such a context, two main approaches for the spatial discretisation are found in the literature: the first one is the colocated approach, where the expression of the numerical fluxes usually relies on (approximate or exact) Riemann solvers, see \eg\ \cite{bou-04-non,cas-17-wel} and references therein; the other one is based on a staggered arrangement of the unknowns on the grid.
This latter approach is quite classical in the hydraulic and ocean engineering community, where it is used on rectangular grids and known as the Arakawa-C discretisation, see \eg\ \cite{ara-81-pot, bon-05-ana, ste-03-sta}; this rectangular staggered arrangement is also known as the Marker-And-Cell (MAC) discretisation \cite{har-65-num, har-71-num}.
With this space discretisation, the development of Riemann solvers is made difficult by the fact that the discrete water height and velocity can no more be considered as piecewise constant on the same partition of the computational domain; in fact, we are not aware of any attempt in this direction.
Instead, the numerical diffusion necessary to the scheme stability is obtained by a simple upwinding of the transport terms with respect to the material velocity; this makes such schemes very simple to implement and efficient (since the fluxes evaluation is straightforward), which probably explains their popularity.
An important feature comforting their use is that, despite this simplicity of the upwinding technique, they may be shown to inherit the stability properties of the continuous problem: non-negativity of the water height, preservation of the so-called "lake at rest" steady state; a careful design of the velocity convection operator, which is an essential ingredient for the entropy consistency, also yields the $\xL^2$-stability of first-order discretisations \cite{her-10-kin}.
Note also that such schemes admit natural semi-implicit variants (pressure correction schemes) h are numerically efficient and unconditionally stable, in the sense that they preserve the positivity of the water height and satisfy an entropy inequality without restrictive assumptions on the time step.
Finally, since the native numerical diffusion only depends on the material velocity, the accuracy is not lost in low Mach number situations (or zones).

While colocated schemes for the SWE have been the object of numerous mathematical studies in the last decades, the theoretical numerical analysis of staggered schemes for the SWE has only been recently undertaken.
A staggered scheme with an upwind choice for the convection operators and a forward Euler time discretisation is proposed and analysed in the case of one space dimension in \cite{doy-14-exp, gun-15-num}; the analysis is based on closely related works on the barotropic Euler equations, see \cite{her-18-con} and references therein.
In particular, in this one-dimensional setting, the Lax-Wendroff consistency  of the scheme (or LW-consistency for short) is shown as well as a the LW-entropy consistency, in the sense that if the scheme is assumed to converge strongly and in a bounded way, then the limit is a weak (or entropy weak) solution of the continuous problem (see \cite{lax-60-sys} for the seminal result).
A staggered scheme, still first order in time and space and with fluxes derived through the kinetic approach, is proposed in \cite{ber-15-kin} for the barotropic Euler equations, and a second order in space and first order in time scheme is studied in \cite{dur-20-ene} for the SWE; in these two works, the consistency issue is not addressed.

We present in this paper staggered schemes for the solution of SWE, and our aim is twofold:
\begin{list}{--}{\itemsep=0.ex \topsep=0.5ex \leftmargin=1.cm \labelwidth=0.7cm \labelsep=0.3cm \itemindent=0.cm}
\item First, we analyze and test a class of \emph{second order in time and space} schemes for the SWE, which were briefly presented in \cite{gal-20-sec}.
In this respect, the originality with previous works on staggered grids first lies in the formulation of the numerical flux for the convection operator, which is general enough to include the first order upwind choice already studied in \cite{her-19-dec} and a quasi-second order MUSCL-like procedure originally introduced in \cite{pia-13-for}.
This latter formulation is based on a purely algebraic limiter which includes several well-known higher order schemes.
The second order in time scheme is obtained by switching from a first order Euler time discretisation to the second order Heun (or RK2) method, thus really improving the accuracy of the approximate solution in regular zones.
Generic properties are shown to be preserved, such as the positivity of the water height and the preservation of the "lake at rest" steady state.
\item Second, the schemes are proven to be  LW-consistent, thanks to a generalised Lax-Wendroff theorem which is recalled in the appendix and was designed specially for this kind of application.
As far as we know, this is the first proof of LW-consistency of a numerical scheme for the SWE in the multidimensional setting.
Note in passing that this proof does not require the $BV$ boundedness that is classically required for the LW-consistency analysis of hyperbolic conservation laws.
Moreover, the first order in time scheme is also shown to be  \emph{LW-entropy consistent} in the sense that under some boundedness assumptions (which unfortunately, now includes a time $BV$ bound that seems difficult to bypass), the limit of any strongly converging sequence of approximate solutions converges to an entropy weak solution of the SWE \eqref{eq:sw}.
\end{list}
The proposed scheme may be easily extended to non-structured discretisations; however, since it copes with dry zones, it is often more efficient in practice to embed the computational domain in a larger domain which may be meshed by a structured discretisation.
We thus restrict the exposition to the case of non-uniform rectangular meshes.

This paper is organized as follows.
In Section 2, we introduce the space and time discretisations.
The discrete stability and well-balanced properties of the approximate solutions are stated and proven in Section \ref{sec:stab}.
Furthermore, under some convergence and boundedness assumptions, the approximate solutions are shown in Section \ref{sec:cons} to converge to a weak solution of the SWE \eqref{eq:sw}.
This proof heavily relies on a generalized Lax-Wendroff theorem \cite{gal-21-wea} which was recently proven to simplify the proofs of consistency of staggered schemes.
It is given in the appendix in a form adapted to the present case (see Theorem \ref{theo:lw}).
In Section \ref{sec:entropy}, we consider the first order time discretisation and show that any possible limit of the scheme satisfies a weak entropy inequality, again using the consistency result.
An in-depth numerical study of the schemes is presented in Section \ref{sec:num}.
%
%
\section{Space and time discretisation}

\subsection{Definitions and notations} \label{sec:discop}

Let $\Omega$ be a connected subset of $\xR^2$ consisting in a union of rectangles whose edges are assumed to be orthogonal to the canonical basis vectors, denoted by $\bfe^{(1)}$ and $\bfe^{(2)}$.

\begin{definition}[MAC discretisation] \label{def:MACgrid}
A discretisation $(\mesh, \edges)$ of $\Omega$ with a staggered rectangular grid (or MAC grid), is defined by:
\begin{list}{--}{\itemsep=0.ex \topsep=0.5ex \leftmargin=1.cm \labelwidth=0.7cm \labelsep=0.3cm \itemindent=0.cm}
\item A primal mesh $\mesh$ which consists in a conforming structured, possibly non uniform, rectangular grid of $\Omega$.
A generic cell of this grid is denoted by $K$, and its mass center by $\bfx_K$.
\item A set $\edges$ of all edges of the mesh, with $\edges= \edgesint \cup \edgesext$, where $\edgesint$ (resp. $\edgesext$) are the edges of $\edges$ that lie in the interior (resp. on the boundary) of the domain.
The set of edges that are orthogonal to $\bfe\ei$ is denoted by $\edgesi$, $i=1,\ 2$.
We then have $\edgesi= \edgesinti \cup \edgesexti$, where $\edgesinti$ (resp. $\edgesexti$) are the edges of $\edgesi$ that lie in the interior (resp. on the boundary) of the domain.

For $\edge\in\edgesint$, we write $\edge = K|L$ if $\edge = \partial K \cap \partial L$.
A dual cell $D_\edge$ associated to an edge $\edge \in\edges$ is defined as follows:
\begin{list}{-}{\itemsep=0.ex \topsep=0.ex \leftmargin=1.cm \labelwidth=0.7cm \labelsep=0.1cm \itemindent=0.cm}
\item if $\edge=K|L \in \edgesint$, then $D_\edge = D_{K,\edge}\cup D_{L,\edge}$, where $D_{K,\edge}$ (resp. $D_{L,\edge}$) is the half-part of $K$ (resp. $L$) adjacent to $\edge$ (see Fig. \ref{fig:mesh});
\item if $\edge \in \edgesext$ is adjacent to the cell $K$, then $D_\edge=D_{K,\edge}$.
\end{list}
For $i=1,2$, the domain $\Omega$ is split up in dual cells: $\Omega = \cup_{\edge \in \edgesi} \overline{D_\edge}$; the $i$-th grid is referred to as the $i$-th dual mesh.
The set of the edges of the $i$-th dual mesh is denoted by $\edgesd\ei$ (note that these edges may be non-orthogonal to $\bfe\ei$).
The dual edge separating two dual cells $D_\edge$ and $D_{\edge'}$ is denoted by $\edged=\edge|\edge'$.
\end{list}
\end{definition}

The discrete velocity unknowns are associated to the velocity cells and are denoted by $(u_{i,\edge})_{\edge\in\edgesi}$, $i=1,\ 2$, while the discrete scalar unknowns (water height and pressure) are associated to the primal cells and are denoted respectively by $(h_K)_{K\in\mesh}$ and $(p_K)_{K\in\mesh}$.

In order to define the scheme, we need some additional notations.
The set of edges of a primal cell $K$ and of a dual cell $D_\edge$ are denoted by $\edges(K) \subset \edges$ and $\edgesd(D_\edge)$ respectively; note that $\edgesd(D_\edge) \subset \edgesd\ei$ if $\edge \in \edgesi$.
For $\edge \in \edges$, we denote by $\bfx_\edge$ the mass center of $\edge$.
The vector $\bfn_{K,\edge}$ stands for the unit normal vector to $\edge$ outward $K$.

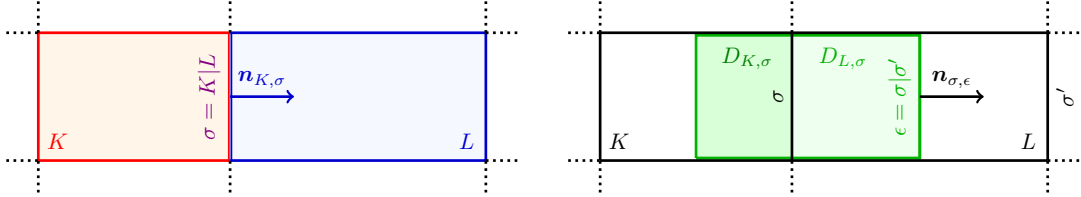
\begin{figure}[htbp]
\centering
\scalebox{0.85}{
\begin{tikzpicture}[scale=1] 
\fill[color=orangec!30!white,opacity=0.3] (0., 0.) -- (3., 0.) -- (3., 2.) -- (0., 2.) -- (0., 0.);
\fill[color=bclair,opacity=0.3] (3., 0.) -- (7., 0.) -- (7., 2.) -- (3., 2.) -- (3., 0.);

\draw[very thick, color=red] (3, 2.) -- (0., 2.) -- (0, 0.) -- (3., 0.);
\draw[thick, color=red] (2.98, 0.) -- (2.98, 2.) node[sloped, midway, left, anchor=south] {\textcolor{violet}{$\edge= K|L$}};
\node[color=red] at (0.3, 0.3){$K$};

\draw[very thick, color=bfonce] (3., 0.) -- (7., 0.) -- (7, 2.) -- (3, 2.);
\draw[thick, color=bfonce] (3.02, 0.) -- (3.02, 2.);
\node[color=bfonce] at (6.7, 0.3){$L$};

\draw[very thick, color=bfonce, ->] (3, 1.) -- (4, 1.) node[midway,above]{$\bfn_{K,\edge}$};

\draw[very thick, dotted, color=black] (-0.5, 0.) -- (0., 0.); \draw[very thick, dotted, color=black] (0., -0.5) -- (0., 0.); 
\draw[very thick, dotted, color=black] (7., 0.) -- (7.5, 0.);  \draw[very thick, dotted, color=black] (7., -0.5) -- (7., 0.); 
\draw[very thick, dotted, color=black] (-0.5, 2.) -- (0., 2.); \draw[very thick, dotted, color=black] (0., 2.) -- (0., 2.5);  
\draw[very thick, dotted, color=black] (7., 2.) -- (7.5, 2.); \draw[very thick, dotted, color=black] (7., 2.) -- (7., 2.5); 
\draw[very thick, dotted, color=black] (3., -0.5) -- (3., 0.); 
\draw[very thick, dotted, color=black] (3., 2.) -- (3., 2.5); 
\end{tikzpicture}
}
\hspace{2mm}
\scalebox{0.85}{
\begin{tikzpicture}[scale=1] 
\fill[color=green!50!white,opacity=0.3] (1.5, 0.) -- (3., 0.) -- (3., 2.) -- (1.5, 2.) -- (1.5, 0.); \node[color=vertf!70!black] at (2.3, 1.6){$D_{K,\edge}$};
\fill[color=green!20!white,opacity=0.3] (3., 0.) -- (5., 0.) -- (5., 2.) -- (3., 2.) -- (3., 0.); \node[color=vertf] at (3.8, 1.6){$D_{L,\edge}$};
\draw[very thick, color=vertf] (1.5, 0.04) -- (5., 0.04) -- (5., 1.96) -- (1.5, 1.96) -- (1.5, 0.04);
\draw[very thick, color=vertf] (5., 0.04) -- (5., 1.96) node[midway, sloped, right, anchor=south]{$\edged=\edge|\edge'$};
\draw[very thick, color=black, ->] (5., 1.) -- (6, 1.) node[midway, anchor=south]{$\bfn_{\edge,\edged}$};

\draw[very thick, color=black] (0., 0.) -- (7., 0.); \draw[very thick, color=black] (0., 2.) -- (7., 2.);
\draw[very thick, color=black] (0., 0.) -- (0., 2.);
\draw[very thick, color=black] (3., 0.) -- (3., 2.) node[midway, sloped, left, anchor=south]{$\edge$};
\draw[very thick, color=black] (7., 0.) -- (7., 2.) node[midway, sloped, right, anchor=north]{$\edge'$};

\draw[very thick, dotted, color=black] (-0.5, 0.) -- (0., 0.); \draw[very thick, dotted, color=black] (0., -0.5) -- (0., 0.); 
\draw[very thick, dotted, color=black] (7., 0.) -- (7.5, 0.);  \draw[very thick, dotted, color=black] (7., -0.5) -- (7., 0.); 
\draw[very thick, dotted, color=black] (-0.5, 2.) -- (0., 2.); \draw[very thick, dotted, color=black] (0., 2.) -- (0., 2.5);  
\draw[very thick, dotted, color=black] (7., 2.) -- (7.5, 2.); \draw[very thick, dotted, color=black] (7., 2.) -- (7., 2.5); 
\draw[very thick, dotted, color=black] (3., -0.5) -- (3., 0.); 
\draw[very thick, dotted, color=black] (3., 2.) -- (3., 2.5); 
\node[color=black] at (0.3, 0.3){$K$}; \node[color=black] at (6.7, 0.3){$L$};
\end{tikzpicture}
}
\caption{Notations for the primal and dual meshes -- Left: primal mesh -- Right: dual mesh for the first component of the velocity.} \label{fig:mesh}
\end{figure}

The size $\delta_\mesh$ of the mesh and its regularity $\theta_\mesh$ are defined by:
\begin{equation}  \label{regmesh}
\delta_\mesh=\max_{K\in\mesh }  \diam(K), \mbox{ and }
\theta_\mesh = \max_{K \in \mesh} \frac{\mathrm{diam}(K)^2}{|K|},
\end{equation}
where, here and in the following, $|\cdot|$ stands for the one or two dimensional measure of a subset of $\xR^2$.
Note that $\theta_\mesh$ is controlled by the maximum value taken by the ratio $|\edge|/|\edge'|$, with $\edge$ and $\edge'$ two edges of a same cell (this maximum is of course obtained when $\edge$ and $\edge'$ are not normal to the same vector of the canonical basis of $\xR^2$).
Since the grid is rectangular, a power (equal to 2 if $\Omega$ is a rectangle) of this maximum ratio in turns controls the maximum value of $|K|/|L|$, with $K$ and $L$ two cells of the mesh; so supposing that $\theta_\mesh$ is bounded is equivalent to formulate a quasi-uniformity condition for the mesh.

\begin{remark}[One-dimensional case]
In the one-dimensional case, these arguments do not hold.
For the proofs of consistency of the scheme presented in the following, we would need to use the additional regularity parameter:
\[
\theta'_\mesh = \max \bigl\{ \frac{|K|}{|L|},\ (K, L) \in \mesh^2,\ K \mbox{ and } L \mbox{ adjacent} \bigr\}.
\]
\end{remark}

For the sake of simplicity, we consider a uniform discretisation $0 = t_0 < t_1 < \cdots < t_N = T$ of the time interval $(0,T)$, and denote the (constant) time step by $\delta t = t_{n+1} - t_n$ for $n = 0, 1, \cdots, N - 1$.
%
%
\subsection{The segregated forward Euler scheme}   \label{sec:Euler-sch}

We first present a first order segregated discretisation in time and MAC discretisation in space of the system \eqref{eq:sw}, with a MUSCL-like technique for the computation of the numerical fluxes, see \cite{pia-13-for}; the scheme is written in compact form as follows:
\begin{subequations}\label{euler:scheme}
\begin{align} \nonumber &
\mbox{{\bf Initialisation}:}
\\ \label{sch-init_h} & \qquad
h^0_K = \frac 1 {|K|} \int_K h_0(\bfx) \dx, \quad p^0_K = \frac 1 2 g\, (h^0_K)^2,\quad \forall K \in \mesh,
\\[1ex] \label{sch-init_u} & \qquad
u^0_{i,\edge} = \frac 1 {|D_\edge|} \int_{D_\edge} u_{i,0} (\bfx) \dx, \quad \forall \edge \in \edgesinti,\mbox{ for } i=1,2.
\\[2ex] \nonumber &
\mbox{\bf For } 0 \leq n \leq N-1, \mbox{ solve for } h^{n+1},\ p^{n+1} \mbox{ and } \bfu^{n+1} = (u_i^{n+1})_{i=1,2}:
\\[1ex] \label{sch-mass} & \hspace{6ex}
\eth_t h_K^n +  \dive_K\,(h^n \bfu^n) = 0, \quad \forall K \in \mesh,
\\[1ex] \label{sch-pressure} & \hspace{6ex}
p_K^{n+1} = \frac 1 2 g\, (h_K^{n+1})^2, \quad \forall K \in \mesh,
\\[1ex] \label{sch-vitesse} & \hspace{6ex}
\eth_t(h u_i)_\edge^n + \dive_{D_\edge} (h^n u_i^n \bfu^n) + \eth_\edge p^{n+1} + g\, h_{\edge,c}^{n+1} \ \eth_\edge z =0, \quad  \forall \edge \in \edgesinti, \mbox{ for } i=1,2,
\end{align}
\end{subequations}
where the different discrete terms and operators introduced here are now defined.

\paragraph{\bf Discrete time derivative of the height.}
The term $\eth_t h^n_K$ is the discrete time derivative of the fluid height in the cell $K$ and over the time interval $(t_n,t_{n+1})$:
\[
\eth_t h^n_K = \frac 1 {\delta t}\ (h^{n+1}_K - h_K^n).
\]

\paragraph{\bf Discrete divergence and gradient operators of scalar unknowns.}
The discrete divergence operator on the primal mesh denoted by $\dive_K$ is defined as follows:
\begin{equation} \label{eq:div} 
\dive_K\, (h \bfu) = \frac{1}{|K|} \ \sum_{\edge\in\edges(K)} |\edge|\ \bfF_\edge \cdot \bfn_{K,\edge},
\end{equation}
where $\bfF_\edge$ is assumed to vanish on the external edges (thanks to the assumed impermeability condition) and, for an internal edge,
\[
\bfF_\edge = h_\edge  \ \bfu_\edge  \mbox{ and }\bfu_\edge = u_{i,\edge}  \ \bfe\ei   \mbox{ for } \edge \in \edges\ei,\ i=1,2.
\]
The value of $h$ at the edge, $h_\edge$, is approximated by a MUSCL-like interpolation technique \cite{pia-13-for}; in the subsequent analysis, we do not need to have an explicit formula for $h_\edge$, we only need the following conditions to be satisfied:
\begin{align} \nonumber &
\forall\ K \in \mesh, \ \forall \edge = K|L \in \edgesint(K),
\\[1ex]	\label{muscl:cons} & \qquad
- \exists\ \lambda_\edge^K \in [0, 1]:\  h_\sigma = \lambda_\edge^K\, h_K + (1-\lambda_\edge^K)\ h_L.
\\ \label{muscl:h:def}	&  \qquad
-  \exists\ \alpha_\edge^K \in [0, 1]  \mbox{ and } M_\edge^K \in \mesh:\   \ h_\edge - h_K =
\left\{ \begin{array}{rcll}
\alpha_\edge^K\ (h_K- h_{M_\edge^K})  & \text{ if } \bfu_\edge \cdot \bfn_{K,\edge} \geq 0,
\\[1ex]
\alpha_\edge^K\ (h_{M_\edge^K} - h_K) & \text{ otherwise}.
\end{array} \right.
\end{align}
By \eqref{muscl:cons}, $h_\edge$ is a convex combination of $h_K$ and $h_L$;  if $\bfu_\edge \cdot \bfn_{K,\edge} < 0$,  the cell $M_\edge^K$ in \eqref{muscl:h:def} can be chosen as $L$ and $\alpha_\edge^K$ as $1 - \lambda_\edge^K$. 
In the case of a discrete divergence free velocity field $\bfu$, this assumption ensures that $h_K^{n+1}$ is a convex combination of the values $h_K^n$  and $(h_M^n)_{M \in \mathcal N_\edge^K}$, where $\mathcal N_\edge^K$ denotes the set of cells $M_\edge^K$ satisfying \eqref{muscl:h:def}, see \cite[Lemma 3.1]{pia-13-for}, for any structured or unstructured mesh.

In practice, there are several ways to choose the value $h_\edge$ so as to satisfy the conditions \eqref{muscl:cons}-\eqref{muscl:h:def}.
For instance in one space dimension and for a uniform mesh (or for uniform Cartesian meshes), if $\edge' = J|K$, $\edge = K|L$ and $\edge'$ and $\edge$ are opposite edges of $K$, with $\bfu_\edge \cdot \bfn_{K,\edge} \ge 0$, the cell $M_\edge^K$ in Relation \eqref{muscl:h:def} can be chosen as the cell $J$ and the value $h_\edge$ may be computed using the following classical Van Leer limitation procedure \cite{van-77-tow}:
\begin{multline*}
h_\edge - h_K = \frac 1 2\ \mathrm{minmod}\ \big(\frac 1 2\, (h_L - h_J),\ \zeta^+\, (h_L -h_K),\ \zeta^{-}\,  (h_K -h_J) \big),
\\
\text{with  } \mathrm{minmod} (a, b, c) =
\begin{cases} \mathrm{sgn}(a) \min(|a|,|b|,|c|) \mbox{ if } a, b, c \mbox{ have the same sign,} \\  0 \mbox{ otherwise.} \end{cases}
\end{multline*}
where the limitation parameters $\zeta^{+}$ and $\zeta^{-}$ are such that $\zeta^{+}, \zeta^{-} \in [0, 2]$.
Observe that, for the discretisation to be (quasi) second order, these parameters must be such that $\zeta \ge 1 - \varepsilon(\delta_\mesh)$ with $\varepsilon(\delta_\mesh) \to 0$ as $\delta_\mesh \to 0$; if $\zeta^{+}=\zeta^{-} = 1$, the two-slopes $\mathrm{minmod}$ limiter ($\mathrm{minmod} (h_L -h_K,  h_K -h_J )$) is recovered.

The numerical tests which are presented in Section \ref{sec:num} below are performed with the freeware code CALIF$^3$S implementation which is designed for any kind of mesh, Cartesian or structured, 2D and 3D, see \cite{califs} for more details;
for Cartesian meshes, the choice of $h_\edge$ with $\edge=K|L$ and $K$ the upwind cell to $\edge$ is the following:
\begin{equation} \label{implementation-calif}
\begin{array}{ll}
i)
&
\mbox{Choose an affine interpolation  }\tilde h_\edge \mbox{ at the mass center of the interface, using the lowest}
\\ &
\mbox{possible number of neighbours of } K,
\\[1ex]
ii)
\quad  & \displaystyle
\mbox{Compute } h_\edge - h_K = \frac 1 2 \mathrm{minmod}\ \bigl(2(\tilde h_\edge -h_K),\ 2(h_K-h_J)\bigr),
\end{array}
\end{equation}
where $J$ is a suitable neighbour of $K$, chosen as the opposite cell to $\edge$ with respect to $K$ for quadrilateral or hexahedric cells.

A local discrete derivative applied to a discrete scalar field $\xi$ (with $\xi = p, h$ or $z$) is defined by:
\begin{equation} \label{eq:grad}
\eth_\edge \xi = \frac{|\edge|}{|D_\edge|}\ (\xi_L - \xi_K) \, \text{ for } \edge = K|L \in \edgesinti, \mbox{ with } (\bfx_K)_i < (\bfx_L)_i, \mbox{ for } i=1,2.
\end{equation}
The above defined discrete divergence and discrete derivatives satisfy the following div-grad duality relationship \cite[Lemma 2.4]{gal-18-conv}:
\begin{equation}\label{eq:div-grad}
\sum_{K\in\mesh} |K|\ \xi_K\ \dive_K(h \bfu) + \sum^2_{i=1}\ \sum_{\edge \in \edgesinti} |D_\edge|\ h_\edge\ u_{i,\edge}\ \eth_\edge \xi = 0.
\end{equation}


\paragraph{\bf Discrete water height for the bathymetry term.}
In equation \eqref{sch-vitesse} the term $\eth_\edge z$ denotes the discrete derivative (in the sense of \eqref{eq:grad}) of the piecewise constant function $z_\mesh = \sum_{K\in \mesh} z(\bfx_K) \characteristic_K$ (with $\characteristic_K(\bfx)=1$ if $\bfx \in K$ and 0 otherwise), that is:
\begin{equation} \label{gradz}
\eth_\edge z = \frac{|\edge|}{|D_\edge|}\ (z(\bfx_L) - z(\bfx_K)) \, \text{ for } \edge = K|L \in \edgesinti, \mbox{ with } (\bfx_K)_i < (\bfx_L)_i, \mbox{ for } i=1,2.
\end{equation}
The value $h_{\edge,c}$ of the water height is defined so as to satisfy:
\begin{equation}\label{p-lake}
\eth_\edge p + g\, h_{\edge,c} \ \eth_\edge z  = 0 \mbox{ if } \eth_\edge(h+z)=0, \mbox{ for }i =1, 2.
\end{equation}
This requirement is fulfilled if $h_{\edge,c}$ is centered, {\it i.e.} if $h_{\edge,c}$ is defined by:
\begin{equation} \label{hedge}
h_{\edge,c} = \frac 1 2\ (h_K+h_L), \text{ for } \edge = K|L \in \edgesint.
\end{equation}
Indeed, if $h_{\edge,c}$ is defined by \eqref{hedge}, since $p_K = \frac 1 2 g h_K^2$ for $K\in\mesh$, one has from the definition of the discrete gradient \eqref{eq:grad}, for $\edge= K|L$,
\[
\eth_\edge p + g\, h_{\edge,c} \ \eth_\edge z = \frac 1 2\ g\ (h_K + h_L) \eth_\edge(h+z),
\]
and therefore \eqref{p-lake} holds, so that the ``lake at rest" steady state is preserved, see Lemma \ref{lem:lake} below.


\paragraph{\bf Discrete momentum convection operator.}
The discrete time derivative $\eth_t(h u_i)_\edge^n$ in Equation \eqref{sch-vitesse} is defined, for $\edge = K|L \in \edgesinti$, by
\[
\eth_t(h u_i)_\edge^n = \frac 1 {\delta t}\ \bigl((h \ u_i)_\edge^{n+1} -(h \ u_i)_\edge^n \bigr)
\]
where
\begin{equation} \label{discrete_water_dual}
(h \ u_i)_\edge^{n+1} = h_{D_\edge}^{n+1} \ u_{i,\edge}^{n+1}, \quad \mbox{with} \quad
h_{D_\edge} = \frac 1 {|D_\edge|}\ \bigl( |D_{K,\edge}|\ h_K + |D_{L,\edge}|\ h_L \bigr).
\end{equation}
The discrete divergence operator on the dual mesh $\dive_{D_\edge}$ is given by:
\begin{equation} \label{eq:conv}
\dive_{D_\edge}(h u_i \bfu) = \frac{1}{|D_\edge|}\ \sum_{\edged \in \edgesd(D_\edge)} |\edged|\ \bfG_{\edged} \cdot \bfn_{\edge,\edged}, \text{ with } \bfG_{\edged} = \bfF_{\edged}\ u_{i,\edged},
\end{equation}
where
\begin{list}{--}{\itemsep=0.ex \topsep=0.5ex \leftmargin=1.cm \labelwidth=0.7cm \labelsep=0.3cm \itemindent=0.cm}
\item the flux $\bfF_{\edged}$ is computed from the primal numerical mass fluxes; following \cite{her-10-kin} (see also \cite{ans-11-anl, her-18-con} for an extension to triangular or quadrangular meshes using low order non-conforming finite element), it is defined as follows (see Figure \ref{fig:convection} for the notations):
\begin{subequations} \label{discrete_dual_flux}
\begin{align} \label{dual_flux_edge_paral} &
\text{for } \edged =\edge|\edge',\ \edged \subset K,
&&
\bfF_{\edged} = \frac 1 2\ \bigl(\bfF_\edge +  \bfF_{\edge'} \bigr),
\\ \label{dual_flux_edge_perp} &
\text{for } \edged =\edge|\edge',\ \edged \not \subset K,\ \edged \subset \edgeperp \cup \edgeperp',
&&
\bfF_{\edged} = \frac 1 {|\edged|}\ \bigl(\frac 1 2 |\edgeperp|\ \bfF_{\edgeperp} + \frac 1 2 |\edgeperp'|\ \bfF_{\edgeperp'} \bigr).
\end{align}
\end{subequations}

\begin{figure}[htbp]
\centering
\begin{tikzpicture}[scale=1]
\fill[color=bclair,opacity=0.3] (1.5, 0.)--(3, 0.)--(3, 2)--(1.5, 2)--cycle;
\fill[color=orangec!30!white,opacity=0.3] (3, 0.)--(4.5, 0.)--(4.5, 2.)--(3., 2)--cycle;

\draw[very thin] (1., 0.0)--(5,0.);
\draw[very thin] (1., 2.)--(5, 2.);
\draw[very thin] (2, -0.5)--(2, 2.5);
\draw[very thin] (4, -0.5)--(4, 2.5);

\path node at (1.2, 0.5) [anchor= west]{$J$};
\path node at (2.5, 0.5) [anchor= west]{$K$};
\path node at (4., 0.5) [anchor= west]{$L$};
\path node at (2.2, 1) {$\sigma'$};
\path node at (4.2, 1) {$\sigma$};

\draw[very thick, red] (3., 0)--(3, 2.);
\path[very thick, red] node at (3.1, 1.) {$\epsilon$};

\fill[color=bclair,opacity=0.3] (6.75, 0.)--(8.25, 0.)--(8.25,1)--(6.75, 1)--cycle;
\fill[color=orangec!30!white,opacity=0.3] (6.75, 1.)--(8.25, 1.)--(8.25,2.)--(6.75, 2)--cycle;

\draw[very thin] (5.5, 0.0)--(9.5,0.);
\draw[very thin] (5.5, 2.)--(9.5, 2.);
\draw[very thin] (6., -0.5)--(6., 2.5);
\draw[very thin] (9, -0.5)--(9, 2.5);
\draw[very thin] (5.5, 1.)--(9.5, 1.);
\draw[very thin] (7.5, -0.5)--(7.5, 2.5);

\path node at (6.25, 0.5) {$K$};
\path node at (8.75, 0.5) {$L$};
\path node at (6.25, 1.5) {$M$};
\path node at (8.75, 1.5) {$N$};
\path node at (7.59, 0.5) {$\sigma$};
\path node at (7.65, 1.6) {$\sigma'$};
\path node at (6.75, 1.2) {$\tau$};
\path node at (8.3, 1.2) {$\tau'$};
\path[very thick, red] node at (7.6, 1.2) {$\epsilon$};
\draw[very thick, red] (6.75, 1)--(8.25,1.);
\end{tikzpicture}
\caption{Notations for the definition of the momentum flux on the dual mesh for the first component of the velocity - Left: $\edged \subset K$ - Right: $\edged \subset \edgeperp \cup \edgeperp'$.}
\label{fig:convection}
\end{figure}
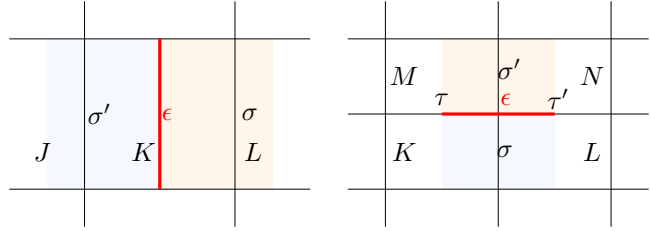

\item the value $u_{i,\edged}$ is expressed in terms of the unknowns $u_{i,\edge}$, for $\edge \in \edges\ei$, again by a second order MUSCL-like interpolation scheme; the values $u_{i,\edge}$ are thus assumed to satisfy the following property:
\begin{align} \nonumber &
\forall\ \edge \in \edgesinti,\ i =1,2,\ \forall\ \edged=\edge|\edge' \in \edgesd(D_\edge),
\\  \label{muscl:u:cons} & \quad
\begin{array}{l}
u_{i,\edged}  \mbox{ is a convex combination of } u_{i,\edge} \mbox{ and } u_{i,\edge'}:
\\[1ex] \hspace{28ex}
\exists \mu^\edge_\edged \in [0,1] :\ u_{i,\edged} = \mu^\edge_\edged\ u_{i,\edge} + (1 - \mu^\edge_\edged)\ u_{i,\edge'},
\end{array}
\\[2ex] \label{muscl:u:def}	& \quad
\begin{array}{l}
\exists \ \alpha_\edged^\edge \in [0, 1] \text{ and } \tau_\edged^\edge  \in \edgesinti :\ u_{i,\edged} - u_{i,\edge} =
\left\{\begin{array}{ll}
\alpha_\edged^\edge\ (u_{i,\edge} - u_{i,\tau_\edged^\edge}) & \text{if } \bfF_{\edged} \cdot \bfn_{\edge,\edged} \geq 0,
\\
\alpha_\edged^\edge\ (u_{i,\tau_\edged^\edge} - u_{i,\edge})  & \text{otherwise}.
\end{array} \right. \end{array}
\end{align}
Again note that in the case $\bfF_{\edged} \cdot \bfn_{\edge,\edged} <0$, the edge $\tau_\edged^\edge$ may be chosen as $\edge'$, and this is the choice made in CALIF$^3$S, the free software used in the numerical tests.
Still for the numerical tests, as for the scalar unknowns, when $\bfF_{\edged} \cdot \bfn_{\edge,\edged} \geq 0$, the choice for $\tau_\edged^\edge$ is restricted to a single dual cell, which is this opposite dual cell to $D_\edge$ with respect to $D_{\edge'}$, and the assumption that the coefficients $\mu^\edge_\edged$ and $\alpha_\edged^\edge$ lie in the interval $[0,1]$ yields the usual two-slopes minmod limiter.
\end{list}
Let us emphasize that, owing to the definitions \eqref{discrete_water_dual} and \eqref{discrete_dual_flux}, the following discrete mass balance version on the dual mesh holds:
\begin{equation} \label{mass_dual}
\dfrac{|D_\edge|}{\delta t}\ (h_{D_\edge}^{n+1} - h_{D_\edge}^n) + \sum_{\edged \in \edgesd(D_\edge)} |\edged|\ \bfF_{\edged}^n  \cdot \bfn_{\edge,\edged} = 0.
\end{equation}
%
%
\subsection{A second order in time Heun scheme} \label{sec:Heun-sch}

We retain here the quasi-second order space discretisation which we just set up, but consider now  a second order time discretisation using the Heun (or Runge-Kutta 2) scheme.
The initialization of the scheme is the same as that of the forward Euler scheme, see Equations \eqref{sch-init_h}-\eqref{sch-init_u}, but the $n$-th step now reads:
\begin{subequations}\label{heun:scheme}
\begin{align} \nonumber &
\mbox{{\bf Step} $n$ :}  \text{ For } h^n \text{ and } \bfu^n =(u_i^n)_{i=1, 2} \text{ known},
\\[2ex]	\label{heun:first_mass} & \hspace{10ex} \displaystyle
\widehat{h}_K^{n+1}  =  h_K^n - \delta t \, \dive_K (h^n  \bfu^n), && \forall K \in \mesh,
\\ \label{heun:first_mom} & \hspace{10ex} \displaystyle
\widehat{h}_{D_\edge}^{n+1}   \ \widehat{u}_{i,\edge}^{n+1}  =  h_{D_\edge}^n u_{i,\edge}^n - \delta t \, \mathcal{F}_{D_\edge} (h^n, u_i^n), && \forall \edge \in \edgesinti,\ i=1,2,
\displaybreak[1] \\[2ex] \label{heun:second_mass} & \hspace{10ex} \displaystyle
\tilde{h}_K^{n+1} =  \widehat{h}_K^{n+1}  - \delta t \, \dive_K (\widehat{h}^{n+1}  \widehat{\bfu}^{n+1} ), && \forall K \in \mesh,
\\ \label{heun:second_mom} & \hspace{10ex} \displaystyle
\tilde{h}_{D_\edge}^{n+1} \ \tilde{u}_{i,\edge}^{n+1} =  \widehat{h}_{D_\edge}^{n+1}  \widehat{u}_{i,\edge}^{n+1}  - \delta t \, \mathcal{F}_{D_\edge} (\widehat{h}^{n+1} , \widehat{u}_i^{n+1} ),
&& \forall \edge \in \edgesinti,\ i=1,2
\displaybreak[1] \\[2ex] \label{heun:last_mass} & \hspace{10ex}
h_K^{n+1} =  \frac{1 }{2}\, (h_K^n + \tilde{h}_K^{n+1} ), && \forall K \in \mesh,
\\  \label{heun:last_mom} & \hspace{10ex}
h_{D_\edge}^{n+1} \ u_{i,\edge}^{n+1}  =  \frac{1 }{2} \big(h_{D_\edge}^n u_{i,\edge}^n + \tilde{h}_{D_\edge}^{n+1} \ \tilde{u}_{i, \edge}^{n+1}  \big ),  && \forall \edge \in \edgesinti,\ i=1,2,
\end{align}
\end{subequations}
where
\begin{equation} \label{totalflux}
\mathcal{F}_{D_\edge} (h, u_i) = \dive_{D_\edge} (h u_i \bfu) + g\, h_{\edge,c}\ ( \eth_\edge h + \eth_\edge z)
\end{equation}
and the dual cell values $\widehat{h}_{D_\edge}^{n+1}, $  $\tilde{h}_{D_\edge}^{n+1}$ and  ${h}_{D_\edge}^{n+1}$ are computed from the corresponding cell values by the analogue of the formula \eqref{discrete_water_dual}; since, in equations \eqref{heun:first_mom} and \eqref{heun:second_mom}, the convection operator is derived from the associated mass balance in the same way as in the Euler scheme, a dual mass balance of the type \eqref{mass_dual} is satisfied for these two steps.
The steps \eqref{heun:second_mass}-\eqref{heun:last_mom} of the above scheme  \eqref{heun:scheme} may be replaced by the more compact form
\begin{subequations}
\begin{align} \label{heun:equiv1} & \quad \displaystyle
\eth_t h_K^n = -\frac 1 2\ \Big(\dive_K (h^n  \bfu^n) + \dive_K (\widehat{h}^{n+1}  \widehat{\bfu}^{n+1} ) \Big), && \forall K \in \mesh,
\\[1ex] \label{heun:equiv2} & \quad \displaystyle
\eth_t (h_{D_\edge} \ u_i)^n_\edge = - \frac{1}{2}  \, \Big(\mathcal{F}_{D_\edge} (h^n, u_i^n) + \mathcal{F}_{D_\edge} (\widehat{h}^{n+1} , \widehat{u}_i^{n+1} ) \Big), && \forall \edge \in \edgesi,\ i=1,2,
\end{align}
\end{subequations}
where the dual cell values $(h_{D_\edge}^{n+1})$ are computed from the primal cells values $(h_K^{n+1})$ by the formula \eqref{discrete_water_dual}; hence, once again, a dual mass balance of the type \eqref{mass_dual} is satisfied with the half-sum of the mass fluxes used in the two terms of the momentum convection operator of Equation \eqref{heun:equiv2}.
%
%
\section{Stability of the schemes}\label{sec:stab}
 
The positivity of the water height under a CFL like condition is ensured by both the schemes \eqref{euler:scheme} and \eqref{heun:scheme}; it is a consequence of the property \eqref{muscl:h:def} of the MUSCL choice for the interface values.
Indeed, the proof of the positivity in \cite[Lemma 3.1]{pia-13-for} remains valid even if the discrete velocity field is not divergence free, as is the case here.

\begin{lemma}[Positivity of the water height]\label{lem:positivity}
Let $n \in \llbracket 0, N-1 \rrbracket$, let $(h_K^n)_{K \in \mesh} \subset \xR_+^\ast$ and $(\bfu_\edge^n)_{\edge \in \edges} \subset \xR^2$ be given, and let $h_K^{n+1}$ be computed by the forward Euler scheme, step \eqref{sch-mass}. 
Then $h_K^{n+1} > 0$, for all $K \in \mesh$ under the following CFL condition,
\begin{equation} \label{cfl:posit:euler}
2 \delta t  \displaystyle \sum_{\edge \in \edges(K)} |\edge|\ |\bfu^n_\edge \cdot \bfn_{K,\edge}| \leq \ |K|, \quad \forall K \in \mesh.
\end{equation}
If \eqref{cfl:posit:euler} is fulfilled and if furthermore
\begin{equation} \label{cfl:posit:heun}
2\delta t {\displaystyle \sum_{\edge \in \edges(K)} |\edge|\ |\widehat{\bfu}^{n+1} _\edge \cdot \bfn_{K,\edge} |}\leq   |K|, \quad \forall K \in \mesh,
\end{equation}
then $h_K^{n+1}$ computed by the Heun scheme \eqref{heun:scheme} is positive.
\end{lemma}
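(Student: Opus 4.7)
The plan is to follow the strategy of \cite[Lemma 3.1]{pia-13-for}, adapted to the case where $\bfu^n$ need not be discretely divergence-free, by expanding $h_K^{n+1}$ as an affine combination of $h_K^n$ and of the values $h_{M_\edge^K}^n$ at a few neighbouring cells and showing that, under the CFL condition, all weights are non-negative and the weight of $h_K^n$ is strictly positive.

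Concretely, for the forward Euler scheme I would first write out \eqref{sch-mass} as
\[
h_K^{n+1} = h_K^n - \frac{\delta t}{|K|} \sum_{\edge \in \edges(K)} |\edge|\, h_\edge^n \, (\bfu_\edge^n \cdot \bfn_{K,\edge}),
\]
and split $\edges(K)$ into the outflow subset $\{\edge : \bfu_\edge^n \cdot \bfn_{K,\edge} \ge 0\}$ and the inflow subset $\{\edge : \bfu_\edge^n \cdot \bfn_{K,\edge} < 0\}$. Using \eqref{muscl:h:def}, on an outflow edge $h_\edge = (1+\alpha_\edge^K)\, h_K - \alpha_\edge^K\, h_{M_\edge^K}$, and on an inflow edge $h_\edge = (1-\alpha_\edge^K)\, h_K + \alpha_\edge^K\, h_{M_\edge^K}$. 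Setting $a_\edge = |\edge|\,|\bfu_\edge^n \cdot \bfn_{K,\edge}|$ and substituting, the coefficient of each $h_{M_\edge^K}^n$ is $\alpha_\edge^K\, a_\edge\, \delta t / |K| \ge 0$, while the coefficient of $h_K^n$ is
\[
1 - \frac{\delta t}{|K|}\Big[\sum_{\text{out}} (1+\alpha_\edge^K)\, a_\edge - \sum_{\text{in}} (1-\alpha_\edge^K)\, a_\edge \Big].
\]
Since $\alpha_\edge^K \in [0,1]$, the bracket is bounded above by $2 \sum_{\text{out}} a_\edge \le 2 \sum_{\edge \in \edges(K)} a_\edge$, and \eqref{cfl:posit:euler} makes this coefficient non-negative. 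Because $h_K^n>0$ and $h_{M_\edge^K}^n>0$, and the weight of $h_K^n$ is strictly positive (the CFL bound is not attained simultaneously by the outflow subsum and the $(1+\alpha_\edge^K) \le 2$ inequality for a truly active outflow), one concludes $h_K^{n+1} > 0$.

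For the Heun scheme, the predictor step \eqref{heun:first_mass} has exactly the form of the forward Euler step applied to $(h^n,\bfu^n)$, so \eqref{cfl:posit:euler} yields $\widehat{h}_K^{n+1} > 0$ by the above argument. The corrector step \eqref{heun:second_mass} has the same form but with $(\widehat{h}^{n+1}, \widehat{\bfu}^{n+1})$ playing the role of $(h^n,\bfu^n)$, and \eqref{muscl:h:def} is used with the reconstruction applied to $\widehat{h}^{n+1}$; replaying the argument with \eqref{cfl:posit:heun} in place of \eqref{cfl:posit:euler} then yields $\tilde h_K^{n+1} > 0$. Finally, \eqref{heun:last_mass} gives $h_K^{n+1} = \frac12 (h_K^n + \tilde h_K^{n+1})$, the average of two positive quantities, hence positive. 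The only delicate point is the bookkeeping in the Euler case, which is routine once the MUSCL identity \eqref{muscl:h:def} is substituted; no genuine obstacle is expected, since the divergence-free hypothesis of \cite{pia-13-for} is used there only to simplify the coefficient of $h_K^n$, not to control its sign, and the above one-sided estimate by $\sum_{\text{out}} a_\edge$ replaces it.
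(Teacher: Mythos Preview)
Your proposal is correct and follows exactly the route the paper indicates, namely adapting \cite[Lemma~3.1]{pia-13-for} by writing $h_K^{n+1}$ as a non-negative combination of $h_K^n$ and the $h_{M_\edge^K}^n$ via \eqref{muscl:h:def}, with the Heun case handled by two successive Euler steps plus the final average. One small imprecision: the weight of $h_K^n$ can in fact vanish (e.g.\ all edges outflow, all $\alpha_\edge^K=1$, CFL tight), so strict positivity follows instead from the observation that the coefficients sum to $1-\tfrac{\delta t}{|K|}\bigl(\sum_{\text{out}}a_\edge-\sum_{\text{in}}a_\edge\bigr)\ge 1-\tfrac{\delta t}{|K|}\sum_\edge a_\edge\ge \tfrac12$, so at least one coefficient is strictly positive.
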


Secondly, thanks to the choice \eqref{hedge} for the reconstruction of the water height, the property \eqref{p-lake} holds, so that the so-called "lake at rest" steady state is preserved by both schemes.

\begin{lemma}[Steady state "lake at rest"]\label{lem:lake}
Let $n \in \llbracket 0, N-1 \rrbracket$, $C \in \xR_+$; let $(h_K^n)_{K \in \mesh} \subset \xR$ be such that $h^n_K+z_K=C$ for all $K\in \mesh$ and $(\bfu_\edge^n)_{\edge \in \edges}$ be such that $\bfu_\edge^n=0$ for all $\edge \in \edges$.
Then the solution $(h_K^{n+1})_{K \in \mesh}$, $(\bfu_\edge^{n+1})_{\edge \in \edges}$ of the forward Euler scheme \eqref{euler:scheme}  (resp. Heun scheme \eqref{heun:scheme}) satisfies $h_K^{n+1}+z=C$ for all $K\in \mesh$ and $\bfu_\edge^{n+1}=0$ for $\edge \in \edges$.
\end{lemma}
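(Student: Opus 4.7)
The plan is a direct substitution argument, propagating the assumptions $\bfu^n_\edge = 0$ and $h^n_K + z_K = C$ through each of the equations of the scheme. The key ingredients are already in hand: zero velocity kills all the convection fluxes, and the well-balancedness identity \eqref{p-lake}, which is a consequence of the centred choice \eqref{hedge} for $h_{\edge,c}$, kills the combined pressure--bathymetry term as soon as $\eth_\edge(h+z)=0$.

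First I would treat the forward Euler scheme \eqref{euler:scheme}. Because $\bfu^n=0$, every mass flux $\bfF_\edge^n = h_\edge^n \bfu_\edge^n$ vanishes, so \eqref{sch-mass} yields $h_K^{n+1} = h_K^n$, and therefore $h_K^{n+1}+z_K=C$ as well. Next, for the momentum equation \eqref{sch-vitesse}, the dual momentum fluxes $\bfG_{\edged} = \bfF_{\edged}\,u_{i,\edged}$ are zero (both because $\bfF_\edged$ is built from zero primal fluxes via \eqref{discrete_dual_flux} and because $u_{i,\edged}$ is a convex combination of zero values by \eqref{muscl:u:cons}), so $\dive_{D_\edge}(h^n u_i^n \bfu^n)=0$. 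The product $(hu_i)_\edge^n$ is also zero. Using that $h_K^{n+1}+z_K=C$ for all $K$, we have $\eth_\edge(h^{n+1}+z)=0$ by \eqref{eq:grad}, and therefore the pressure--bathymetry contribution $\eth_\edge p^{n+1} + g\,h_{\edge,c}^{n+1}\,\eth_\edge z$ vanishes by the computation already done in \eqref{p-lake}--\eqref{hedge}. Thus \eqref{sch-vitesse} gives $(hu_i)_\edge^{n+1}=0$, and assuming the water height remains positive (which in fact holds because $h^{n+1}=h^n$ and $h^n>0$), we conclude $u_{i,\edge}^{n+1}=0$.

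For the Heun scheme \eqref{heun:scheme}, I would apply exactly the same reasoning substep by substep. In the first stage \eqref{heun:first_mass}--\eqref{heun:first_mom}, the input state is $(h^n,\bfu^n)$ which satisfies the lake-at-rest assumption, so the previous paragraph gives $\widehat{h}_K^{n+1}=h_K^n$ and $\widehat{u}_{i,\edge}^{n+1}=0$; in particular $\widehat{h}_K^{n+1}+z_K=C$ and $\widehat{\bfu}^{n+1}=0$. Feeding this intermediate state into the second stage \eqref{heun:second_mass}--\eqref{heun:second_mom} and repeating the argument yields $\tilde{h}_K^{n+1}=\widehat{h}_K^{n+1}=h_K^n$ and $\tilde{u}_{i,\edge}^{n+1}=0$. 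Finally \eqref{heun:last_mass}--\eqref{heun:last_mom} give $h_K^{n+1} = \tfrac12(h_K^n+\tilde h_K^{n+1})=h_K^n$ and $h_{D_\edge}^{n+1} u_{i,\edge}^{n+1}=0$, hence $u_{i,\edge}^{n+1}=0$.

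There is no real obstacle: the argument is a clean verification. The only point deserving attention is the compatibility between the pressure-gradient discretisation and the bathymetry term, which is precisely encoded in \eqref{p-lake}; since this identity has already been established for the centred reconstruction \eqref{hedge}, the rest is bookkeeping through the two Runge--Kutta stages.
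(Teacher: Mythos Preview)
Your proposal is correct and follows exactly the approach the paper has in mind: the paper does not spell out a proof but simply invokes property \eqref{p-lake} (established just before the lemma for the centred choice \eqref{hedge}) as the reason the lake-at-rest state is preserved, and your argument is nothing more than the routine verification that, once the velocity is zero and \eqref{p-lake} kills the pressure--bathymetry pair, every flux in \eqref{euler:scheme} and in each substep of \eqref{heun:scheme} vanishes. One very minor remark: for the Heun stages the flux $\mathcal F_{D_\edge}$ in \eqref{totalflux} already groups the pressure and bathymetry as $g\,h_{\edge,c}\,\eth_\edge(h+z)$ at the \emph{current} level, so you do not even need to go through \eqref{p-lake}; the vanishing is immediate from $\eth_\edge(h+z)=0$.
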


As a consequence of the careful discretisation of the convection term, the segregated forward Euler scheme satisfies a discrete kinetic energy balance, as stated in the following lemma. 
The proof of this result is an easy adaptation of \cite[Lemma 3.2]{her-18-con}.

\begin{lemma}[Discrete kinetic energy balance, forward Euler scheme] \label{lem:disc_kin}
Let, for $n \in \llbracket 0, N \rrbracket$, $\edge\in\edgesi$ and $i=1,2$, the $i$-th part of the kinetic energy be defined by $(E_{k,i})_\edge^n=\frac 1 2\, h_{D_\edge}^n\, (u^n_{i,\edge})^2$.
The solution to the scheme \eqref{euler:scheme} satisfies the following equality, for $i=1,2$, $\edge \in \edges\ei$ and $n \in \llbracket 0, N-1 \rrbracket$:
\begin{equation} \label{disc-kinet}
(\eth_t E_{k,i})_\edge^n
+ \dfrac{1}{2\ |D_\edge|} \sum_{\edged \in \edgesd(D_\edge)} |\edged| \ (u^n_{i,\edged})^2  \  \bfF^n_{\edged} \cdot \bfn_{\edge,\edged}
+ u_{i,\edge}^{n+1}\ \eth_\edge p^{n+1}
+ g\, h_{\edge,c}^{n+1} \, u_{i,\edge}^{n+1}\ \eth_\edge z = - R_{i,\edge}^{n+1},
\end{equation}
with $(\eth_t E_{k,i})_\edge^n=\dfrac 1 {\delta t}\,\bigl((E_{k,i})_\edge^{n+1}-(E_{k,i})_\edge^n\bigr)$ and
\begin{multline*}
R_{i, \edge}^{n+1} = \dfrac 1 {2\,\delta t} \ h_{D_\edge}^{n+1} \ \big(u^{n+1}_{i,\edge} - u^n_{i,\edge}\big)^2
- \frac 1 {2\ |D_\edge|} \sum_{\edged \in \edgesd(D_\edge)} |\edged| \ \bfF_\edged^n \cdot \bfn_{\edge,\edged} \ \big(u^n_{i,\edged} - u_{i,\edge}^n \big)^2
\\
+ \frac 1 {|D_\edge|}\sum_{\edged \in \edgesd(D_\edge)} |\edged| \ \bfF_\edged^n \cdot \bfn_{\edge,\edged} \ \big(u^n_{i,\edged} - u_{i,\edge}^n \big) \big(u^{n+1}_{i,\edge} - u_{i,\edge}^n \big).
\end{multline*}
\end{lemma}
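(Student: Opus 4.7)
The proof adapts the discrete kinetic-energy machinery of \cite{her-18-con} (Lemma 3.2) to the present setting. The plan is to multiply the discrete momentum balance \eqref{sch-vitesse} by $u_{i,\edge}^{n+1}$ and to rewrite, term by term, the result in the form of the claimed balance plus the remainder $R_{i,\edge}^{n+1}$. The pressure and bathymetry contributions are linear in the unknowns, so they appear untouched in \eqref{disc-kinet}; all the work is in the time-derivative and convection terms, and the essential tool is the dual mass balance \eqref{mass_dual}.

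\textbf{Step 1 (time derivative).} Writing $h_{D_\edge}^{n+1} u_{i,\edge}^{n+1} - h_{D_\edge}^n u_{i,\edge}^n = h_{D_\edge}^{n+1}(u_{i,\edge}^{n+1}-u_{i,\edge}^n) + (h_{D_\edge}^{n+1}-h_{D_\edge}^n)\, u_{i,\edge}^n$ and applying the elementary identity $a(a-b) = \frac{1}{2}(a^2-b^2) + \frac{1}{2}(a-b)^2$ with $a=u_{i,\edge}^{n+1}$, $b=u_{i,\edge}^n$, I would obtain
\[
u_{i,\edge}^{n+1}\, \eth_t(h u_i)_\edge^n
= (\eth_t E_{k,i})_\edge^n
+ \frac{1}{2\delta t}\, h_{D_\edge}^{n+1} (u_{i,\edge}^{n+1}-u_{i,\edge}^n)^2
+ \frac{1}{2}\, \eth_t h_{D_\edge}^n \,u_{i,\edge}^n\bigl(2u_{i,\edge}^{n+1} - u_{i,\edge}^n\bigr),
\]
where $\eth_t h_{D_\edge}^n = (h_{D_\edge}^{n+1}-h_{D_\edge}^n)/\delta t$. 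The first two pieces are already in the target form; the remaining term will be absorbed in Step~3 via the mass balance.

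\textbf{Step 2 (dual mass balance).} Use \eqref{mass_dual} to substitute
\[
\eth_t h_{D_\edge}^n = -\frac{1}{|D_\edge|}\sum_{\edged \in \edgesd(D_\edge)} |\edged|\, \bfF_{\edged}^n \cdot \bfn_{\edge,\edged},
\]
so that the unwanted term produced in Step~1 becomes a sum of mass-flux contributions ready to be merged with the convection.

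\textbf{Step 3 (convection and final recombination).} Multiplying \eqref{eq:conv} by $u_{i,\edge}^{n+1}$ and combining with the flux sum produced in Step~2 leads to
\[
u_{i,\edge}^{n+1}\, \dive_{D_\edge}(h^n u_i^n \bfu^n) + \frac{1}{2}\eth_t h_{D_\edge}^n \,u_{i,\edge}^n\bigl(2u_{i,\edge}^{n+1}-u_{i,\edge}^n\bigr)
= \frac{1}{|D_\edge|}\sum_{\edged \in \edgesd(D_\edge)} |\edged|\, \bfF_{\edged}^n\cdot\bfn_{\edge,\edged}\,\Bigl[u_{i,\edge}^{n+1}(u_{i,\edged}^n-u_{i,\edge}^n) + \tfrac{1}{2}(u_{i,\edge}^n)^2\Bigr].
\]
The key algebraic identity is
\[
u_{i,\edge}^{n+1}(u_{i,\edged}^n - u_{i,\edge}^n) + \tfrac{1}{2}(u_{i,\edge}^n)^2
= \tfrac{1}{2}(u_{i,\edged}^n)^2 + (u_{i,\edged}^n - u_{i,\edge}^n)(u_{i,\edge}^{n+1} - u_{i,\edge}^n) - \tfrac{1}{2}(u_{i,\edged}^n - u_{i,\edge}^n)^2,
\]
which produces, on the one hand, the expected convective kinetic-energy flux $\frac{1}{2|D_\edge|}\sum_\edged |\edged|\,\bfF_\edged^n\cdot\bfn_{\edge,\edged}\, (u_{i,\edged}^n)^2$, and, on the other, exactly the two flux contributions appearing in $R_{i,\edge}^{n+1}$.

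I do not anticipate a conceptual obstacle: the identity is entirely algebraic and relies only on the dual-mesh mass balance \eqref{mass_dual}, which is built into the scheme by construction of the dual fluxes \eqref{discrete_dual_flux}. The one delicate point is the bookkeeping in Step~3, where the $(u_{i,\edge}^n)^2$ and $u_{i,\edge}^{n+1}u_{i,\edge}^n$ cross-terms arising from the time-derivative manipulation must cancel \emph{exactly} with those coming from the mass-flux sum; this cancellation is precisely what singles out the ``centred-in-velocity'' choice in the convection term and is the reason the construction of $\bfF_\edged$ from the primal mass fluxes is essential.
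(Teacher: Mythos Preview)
Your argument is correct and is precisely the computation the paper has in mind: the paper does not spell out a proof but simply points to \cite[Lemma 3.2]{her-18-con}, whose content is exactly the manipulation you perform (multiply \eqref{sch-vitesse} by $u_{i,\edge}^{n+1}$, use the identity $a(a-b)=\tfrac12(a^2-b^2)+\tfrac12(a-b)^2$, and invoke the dual mass balance \eqref{mass_dual} to recombine the flux terms). Your Steps~1--3 and the final algebraic identity are all accurate, so nothing needs to be changed.
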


The scheme also satisfies the following discrete potential energy balance.

\begin{lemma}[Discrete potential energy balance, forward Euler scheme] \label{lem:disc-pot}
Let the local (in space and time) discrete potential energy be defined by $(E_p)_K^n=\frac 1 2 g\,(h_K^n)^2 + g h_K^n z_K$, for $K \in \mesh$ and $n \in \llbracket 0, N \rrbracket$.
The solution to the scheme \eqref{euler:scheme} satisfies the following equality, for $K \in \mesh$ and $n \in \llbracket 0, N-1 \rrbracket$:
\begin{equation}\label{disc-pot}
(\eth_t E_p)_K^n + \dive_K(\frac 1 2 g (h^n)^2 \bfu^n ) + g\, z_K\ \dive_K(h^n \bfu^n ) +  p_K^n\ \dive_K (\bfu^n) = - r_K^{n+1},
\end{equation}
with $(\eth_t E_p)_K^n = \dfrac 1 {\delta t} \bigl((E_p)_K^{n+1} - (E_p)_K^n\bigr)$, $\displaystyle \dive_K(\frac 1 2 g (h^n)^2 \bfu^n ) = \frac 1 {|K|} \sum_{\edge \in \edges(K)} |\edge|\ \frac 1 2  g (h^n_\edge)^2\ \bfu_\edge \cdot \bfn_{K,\edge}$ and
\begin{multline}\label{residu:pot:bound}
r_K^{n+1} = \frac{1}{2\,\delta t} g (h_K^{n+1} - h_K^n)^2
- \frac g {2\,|K|}  \sum_{\edge \in \edges(K)} |\edge|\ (h^n_\edge - h_K^n)^2 \  \bfu_\edge^n \cdot \bfn_{K,\edge}
\\
+ \frac g {|K|} \sum_{\edge \in \edges(K)} |\edge|\ (h^{n+1}_K - h_K^n) \ h_\edge^n \ \bfu_\edge^n \cdot \bfn_{K,\edge}.
\end{multline}
\end{lemma}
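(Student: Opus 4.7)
The plan is to multiply the discrete mass balance \eqref{sch-mass} by the quantity $g h_K^{n+1} + g z_K$, which plays the role of the discrete analogue of the derivative of $(E_p)_K^n$ with respect to $h$, and then convert each resulting product into the desired conservative form plus a quadratic remainder by means of two elementary algebraic identities:
\[
b(b-a) = \tfrac{1}{2}(b^2 - a^2) + \tfrac{1}{2}(b - a)^2, \qquad ab = \tfrac{1}{2}(a^2 + b^2) - \tfrac{1}{2}(a - b)^2.
\]
The choice of $h_K^{n+1}$ rather than $h_K^n$ as multiplier is deliberate: it is what makes the quadratic time residual appear with the correct (positive) sign so that it can later be absorbed into a CFL-type argument.

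Applying the first identity with $a = h_K^n$, $b = h_K^{n+1}$ converts $g h_K^{n+1}\eth_t h_K^n$ into $\eth_t(\frac{1}{2}g (h_K^n)^2) + \frac{g}{2\delta t}(h_K^{n+1}-h_K^n)^2$, which produces the first contribution to $r_K^{n+1}$. Next I split the convection contribution as
\[
g h_K^{n+1}\, \dive_K(h^n \bfu^n) = g h_K^n\, \dive_K(h^n \bfu^n) + g(h_K^{n+1} - h_K^n)\, \dive_K(h^n \bfu^n);
\]
expanding $\dive_K$ in the second summand via \eqref{eq:div} and recalling $\bfF_\edge = h_\edge \bfu_\edge$ yields the third term of $r_K^{n+1}$.

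The remaining piece $g h_K^n\, \dive_K(h^n \bfu^n)$ is treated edge by edge: inserting the second algebraic identity above with $a = h_K^n$, $b = h_\edge^n$ inside the sum $\frac{g}{|K|}\sum_{\edge\in\edges(K)}|\edge|\, h_K^n h_\edge^n \bfu_\edge^n \cdot \bfn_{K,\edge}$, together with $p_K^n = \frac12 g (h_K^n)^2$, gives
\[
g h_K^n\, \dive_K(h^n \bfu^n) = \dive_K(\tfrac{1}{2} g (h^n)^2 \bfu^n) + p_K^n \, \dive_K(\bfu^n) - \frac{g}{2|K|} \sum_{\edge \in \edges(K)} |\edge|\, (h_\edge^n - h_K^n)^2 \, \bfu_\edge^n \cdot \bfn_{K,\edge},
\]
producing the second term of $r_K^{n+1}$.

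Finally, since $z_K$ is time-independent, multiplying the mass balance \eqref{sch-mass} by $g z_K$ gives immediately $\eth_t(g z_K h_K^n) + g z_K \, \dive_K(h^n \bfu^n) = 0$; adding this to the previous identity and collecting all the remainder terms delivers exactly \eqref{disc-pot} with $r_K^{n+1}$ as in \eqref{residu:pot:bound}. There is no real obstacle here — the only care required is in bookkeeping of signs and in selecting $h_K^{n+1}$ as the multiplier; everything else is a discrete analogue of the continuous manipulation that produces \eqref{pot_bal}.
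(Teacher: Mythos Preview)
Your proof is correct and follows essentially the same route as the paper. The paper invokes the general renormalisation lemma (Lemma~\ref{lem:A1}, from \cite{her-18-con}) with $\psi(x)=\tfrac{1}{2}gx^2$ and $\rho_P=h_K^{n+1}$, which amounts precisely to multiplying the mass balance by $\psi'(h_K^{n+1})=g h_K^{n+1}$; since $\psi''\equiv g$ is constant, the Taylor-type remainders in that lemma collapse to the exact quadratic identities you wrote out by hand, and the $gz_K$ step is identical in both proofs.
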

\begin{proof}
Applying \cite[Lemma A1]{her-18-con}, (re-stated in Lemma \ref{lem:A1} below for the sake of completeness), with $P=K$, 	$\psi : x \mapsto \frac 1 2 g  x^2$ , $\rho_P = h_K^{n+1}$, $\rho_P^\ast = h_K^n$, $\eta=\sigma$, $\rho_\eta^\ast = h_\edge^n$ and $V_\eta^\ast = |\sigma| \bfu_\sigma^n \cdot \bfn_{K, \sigma}$, and  $ R_K^{n+1}=|K|\ r_K^{n+1}$, we get that
\begin{multline*}
\frac g 2\, (\eth_t h^2)_K^n + \dive_K(\frac g 2 (h^n)^2 \bfu^n ) + p_K^n\ \dive_K (\bfu^n) = - \frac  g {2\delta t} (h_K^{n+1} - h_K^n)^2
\\
+ \frac 1 {|K|} \frac g 2 \sum_{\edge \in \edges(K)} |\edge|\ (h^n_\edge - h_K^n)^2 \ \bfu_\edge^n \cdot \bfn_{K,\edge}
- \frac 1 {|K|} g \sum_{\edge \in \edges(K)} |\edge|\ (h^{n+1}_K - h_K^n) \   h_\edge^n \ \bfu_\edge^n \cdot \bfn_{K,\edge}.
\end{multline*}
Then, multiplying the discrete mass balance equation \eqref{sch-mass} by $g z_K$ yields
\[
g\, (\eth_t hz)_K^n + g\, z_K\ \dive_K(h^n\ \bfu) = 0.
\]
Summing the two above equations yields \eqref{residu:pot:bound}.
\end{proof}

Since the discrete kinetic and potential energies are computed on the dual and primal meshes respectively, deriving a discrete entropy inequality is not straightforward.
In \cite{her-19-dec}, a kinetic energy inequality on the primal cells is obtained from the inequality \eqref{bc:ins} to get a discrete local entropy inequality on the primal cells.
Here, we proceed differently to show that the first order in time scheme is entropy consistent: indeed, we pass to the limit in each discrete energy inequality on its respective mesh, see Section \ref{sec:entropy} below.
%
%
\section{LW-consistency of the schemes}\label{sec:cons}

We now wish to prove the consistency of the proposed schemes in the Lax-Wendroff sense (following the seminal paper \cite{lax-60-sys}), namely to prove that if a sequence of solutions is controlled in suitable norms and converges to a limit, the limit necessarily satisfies a weak formulation of the continuous problem.

The pair of functions $(\bar h, \bar \bfu) \in \xL^1(\Omega\times[0,T))\times \xL^1(\Omega\times[0,T))^2$ is a weak solution to the continuous problem if it satisfies, for any $\varphi \in C^\infty_c \bigl(\Omega \times [0,T)\bigr)$ and $\bfvarphi \in C^\infty_c \bigl(\Omega \times [0,T)\bigr)^2$:
\begin{subequations} \begin{align}\label{weak-mass} &
\int_0^T \int_\Omega \Bigl[ \bar h \, \partial_t \varphi +\bar h\,\bar u \, \cdot \gradi \varphi \Bigr]\dx \dt
+\int_\Omega h_0(\bfx) \, \varphi(\bfx,0) \dx= 0,
\\ \label{weak-mom} &
\int_0^T \int_\Omega \Bigl[\bar h\, \bar\bfu \cdot \partial_t \bfvarphi + (\bar h \bar\bfu \otimes \bar\bfu) : \gradi \bfvarphi
+ \frac 1 2\ g \,\bar h^{2} \dive \bfvarphi + g \, \bar h \, \gradi z \cdot \bfvarphi \Bigr] \dx \dt
\\ & \hspace{6cm} \nonumber
+ \int_\Omega h_0(\bfx) \, \bfu_0(\bfx)\cdot \varphi(\bfx,0) \dx = 0.
\end{align}\label{weak-form}
\end{subequations}
A weak solution of \eqref{weak-form} is an entropy weak solution if, for any nonnegative test function $\varphi \in C^\infty_c \bigl(\Omega \times [0,T), \mathbb R_+\bigr)$:
\begin{equation}\label{eq:weakentropy}
\int_0^T \int_\Omega \Bigl[\bar E \, \partial_t \varphi + {\bf \bar \Phi} \cdot \gradi \varphi \Bigr] \dx \dt + \int_\Omega E_0(\bfx)\, \varphi(\bfx,0) \dx \geq 0,
\end{equation}
with
\[
\bar E = \frac 1 2  \bar h\,|\bar\bfu|^2 + \frac 1 2 g \bar h^2 + g \bar h z,\quad E_0 = \frac 1 2  h_0\,|\bfu_0|^2 + \frac 1 2 g h_0^2 + g h_0 z \quad \text{and } \bar \Phi =(\bar E +  \frac 1 2 g \bar h^2)\, \bar \bfu.
\]

Let $(\mesh\m,\edges\m)_\mnn$ be a sequence of meshes in the sense of Definition \ref{def:MACgrid} and let ($h\m$, $\bfu\m)_\mnn$  be the associated sequence of solutions of the scheme \eqref{euler:scheme} defined almost everywhere on $\Omega \times [0, T)$ by:
\begin{equation} \label{eq:def_unk}
\begin{array}{l} \displaystyle
u_i\m(\bfx, t) = \sum_{n=0}^{N\m-1}\ \sum_{\edge \in \edges^{(m,i)}} (u_i\m)_\edge^n\ \characteristic_{D_\edge}(\bfx)\ \characteristic_{[t_n, t_{n+1})}(t),\text{ for } i=1,\ 2,
\\[2ex] \displaystyle
h\m(\bfx, t) =  \sum^{N\m-1}_{n=0}\ \sum_{K \in \mesh\m} (h\m)_K^n\ \characteristic_K(\bfx)\ \characteristic_{[t_n, t_{n+1})}(t),
\end{array}
\end{equation}
where $\characteristic_A$ is the characteristic function of a given set $A$, that is $\characteristic_A (y) = 1 $ if $y \in A$, $\characteristic_A (y) = 0 $ otherwise, and $\edges^{(m,i)}$ stands for the set of the edges of $\mesh\m$ orthogonal to $\bfe\ei$ (in other words, this notation replaces $(\edges\m)\ei$ for short).

\textbf{Assumed estimates }- Some boundedness and compactness assumptions on the sequence of discrete solutions $\displaystyle (h\m, \ \bfu\m)_{m \in \xN}$ are needed in order to prove the LW-consistency.
Here, we assume that:
\begin{list}{--}{\itemsep=0.ex \topsep=0.5ex \leftmargin=1.cm \labelwidth=0.7cm \labelsep=0.3cm \itemindent=0.cm}
\item  the water height $h\m$ and its reciprocal $1/h\m$ are uniformly bounded in $\xL^\infty(\Omega \times (0,T))$, {\it i.e.}\ there exists  $C^h  \in \xR_+^\ast$ such that for $m \in \xN$ and $0 \leq n < N\m$:
\begin{equation}\label{bound-h}
\frac 1 {C^h} \leq (h\m)^n_K \leq C^h, \quad \forall K \in \mesh\m,
\end{equation} 
\item the velocity $\bfu\m$ is also uniformly bounded in $\xL^\infty(\Omega \times (0,T))^2$, {\it i.e.}\  there exists  $C^u  \in \xR_+^\ast$ such that
\begin{equation}\label{bound-u}
|(\bfu\m)^n_\edge| \leq C^u, \quad \forall \edge \in \edges\m.
\end{equation}
\end{list}

 \begin{theorem}[LW-consistency of the schemes] \label{theo:cons-Euler}
Let $(\mesh\m,\edges\m)_\mnn$ be a sequence of meshes and $(\delta t\m)_\mnn$ be a sequence of time steps such that $\delta_{\mesh\m}$ and $\delta t\m$ tend to zero as $m \to +\infty$~; assume that there exists $\theta >0$ such that $\theta_{\mesh\m} \le \theta$ for any $m\in \xN$ (with $\theta_{\mesh\m}$ defined by \eqref{regmesh}).

Let $(h\m,\bfu\m)_\mnn$ be the associated sequence of solutions to the scheme \eqref{euler:scheme}, and suppose that $(h\m,\bfu\m)_\mnn$ satisfies \eqref{bound-h} and \eqref{bound-u}, and converges to $(\bar h, \bar \bfu)$ in $\xL^1(\Omega \times (0,T)) \times \xL^1(\Omega \times (0,T))^2$.
Then $(\bar h, \bar \bfu)$ satisfies the weak formulation \eqref{weak-form} of the SWE.

Similarly, if  $(h\m,\bfu\m)_\mnn,  (\widehat h\m,\widehat \bfu\m)_\mnn$ are the sequences of solutions to the scheme \eqref{heun:scheme} both uniformly bounded in the sense of \eqref{bound-h} and \eqref{bound-u} and converging to $(\bar h, \bar \bfu)$ in $\xL^1(\Omega \times (0,T)) \times \xL^1(\Omega \times (0,T))^2$, then the limit $(\bar h, \bar \bfu)$ satisfies  \eqref{weak-form}.
\end{theorem}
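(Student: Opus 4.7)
The plan is to verify the hypotheses of the generalized Lax-Wendroff theorem (Theorem \ref{theo:lw}) for both schemes \eqref{euler:scheme} and \eqref{heun:scheme}, so that the proof reduces to showing that each numerical flux converges (in the weak sense against the gradient of a smooth test function) to the corresponding continuous flux of \eqref{weak-mass}-\eqref{weak-mom}. Equivalently, for each term of the conservative form of the scheme I would take $\varphi\in C^\infty_c(\Omega\times[0,T))$ (resp.\ $\bfvarphi$ for the momentum equation), multiply by $|K|\delta t\,\varphi(\bfx_K,t_n)$ on the primal mesh (resp.\ $|D_\edge|\delta t\,\bfvarphi(\bfx_\edge,t_n)\cdot\bfe\ei$ on the $i$-th dual mesh), sum over $K$ and $n$, and perform a discrete integration by parts to transfer the discrete operators onto the test function. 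The initial condition contributions come from the projections \eqref{sch-init_h}-\eqref{sch-init_u}, whose strong consistency with $h_0$ and $\bfu_0$ is standard from the regularity of $\varphi$.

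For the mass equation \eqref{weak-mass}, the discrete numerical flux is $h_\edge^n\bfu_\edge^n$, where $h_\edge^n$ is the MUSCL reconstruction. The crucial point is to replace $h_\edge^n$ by $h_K^n$ (or $h_L^n$) in the weak identity and control the error: property \eqref{muscl:cons} gives $|h_\edge^n-h_K^n|\le|h_L^n-h_K^n|$, and the $\xL^1$-strong convergence $h\m\to \bar h$ together with the uniform bound \eqref{bound-h} and mesh regularity $\theta_{\mesh\m}\le\theta$ implies that $\|h_L^n-h_K^n\|_{\xL^1}\to 0$ over adjacent cells. Combined with the $\xL^1$-convergence of $\bfu\m$ and with the duality relation \eqref{eq:div-grad}, this yields $\int_0^T\!\int_\Omega \bar h\bar\bfu\cdot\gradi\varphi\,\dx\dt$ as the limit of the convective contribution, and the discrete time derivative contributes $\int_0^T\!\int_\Omega \bar h\,\partial_t\varphi\,\dx\dt$ through summation by parts.

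For the momentum equation \eqref{weak-mom}, the argument is parallel but carried on the $i$-th dual mesh. The time-derivative term uses the dual density $h_{D_\edge}^n$ defined in \eqref{discrete_water_dual}, whose $\xL^1$-limit is $\bar h$ by the same adjacency argument; the dual-mass balance \eqref{mass_dual} guarantees that the momentum convection operator is written conservatively with the dual fluxes $\bfF_\edged^n$ of \eqref{dual_flux_edge_paral}-\eqref{dual_flux_edge_perp} weighted by $u_{i,\edged}^n$. Properties \eqref{muscl:u:cons}-\eqref{muscl:u:def} play the same role as \eqref{muscl:cons}: the MUSCL value $u_{i,\edged}^n$ lies between $u_{i,\edge}^n$ and $u_{i,\edge'}^n$, so $\xL^1$-convergence of $\bfu\m$ plus the fact that the dual fluxes are half-sums of primal fluxes supply, after duality, the convection integral $\int_0^T\!\int_\Omega (\bar h\bar\bfu\otimes\bar\bfu):\gradi\bfvarphi\,\dx\dt$. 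The pressure gradient $\eth_\edge p^{n+1}$ with $p_K=\tfrac12 g(h_K)^2$ yields, via \eqref{eq:div-grad}, the term $\tfrac12 g\bar h^{2}\dive\bfvarphi$; the bathymetry term converges to $g\bar h\,\gradi z\cdot\bfvarphi$ thanks to the smoothness of $z$ (which makes $\eth_\edge z$ a consistent approximation of $\partial_i z$) and to $h_{\edge,c}^{n+1}\to\bar h$ in $\xL^1$.

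The main obstacle is to rigorously control the MUSCL interpolations despite their potentially nonlocal stencils. The uniform bounds \eqref{bound-h}-\eqref{bound-u} and the strong $\xL^1$-convergence give compactness, but one has to quantify that the shifted quantities (i.e.\ $h_K^n$ versus $h_L^n$ for $\edge=K|L$ or for the farther cells $M_\edge^K$ appearing in \eqref{muscl:h:def}) have vanishing pairwise $\xL^1$-distance as $\delta_{\mesh\m}\to 0$; the quasi-uniformity bound $\theta_{\mesh\m}\le\theta$ is what allows such translation estimates to be turned into mesh-uniform statements, and similarly for the velocity on the dual mesh. For the Heun scheme, the same argument is applied twice, once to each of the two fractional steps \eqref{heun:equiv1}-\eqref{heun:equiv2} rewritten in conservative form; since by hypothesis both $(h\m,\bfu\m)$ and $(\widehat h\m,\widehat\bfu\m)$ converge to the same $(\bar h,\bar\bfu)$ in $\xL^1$, the half-sum structure yields again the weak formulation \eqref{weak-form} in the limit, with no new estimate required beyond those already used for the forward Euler case.
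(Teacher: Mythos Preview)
Your proposal is correct and follows essentially the same route as the paper: apply the generalized Lax--Wendroff result (Theorem~\ref{theo:lw}) on the primal mesh for the mass balance and on the dual mesh for the momentum balance, control the MUSCL flux errors via the convex-combination properties \eqref{muscl:cons} and \eqref{muscl:u:cons} together with the translation lemma (Lemma~\ref{lem:translates}) and the quasi-uniformity $\theta_{\mesh\m}\le\theta$, and then repeat the flux-consistency arguments for the two half-steps of the Heun scheme. Two minor remarks: the paper handles the pressure gradient not through the duality \eqref{eq:div-grad} but by viewing $\eth_\edge p$ as a conservative discretisation of $\dive(p\,\bfe\ei)$ and reapplying Theorem~\ref{theo:lw} (which also absorbs the time shift $p^{n+1}$ versus $p^n$), and for the Heun scheme it invokes the building blocks Lemmas~\ref{lem:time-cons}--\ref{lem:space-cons} separately rather than Theorem~\ref{theo:lw} as a whole, since the half-sum structure decouples the single time derivative from two divergence terms.
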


The proof of this theorem is the object of the following paragraphs.
It relies on some general consistency lemmas proven in \cite{gal-21-wea}, which generalize the results of \cite{gal-19-wea} to staggered meshes; for the sake of completeness, these results are recalled in the Appendix.
The proof of the consistency of the schemes is given in Section \ref{subsec:cons-euler} for the forward Euler time discretisation and in Section \ref{subsec:cons-heun} for the Heun time discretisation.

Note that, because the convergence and boundedness of the approximate solutions are assumed, no CFL condition is required in Theorem \ref{theo:cons-Euler}.
However, recall that a CFL condition is for instance already needed to show the positivity of the water height (Lemma \ref{lem:positivity}), which is assumed in the theorem.

Finally, note that the boundedness and convergence of the sequence $(\widehat h\m,\widehat \bfu\m)_\mnn$ may be proven to be a consequence of the boundedness and convergence of the sequence $(h\m, \bfu\m)_\mnn$ (and so allows to remove this convergence hypothesis from the assumptions of the theorem), under a CFL  condition which is only slightly more restrictive than the condition \eqref{cfl:posit:euler}.
This result may be found in \cite{nas-21-phd}.
%
%
\subsection{Proof of consistency of the forward Euler scheme}\label{subsec:cons-euler}

\subsubsection{Consistency, mass balance equation} \label{subsubsec:cons-mass-euler}

Under the assumptions of Theorem \ref{theo:cons-Euler}, the aim here is to prove that the limit $(\bar h, \bar \bfu)$ of the scheme \eqref{euler:scheme} satisfies the weak form of the mass equation \eqref{weak-mass}.
In order to do so, we apply the consistency result of Theorem \ref{theo:lw} in the appendix, which is a slightly weaker and simpler version (sufficient in our case) of \cite[Theorem 2.1]{gal-21-wea}; we apply it here with $U = (h,\bfu)$, $\beta(U) = h$, $\bff(U) = h \bfu$, $\mathcal P\m = \mathcal \mesh\m, \edgespart\m = \mathcal \edges\m$, and
\begin{align} \nonumber
\mathcal C\m_{\mbox{\tiny{MASS}}}(U\m) : &\quad \Omega\times(0,T) 	\to \xR,
\\ \label{eqdef:convmass} & \quad
(\bfx,t) \mapsto (\eth_t h\m)_K^n + \dive_K\,\bigl((h\m)^n (\bfu\m)^n\bigr)
\\ & \nonumber \hspace{18ex}
\mbox{for } \bfx \in K,\ K \in \mesh\m \mbox{ and } t \in [t_n,t_{n+1}),\ n \in \llbracket 0, N\m-1 \rrbracket.
\end{align}
The boundedness assumptions \eqref{bound-h}  and \eqref{bound-u} imply that \eqref{lemgen:linfbound} holds.
Furthermore, the assumption of Theorem \ref{theo:cons-Euler} that $(h\m, \bfu\m)_\mnn$ is a sequence of solutions to the scheme \eqref{euler:scheme} converging to $(\bar h, \bar \bfu)$ in $\xL^1(\Omega \times (0,T)) \times \xL^1(\Omega \times (0,T))^2$ implies that \eqref{lemgen:l1conv} holds.

By the initialisation \eqref{sch-init_h}-\eqref{sch-init_u} of the scheme, it is clear that
\[
\sum_{K \in \mathcal \mesh\m} \int_K \bigl| (h\m)_K^0  - h_0(\bfx) \bigr| \dx \to 0 \quad \mbox{as } m \mbox{ tends to } +\infty,
\]
so that the assumption \eqref{hyp:condi} is satisfied.

From the definition \eqref{eq:def_unk} of the discrete unknowns, for any $n \in \llbracket 0, N\m -1\rrbracket$ and $K \in \mesh\m$, we have $\beta(U\m(\bfx,t)) = h_K^n$ for any $(\bfx,t) \in K\times[t_n, t_{n+1})$.
Furthermore, from the definition \eqref{eqdef:convmass} of the convection operator in the discrete mass balance equation, with the notations of the appendix, $(\beta\m)_K^n =h_K^n$.
Hence,
\[
\sum_{n=0}^{N\m-1}  \sum_{K \in \mathcal \mesh\m}\ \int_{t_n}^{t_{n+1}} \int_K \bigl |(h\m)_K^n  - h\m(\bfx,t) \bigr| \dx \dt =0,
\]
and the assumption \eqref{hyp:t} is also clearly satisfied.
Now, in the expression \eqref{eqdef:convmass} of the convection operator, the discrete flux through an edge $\edge$ reads $(\bfF\m)_\edge^n = h_\edge^n \bfu_\edge^n$. and, because the velocity components are piecewise constant on different grids, over a cell $K\in\mesh$ and the time interval $[t_n, t_{n+1})$, $n \in \llbracket 0,  N\m-1 \rrbracket$,
\begin{align*}	&
\bff(U^m(\bfx,t)) = (f_1(U^m(\bfx,t)),f_2(U^m(\bfx,t))), \mbox{ where }
\\ & \qquad
f_i(U^m(\bfx,t)) =
\begin{cases} h_K^n u_{i,\edge}^n  \mbox{ if }\bfx \in D_{K,\edge}  \\[1ex]
              h_K^n u_{i,\edge'}^n \mbox{ if } \bfx \in D_{K,\edge'},
\end{cases}
\mbox{ with } \edge\mbox{ and } \edge' \mbox{ the edges of } K \mbox{ perpendicular to } \bfe\ei.
\end{align*}
For $\bfx \in K$, $\edge=K|L$ and $t \in [t_n, t_{n+1})$, since $h_\edge^n$ is defined as a convex combination of $h_K^n$ and $h_L^n$,
\begin{align*}
\Bigl| \Bigl((\bfF\m)_\edge^n - \bff(U^m(\bfx,t)\Bigr)\cdot \bfn_{K,\edge} \Bigr|
&
= \Bigl| \Big(h_\edge^n  \bfu_\edge^n -  h_K^n  \bfu_\edge^n +  h_K^n  \bfu_\edge^n  -  h_K^n \bfu(\bfx,t) \Big) \cdot \bfn_{K,\edge} \Bigr|
\\	&
\le C^u \ | h_K^n - h_L^n | + C^h \ |\bfu_\edge^n - \bfu_{\edge'}^n|.
\end{align*}
We thus have
\[
\sum_{n=0}^{N\m-1}  \sum_{K \in \mesh\m}  \frac{\mathrm{diam}(K)}{|K|} \int_{t_n}^{t_{n+1}} \int_K
\sum_{\edge \in \edges(K)} |\edge|\ \Bigl| \Bigl((\bfF\m)_\edge^n - \bff(U^m(\bfx,t)\Bigr)\cdot \bfn_{K,\edge} \Bigr| \dx \dt \leq
R_1\m + R_2\m,
\]
with
\begin{eqnarray*} &&
R_1\m = C^u \sum_{n=0}^{N\m-1}  \delta t \sum_{K \in \mesh\m} \mathrm{diam}(K) \sum_{\edge \in \edges(K),\ \edge=K|L} |\edge|\ | h_K^n - h_L^n |,
\\ &&
R_2\m = C^h \sum_{n=0}^{N\m-1}  \delta t \sum_{K \in \mesh\m} \mathrm{diam}(K) \sum_{\edge \in \edges(K)} |\edge|\ |\bfu_\edge^n - \bfu_{\edge'(\edge,K)}^n|,
\end{eqnarray*}
where $\edge'(\edge,K)$ stands for the edge of $K$ opposite to $\edge$.
The proof that these remainder terms tend to zero relies on Lemma \ref{lem:translates} of the appendix, which states the convergence to zero of such translations applied to a convergent sequence of functions in $\xL^1$.
The application of this result only requires to collect the jumps involving an unknown (here $h_K^n$ or $u_\edge^n$) and to check that the sum of the associated weights is bounded by the measure of "the support of the unknown", \ie\ the part of the domain where the discrete function takes the value of the unknown (so, here, $K$ and $D_\edge$).
For a given cell $K\in\mesh\m$, $h_K^n$ appears twice the number of the edges of a cell in $R_1\m$, and the total weight is
\[
\omega_K^n = C^u\ \sum_{\edge \in \edges(K),\ \edge=K|L} \bigl( \mathrm{diam}(K) + \mathrm{diam}(L) \bigr)\ |\edge|.
\]
This quantity is bounded by $C\ |K|$, where the real number $C$ only depends on the parameter $\theta$ measuring the regularity of the mesh.
For a velocity unknown at the edge $\edge=K|L$, we get:
\[
\omega_\edge^n = 2\,C^h\ (\mathrm{diam}(K) + \mathrm{diam}(L))\ |\edge|
\]
which is once again bounded by $ C\ |D_\edge|$ with $C$ independent of the mesh thanks to the regularity assumption.
Hence, the assumption \eqref{hyp:x} of Theorem \ref{theo:lw} is also satisfied, and we get
\begin{multline*}
\forall \varphi \in C^\infty_c \bigl(\Omega \times [0,T)\bigr), \qquad \int_0^T \int_\Omega \mathcal C\m_{\mbox{\tiny{MASS}}}(U\m)\ \varphi (\bfx,t) \dx \dt \quad \to
\\
-\int_\Omega h_0(\bfx) \, \varphi(\bfx,0) \dx -\int_0^T \int_\Omega \Bigl[ \bar h(\bfx,t) \, \partial_t \varphi(\bfx,t) + \bar h(\bfx,t)\, \bar\bfu(\bfx,t) \, \cdot \gradi \varphi(\bfx,t) \Bigr] \dx \dt
\quad \mbox{ as } m \to + \infty.
\end{multline*}
Therefore, we conclude that the limit $(\bar h, \bar \bfu)$ of the approximate solutions defined by the forward Euler scheme \eqref{euler:scheme} satisfies the weak form \eqref{weak-mass} of the mass balance equation.
%
%
\subsubsection{Consistency, momentum equation} \label{subsubsec:cons-mom-euler}

Let $\bfvarphi = (\varphi_1, \varphi_2) \in (C_{c}^{\infty}(\Omega \times [0,T)))^2$ be a test function.
Multiplying Equation \eqref{sch-vitesse} by $\varphi_i(\bfx,t)$, integrating over $D_\edge\times(t_n,t_{n+1})$ and summing the result over $\edge \in \edges^{(m,i)}$ and over $n \in \llbracket 0, N\m-1 \rrbracket$ yields:
\begin{equation} \label{sumQ}
Q_{1,i}\m + Q_{2,i}\m + Q_{3,i}\m + Q_{4,i}\m = 0,\quad i=1,2,
\end{equation}
with (dropping the superscripts $(m)$ in the summations for the sake of simplicity)
\begin{align}  \label{eq:q1} &
Q_{1,i}\m = \sum_{n=0}^{N\m-1} \sum_{\edge \in \edges^{(m,i)}} \int_{t_n}^{t_{n+1}} \int_{D_\edge} \ \eth_t(h u_i)_\edge^n\ \ \varphi_i(\bfx,t) \dx \dt,
\\ \label{eq:q2} &
Q_{2,i}\m = \sum_{n=0}^{N\m-1} \sum_{\edge \in \edges^{(m,i)}} \int_{t_n}^{t_{n+1}} \int_{D_\edge} \ \dive_{D_\edge} (h^n \bfu^n u_i^n)\ \varphi_i(\bfx,t) \dx \dt,
\\  \label{eq:q3} &
Q_{3,i}\m = \sum_{n=0}^{N\m-1} \sum_{\edge \in \edges^{(m,i)}} \int_{t_n}^{t_{n+1}} \int_{D_\edge}\ \eth_\edge p^{n+1}\ \varphi_i(\bfx,t) \dx \dt,
\\ \label{eq:q4} &
Q_{4,i}\m = \sum_{n=0}^{N\m-1} \sum_{\edge \in \edges^{(m,i)}} \int_{t_n}^{t_{n+1}} \int_{D_\edge} g\ h_{\edge,c}^{n+1}\ \ \varphi_i(\bfx,t) \dx \dt.
\end{align}
%
%

\paragraph{\bf The nonlinear convection operator.}
In order to study the limit of the discrete non linear convection operator defined by $Q_i\m = Q_{1,i}\m + Q_{2,i}\m$, we apply Theorem \ref{theo:lw} with $U = (h,\bfu)$, $\beta(U) = h u_i$, $\bff(U) = h u_i \bfu $, with $\mathcal P\m$ the set of dual cells associated with $u_i$ (that is with the cells corresponding to the vertical edges for $i=1$ and the horizontal edges for $i = 2$), with $\edgespart = \edgesd^{(m,i)}$ (\ie\ the dual edges associated to the $i$-th dual mesh of $\mesh\m$) and with the dual fluxes $(\bfG)_\edged^n$  defined by \eqref{discrete_dual_flux}.
The discrete non linear convection operator thus reads
\begin{align*}
[\mathcal C\m_{\mbox{\tiny{MOM}}}(U\m)]_i :
& \quad
\Omega\times(0,T) \to \xR,
\\ & \quad
(\bfx,t) \mapsto (\eth_t h u_i)_\edge^n -\ \dive_{D_\edge}\,(h^n\, u_i\, \bfu^n)
\\ & \hspace{13ex}
\mbox{ for } \bfx \in D_\edge,\ \edge \in \edges^{(m,i)} \mbox{ and } t \in (t_n,t_{n+1}),\ n \in \llbracket 0, N\m -1 \rrbracket
\end{align*}
(again dropping the superscripts $\m$ for the sake of simplicity).

Let us first check the assumption \eqref{hyp:condi}.
Since the initial condition and its discrete approximation defined by the initialisation of the scheme \eqref{sch-init_h}-\eqref{sch-init_u} are bounded in $\xL^\infty(\Omega)$, using the identity $2(ab-cd)=(a-c)(b+d)+(a+c)(b-d)$, we have
\begin{multline*} \hspace{5ex}
\sum_{\edge \in \edgesint^{(m,i)}}\ \int_{D_\edge} \bigl| (h\m\bfu\m)_{i,\edge}^0  -  h_0(\bfx)\, u_{i,0}(\bfx) \bigr| \dx
\\
\leq C \sum_{\edge \in \edgesint^{(m,i)}}\ \int_{D_\edge} \Bigr( \bigl| (h\m)^0  -  h_0(\bfx) \bigr| + \bigl| (\bfu\m)_{i,\edge}^0  -  u_{i,0}(\bfx) \bigr| \Bigl)\dx = T\m,
\hspace{5ex} \end{multline*}
with $C$ independent of $m$.
For a given function $\psi \in \xL^1(\Omega)$, and any subset $A$ of $\Omega$, let us denote by $\langle\psi\rangle_A$ the mean value of $\psi$ on $A$.
By the initialisation of the scheme, we have:
\begin{multline*} \hspace{5ex}
T\m = C \sum_{\substack{\edge \in \edgesint^{(m,i)},\\[0.5ex] \edge=K|L}}\ \Bigl( \int_{D_{K,\edge}} \Bigr( \bigl| <h_0>_K  -  h_0(\bfx) \bigr| \dx + \int_{D_{L,\edge}} \Bigr( \bigl| <h_0>_L  -  h_0(\bfx) \bigr| \dx
\\
+ \int_{D_\edge} \bigl| <u_{i,0}>_{D_\edge}  -  u_{i,0}(\bfx) \bigr| \Bigl)\dx \Bigr).
\hspace{5ex} \end{multline*}
Reordering the sum, we get
\[
T\m \leq C \sum_{K \in \mesh\m} \int_K \bigl| <h_0>_K  -  h_0(\bfx) \bigr| \dx
+ C \sum_{\edge \in \edgesint^{(m,i)}} \int_{D_\edge} \bigl| <u_{i,0}>_{D_\edge}  -  u_{i,0}(\bfx) \bigr| \Bigl)\dx,
\]
and $T\m$ tends to zero when $m$ tends to $+\infty$ by standard arguments.

For the time derivative term, thanks to definition \eqref{discrete_water_dual} of the edge water height as a weighted average of the water height in the adjacent cells, we have
\[
\sum_{n=0}^{N\m-1} \sum_{\edge \in \edges^{(m,i)}}\ \int_{t_n}^{t_{n+1}} \int_{D_\edge} \bigl| (h\bfu)_{i,\edge}^n - h(\bfx,t)\, u_i(\bfx,t) \bigr| \dx \dt=0, \quad i = 1, 2.
\]
so that the assumption\eqref{hyp:t} is satisfied.

In order to show that the assumption \eqref{hyp:x} is satisfied, we need to show that
\begin{multline} \label{eq:cons-qdm}
R\m=\sum_{n=0}^{N\m-1}  \sum_{\edge \in \edges^{(m,i)}} \  \frac {\mathrm{diam} (D_\edge)}{|D_\edge|}\\ \int_{t_n}^{t_{n+1}} \int_{D_\edge}
\sum_{\edged \in \edgesd(D_\edge)}\  |\edged|\ \Big| \bigl((\bfG\m)_{\edged}^n - h(\bfx,t)\, u_i(\bfx,t)\, \bfu(\bfx,t) \bigr)\cdot \bfn_{\edge,\edged}  \Bigr| \dx \dt \to 0
\mbox{ as } m \to + \infty.
\end{multline}
Let us estimate, for any $\edged \in \edgesd^{(m,i)}$, $n \in \llbracket 0, N\m -1\rrbracket$ and $\bfx \in D_\edge$, with $D_\edge$ a dual cell adjacent to $\edged$, the quantity $Y_\edged^n$ defined by:
\[
Y_\edged^n(\bfx) = \Big|\bigl( (\bfG\m)_\edged^n - h(\bfx,t)\, u_i(\bfx,t)\, \bfu(\bfx,t) \bigr) \cdot \bfn_{\edge,\edged}\Big|.
\]
Let $K$ and $L$ be the (primal) cells such that $\edge = K|L$.
\begin{list}{--}{\itemsep=0.ex \topsep=0.5ex \leftmargin=1.cm \labelwidth=0.7cm \labelsep=0.3cm \itemindent=0.cm}
\item If the edge $\edged$ is parallel to $\edge$, $\edged=\edge|\edge' \subset K$, then $(\bfG\m)_{\edged}^n$ is defined by \eqref{dual_flux_edge_paral}.
By the triangle inequality and thanks to the assumptions \eqref{muscl:cons}, \eqref{muscl:u:cons}, \eqref{bound-h} and \eqref{bound-u}, we get that
\begin{equation} \label{eq:bound_y1}
Y_\edged^n(\bfx) \le \frac 1 2 (C^u)^2 |h_K -h_L| + \frac 1 2 (C^u)^2 |h_K - h_J| + C^h C^u |u_{\edge,i} - u_{\edge',i}|,\quad \forall \bfx \in D_\edge,
\end{equation}
where  $J$ is the (primal) cell such that $\edge' = J|K$, see Figure \ref{fig:convection}, left.
\item If $\edged$ is orthogonal to $\edge$, $\edged=\edge|\edge'$, then $(\bfG\m)_{\edged}^n$ is defined by  \eqref{dual_flux_edge_perp}.
The computation of the quantity $Y_\edged^n(\bfx)$ is cumbersome, and we only give here an example, on Figure \ref{fig:cons_conv}.
The important point is that, $\forall \bfx \in D_\edge$, $Y_\edged^n(\bfx)$ is bounded by an expression of the form:
\begin{equation} \label{eq:bound_y2}
Y_\edged^n(\bfx) \le C\ \Bigl[ (C^u)^2 \sum_{M,N \in \mathcal N_\edged^2} |h_N^n -h_M^n|
+ C^h C^u \sum_{\eta,\eta' \in \mathcal N_\edge} |u_{\eta,3-i} - u_{\eta',3-i}|
+ C^h C^u |u_{\edge,i} - u_{\edge',i}| \Bigr],
\end{equation}
where $\mathcal N_\edged \subset \mesh\m$ is the set of the four primal cells adjacent to $\edged$, $\mathcal N_\edge \subset (\edges\m)^{(3-i)}$ is the set of the four edges orthogonal to $\edge$ and sharing a vertex with $\edge$ and $C$ is a given real number.
\end{list}
From Expression \eqref{eq:cons-qdm}, we get that
\[
|R\m| \leq \sum_{n=0}^{N\m-1} \sum_{\edge \in \edges^{(m,i)}}\ \delta t\ \mathrm{diam}(D_\edge) \sum_{\edged \in \edgesd(D_\edge)}\  |\edged| \ M_{\edge,\edged}^n,
\]
where $M_{\edge,\edged}^n$ is an upper bound for $Y_\edged^n(\bfx)$ over $D_\edge$.
We have seen that $\theta\m \le \theta$ implies a quasi-uniformity condition for the sequence of meshes, and thus the product $\mathrm{diam}(D_\edge) |\edged|$ is controlled by $C(\theta)\ |K|$ for any edge $\edge$, dual edge $\edged$ and cell $K$ of any mesh of the sequence, with $C(\theta)$ only depending on $\theta$; similarly, the product $\mathrm{diam}(D_\edge) |\edged|$ is controlled by $C(\theta)\ |D_{\edge'}|$, for any edges $\edge$ and $\edge'$ and dual edge $\edged$ of any mesh of the sequence.
Using the estimates \eqref{eq:bound_y1} and \eqref{eq:bound_y2}, we get a bound for $|R\m|$ as a collection of jumps of the height and the velocity, where each unknown appears only a bounded number of times; as for the mass balance equation, the total weight obtained by gathering the jumps involving a given unknown is thus bounded be the measure of the support of this unknown, multiplied by a real number only depending on $\theta$.
In addition, these jumps involve cells and edges in a stencil of bounded width (with respect to $m$).
We are thus in position to apply Lemma \ref{lem:translates} to obtain that $R\m$ tends to zero.
Hence, owing to Theorem \ref{theo:lw}, we get that
 \begin{multline} \label{limq1}
Q_i\m = Q_{1,i}\m + Q_{2,i}\m =	\int_0^T \int_\Omega \mathcal  C\m_{\mbox{\tiny{MOM}}}(U\m)\ \varphi_i (\bfx,t) \dx \dt \to
\\
-\int_0^T \int_\Omega \Bigl[ \bar h(\bfx,t)\ \bar u_i(\bfx,t) \ \partial_t \varphi(\bfx,t) + \bar h(\bfx,t)\ \bar u_i(\bfx,t)\ \bar \bfu(\bfx,t) \cdot \gradi \varphi_i(\bfx,t) \Bigr] \dx \dt
\\
- \int_\Omega h_0(\bfx)\ u_{i,0}(\bfx) \ \varphi_i(\bfx,0) \dx \mbox{ as } m \to + \infty.
\end{multline}

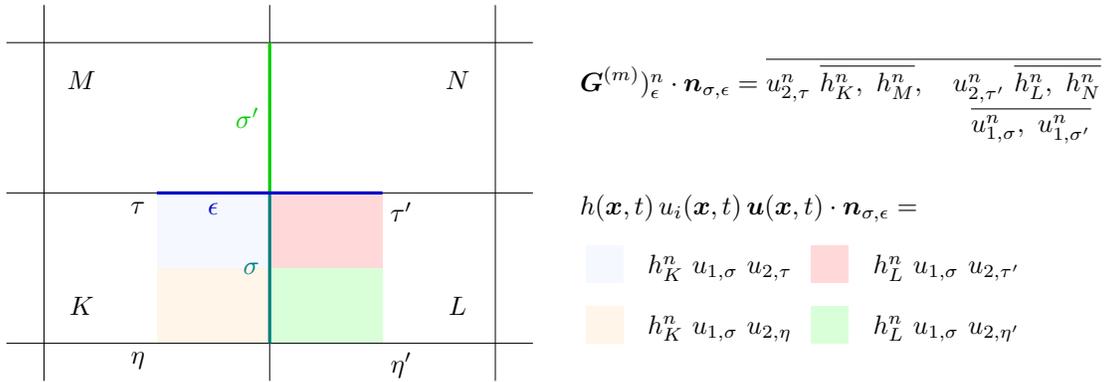
\begin{figure}[htbp]
\centering
\begin{tikzpicture}[scale=1]
\fill[color=orangec!30!white,opacity=0.3] (1.5, 0.)--(3., 0.)--(3.,1.)--(1.5, 1.)--cycle;
\fill[color=bclair,opacity=0.3] (1.5, 1.)--(3., 1.)--(3.,2.)--(1.5, 2.)--cycle;
\fill[color=green!50!white,opacity=0.3] (3, 0.)--(4.5, 0.)--(4.5,1.)--(3., 1.)--cycle;
\fill[color=red!50!white,opacity=0.3] (3, 1.)--(4.5, 1.)--(4.5,2.)--(3., 2.)--cycle;

\draw[very thin] (-0.5, 0.0)--(6.5,0.) node[below, near start] {$\eta$} node[below, near end] {$\eta'$};
\draw[very thin] (-0.5, 2.)--(6.5, 2.) node[below, near start] {$\edgeperp$} node[below, near end] {$\edgeperp'$};
\draw[very thin] (-0.5, 4.)--(6.5, 4.);
\draw[very thin] (0., -0.5)--(0., 4.5);
\draw[very thin] (3., -0.5)--(3., 4.5);
\draw[very thin] (6., -0.5)--(6., 4.5);

\draw[very thick, green!50!blue] (3,0)--(3,2) node[left, midway] {$\edge$};
\draw[very thick, green!80!black] (3,2)--(3,4) node[left, midway] {$\edge'$};
\draw[very thick, bfonce] (1.5,2.)--(4.5,2.) node[below, near start] {$\epsilon$};

\path node at (0.5, 0.5) {$K$};
\path node at (5.5, 0.5) {$L$};
\path node at (0.5, 3.5) {$M$};
\path node at (5.5, 3.5) {$N$};

\path[anchor=west] node at (7, 3.5) {$\bfG\m)_\edged^n \cdot \bfn_{\edge,\edged}
= \overline{\rule[-0.7ex]{0ex}{3.2ex}u_{2,\tau}^n\ \overline{\rule[-0.7ex]{0ex}{2.6ex} h_K^n,\ h_M^n},\quad u_{2,\tau'}^n\ \overline{\rule[-0.7ex]{0ex}{2.6ex}h_L^n,\ h_N^n}}$};
\path[anchor=west] node at (12.2, 2.9) {$\overline{\rule[-0.7ex]{0ex}{2.4ex}u_{1,\edge}^n,\ u_{1,\edge'}^n}$};
\path[anchor=west] node at (7, 1.8) {$h(\bfx,t)\, u_i(\bfx,t)\, \bfu(\bfx,t) \cdot \bfn_{\edge,\edged} =$};
\fill[color=bclair,opacity=0.3]           (7.2, 0.8)--(7.7, 0.8)--(7.7, 1.3)--(7.2, 1.3)--cycle; \path[anchor=west] node at (7.9, 1.){$h_K^n\ u_{1,\edge}\ u_{2,\edgeperp}$};
\fill[color=red!50!white,opacity=0.3]     (10.2, 0.8)--(10.7, 0.8)--(10.7, 1.3)--(10.2, 1.3)--cycle; \path[anchor=west] node at (10.9, 1.){$h_L^n\ u_{1,\edge}\ u_{2,\edgeperp'}$};
\fill[color=orangec!30!white,opacity=0.3] (7.2, 0)--(7.7, 0)--(7.7, 0.5)--(7.2, 0.5)--cycle; \path[anchor=west] node at (7.9, 0.2){$h_K^n\ u_{1,\edge}\ u_{2,\eta}$};
\fill[color=green!50!white,opacity=0.3]   (10.2, 0)--(10.7, 0)--(10.7, 0.5)--(10.2, 0.5)--cycle; \path[anchor=west] node at (10.9, 0.2){$h_L^n\ u_{1,\edge}\ u_{2,\eta'}$};
\end{tikzpicture}
\caption{Definition of the numerical momentum convection flux and of its expression as a function of the discrete piecewise constant functions. Particular case of the flux of a horizontal velocity $u_{1,\edge}$ through an horizontal face $\edged$. The notation $\overline{a,\ b}$ stands for a convex combination of $a$ and $b$.}
\label{fig:cons_conv}
\end{figure}
%
%

\paragraph{\bf The pressure gradient.}
Let us now turn to the term $Q_{3,i}\m$ defined by \eqref{eq:q3}.
Even if the discrete pressure gradient term is derived from its duality with the discrete divergence, it also takes the form of a conservative finite volume operator, which reads, for $\edge \in \edgesint^{(m,i)}$, $i=1,2$,
\[
\eth_\edge p^{n+1} = \frac 1 {|D_\edge|}\ \sum_{\substack{\edged \in \edgesd(D_\edge) \cap \edgesd^{(m,i)},\\[0.3ex] \edged \subset K}} |\edged|\ p^{n+1}_K \ \bfe\ei \cdot \bfn_{\edge,\edged},
\]
this expression being the discrete counterpart of $\partial_i p = \dive(p \,\bfe\ei)$.
To apply Theorem \ref{theo:lw}, we have to show that the assumption \eqref{hyp:x} is satisfied, that is to check that the term
\[
R\m = \sum_{n=0}^{N\m-1} \sum_{\edge \in \edges^{(m,i)}}\ \frac {\mathrm{diam} (D_\edge)}{|D_\edge|} \int_{t_n}^{t_{n+1}} \int_{D_\edge}
\sum_{\substack{\edged \in \edgesd(D_\edge) \cap \edgesd^{(m,i)},\\[0.3ex] \edged \subset K}}\ |\edged|\ \Big| (p_K^{n+1}\ \bfe\ei - p(\bfx,t)\ \bfe\ei)\cdot \bfn_{\edge,\edged} \Bigr| \dx \dt
\]
tends to zero when $m$ tends to $+\infty$.
Let $K$ and $L$ be the two primal cells adjacent to $\edge$; over $D_\edge \times (t_n,t_{n+1})$, $p(\bfx,t)$ takes two possible values: $p(\bfx,t) = p_K^n$ if $\bfx \in D_{K,\edge}$ and $p(\bfx,t) = p_L^n$ if $\bfx \in D_{L,\edge}$.
In addition, since $\edge$ and $\edged$ are parallel, we have $|\edged|=|\edge|$.
We thus have
\[
R\m = \sum_{n=0}^{N\m-1} \delta t\m \sum_{\substack{\edge \in \edges^{(m,i)},\\[0.2ex]\edge=K|L}}\ \frac {\mathrm{diam} (D_\edge)}{|D_\edge|}
\sum_{\substack{\edged \in \edgesd(D_\edge) \cap \edgesd^{(m,i)},\\[0.3ex] \edged \subset K}}\ |\edge|\ \bigl(|D_{K,\edge}|\ | p_K^{n+1}-p_K^n |+|D_{L,\edge}|\ | p_K^{n+1}-p_L^n |\bigr).
\]
This yields
\begin{multline*} \hspace{10ex}
|R\m| \leq \sum_{n=0}^{N\m-1}  \delta t\m \sum_{K \in \mesh\m} \Bigl(\sum_{\edge \in \edges(K) \cap \edges^{(m,i)}} |D_\edge|\Bigr)\ | p_K^{n+1}-p_K^n |
\\
+ \sum_{n=0}^{N\m-1}  \delta t\m
\sum_{\substack{\edge \in \edges^{(m,i)},\\[0.2ex]\edge=K|L}}\ \mathrm{diam} (D_\edge)\ |\edge|\ | p_K^n-p_L^n |.
\hspace{10ex}\end{multline*}
Thanks to the quasi-uniformity of the mesh, Lemma \ref{lem:translates} implies that both terms at the right-hand side tend to zero, and we get:
\begin{equation} \label{limq3}
Q_{3,i}\m \to - \int_0^T\int_{\Omega}  \bar p(\bfx,t) \ \partial_i \varphi_i(\bfx,t) \dx \dt \mbox{ as } m \to +\infty,\ \mbox{ for } i=1,\ 2.
\end{equation}
%
%

\paragraph{\bf The bathymetry.}
Let us now turn to the bathymetry term  $Q_{4,i}\m$ given by Equation \eqref{eq:q4}, which may be written
\[
Q_{4,i}\m = \int_0^T \int_\Omega \widetilde h\m(\bfx,t)\ \eth\m_i z (\bfx)\ \varphi_i (\bfx,t) \dx \dt,
\]
where
\begin{list}{--}{\itemsep=0.ex \topsep=0.5ex \leftmargin=1.cm \labelwidth=0.7cm \labelsep=0.3cm \itemindent=0.cm}
\item the function $\widetilde h\m : \Omega \times (0,T) \to \xR$ is defined by
$\widetilde h (\bfx,t) = h_{\edge,c}^{n+1} = \frac 1 2 (h_K^{n+1} + h_L^{n+1})$ for $\bfx \in D_\edge$ and $t \in (t_n,t_{n+1})$; the sequence $(\widetilde h\m)_\mnn$ is therefore bounded in $\xL^\infty(\Omega \times (0,T))$ and, thanks to the regularity of the mesh, converges to $\bar h$ in $\xL^1(\Omega \times (0,T))$;
\item by \eqref{gradz}, the function $\eth\m_i z: \Omega   \to \xR$ is defined by
\[
\eth_i\m z  = \sum_{\substack{\edge \in \edgesint^{(m,i)} \\[0.2ex] \edge = K|L,\ \bfx_K < \bfx_L}} \frac {|\edge|}{|D_\edge|}\ \bigl( z(\bfx_L) - z(\bfx_K) \bigr)\ \characteristic_{D_\edge}.
\]
Since $z$ is a regular function, the sequence of functions $(\eth\m_i z)_\mnn$ converges uniformly to the derivative $\partial_i z$ of $z$ with respect to the $i$-th variable as $m\to +\infty$.
\end{list}
Hence,
\begin{equation}\label{limq4}
Q_{4,i}\m \to  \int_0^T \int_\Omega \bar h(\bfx,t)\ \partial_i z (\bfx)\ \varphi_i (\bfx,t) \dx \dt \mbox{ as } m \to \infty.
\end{equation}
%
%

\paragraph{\bf Limit of the momentum balance equation.}
Passing to the limit in \eqref{sumQ} as $m\to +\infty$, using \eqref{limq1}, \eqref{limq3} and \eqref{limq4}, we get that the limit $(\bar h, \bar \bfu)$ of the approximate solutions defined by the forward Euler scheme \eqref{euler:scheme} satisfies \eqref{weak-mom}, which concludes the first part of the proof of  Theorem \ref{theo:cons-Euler}.
%
%
\subsection{Proof of the LW-consistency of the Heun scheme}\label{subsec:cons-heun}

\subsubsection{Mass balance}

Under the assumptions of Theorem \ref{theo:cons-Euler}, the aim here is to prove that the limit $(\bar h, \bar \bfu)$ of the scheme \eqref{heun:first_mass}-\eqref{heun:last_mom} satisfies the weak form of the mass equation \eqref{weak-mass}. 
In order to do so, we consider the equivalent mass equation \eqref{heun:equiv1}.
Because of the structure of the scheme, namely the fact that the divergence part of the convection operator is split in two terms, we cannot use here Theorem \ref{theo:lw} straightforwardly as in the case of the forward Euler scheme.
However, its building bricks, \ie\ Lemma\ \ref{lem:time-cons} which states the consistency of a discrete time derivative term and Lemma \ref{lem:space-cons} which addresses the consistency of a space divergence term, still apply.
We thus invoke Lemma \ref{lem:time-cons} with $U = (h,\bfu)$, $\beta(U) = h$, $\bff(U) = h \bfu$, $\mathcal P\m = \mathcal \mesh\m, \edgespart\m = \mathcal \edges\m$ for the time derivative and then Lemma \ref{lem:space-cons} twice, for the two divergence terms: once with $U = (h,\bfu)$,  $\bff(U) = h \bfu$, $\mathcal P\m = \mathcal \mesh\m, \edgespart\m = \mathcal \edges\m$, and then with $U = (\widehat h, \widehat \bfu)$,  $\bff(U) = \widehat h \widehat \bfu$.

Thanks to the arguments developed in Section \ref{subsubsec:cons-mass-euler}, it is easy to check that in each case, the assumptions of the lemmas are satisfied, so that we can conclude that $(\bar h, \bar \bfu)$ satisfies \eqref{weak-mass}.

\subsubsection{Momentum balance}
Still under the assumptions of Theorem \ref{theo:cons-Euler}, we now prove that the limit $(\bar h, \bar \bfu)$ of the scheme \eqref{heun:first_mass}-\eqref{heun:last_mom} satisfies the weak form of the mass equation \eqref{weak-mom}. 
Again we consider the equivalent momentum equation \eqref{heun:equiv2}. 
Multiplying the equation \eqref{heun:equiv2} by $|D_\edge| \varphi_{i,\edge}^{n+1}$, summing the result over $\edge \in \edges^{(m,i)}$ and then summing over $n \in \llbracket 0, N-1 \rrbracket$ and $i= 1,2$ yields:
  
\begin{equation} \label{sumQheun}
\sum_{i=1}^2 \Big[ Q_{1,i}\m + \frac 1 2 ( Q_{2,i}\m + \widehat Q_{2,i}\m +  Q_{3,i}\m +  \widehat  Q_{3,i}\m) + Q_{4,i}\m + \widehat Q_{4,i}\m)\Big] = 0,
\end{equation}
where $Q_{1,i}\m, \ldots,  Q_{4,i}\m$ are defined by \eqref{eq:q1}-\eqref{eq:q4}, and $\widehat Q_{2,i}\m, \widehat Q_{3,i}\m, \widehat Q_{4,i}\m$  are defined by \eqref{eq:q2}-\eqref{eq:q4}, replacing the unknowns $h,p,u$ by $\widehat h, \widehat p, \widehat u$.   

Again, because of the structure of the scheme, we cannot use  Theorem \ref{theo:lw} directly: we use Lemma \ref{lem:time-cons} for the time derivative term $Q_{1,i}\m$ and Lemma \ref{lem:space-cons} for the terms $Q_{2,i}\m$ and $\widehat Q_{2,i}\m$, with $\mathcal P\m$  the set of dual cells associated with $u_i$ (that is with the vertical edges for $i=1$ and the horizontal edges for $i = 2$),  with $\edgespart = \edgesd^{(m,i)}$ and with the dual fluxes $(\bfG)_{\edged}^n$  defined by \eqref{discrete_dual_flux}.
We first apply Lemma \ref{lem:time-cons} with $U = (h,\bfu)$, $\beta(U) = h u_i$, $\bff(U) = h \bfu  u_i$,  and  then Lemma \ref{lem:space-cons}, once with  $U = (h,\bfu)$, $\beta(U) = h u_i$, $\bff(U) = h \bfu  u_i$ and then with $U = (\widehat h,\widehat \bfu)$, $\beta(U) = \widehat h \widehat u_i$, $\bff(U) = \widehat h \widehat \bfu \widehat u_i$. 
Thanks to the arguments developed in Section \ref{subsubsec:cons-mom-euler}, it is easy to check that in each case, the assumptions of the lemmas are satisfied, so that 
 \begin{multline} \label{limq1q2heun}
 Q_{1,i}\m +\frac 1 2( Q_{2,i}\m + \widehat Q_{2,i}\m) \to  
-\int_0^T \int_\Omega \Bigl[ \bar h(\bfx,t)\bar u_i(\bfx,t) \, \partial_t \varphi_i(\bfx,t) + \bar h(\bfx,t) \bar u_i(\bfx,t) \bar \bfu(\bfx,t) \, \cdot \gradi \varphi_i(\bfx,t) \Bigr] \dx \dt  \\
- \int_\Omega h_0(\bfx) u_{i,0}(\bfx) \, \varphi_i(\bfx,0) \dx \mbox{ as } m \to + \infty.  
\end{multline}

The proof of convergence of the pressure gradient and bathymetry terms $Q_{3,i}\m$, $Q_{4,i}\m$, $\widehat  Q_{3,i}\m$ and $\widehat Q_{4,i}\m$ follow the exact same lines as that of the terms $Q_{3,i}\m$ and $Q_{4,i}\m$ in Section \ref{subsubsec:cons-mom-euler}. 
Hence,
\begin{multline}\label{limq3q4heun}
\sum_{i=1}^2\! \frac 1 2 \Big(Q_{3,i}\m +\widehat Q_{3,i}\m  + Q_{4,i}\m +\widehat Q_{4,i}\m \!\Big)
\to  - \int_0^T \int_\Omega  \Big(\bar p(\bfx,t) \ \dive \bfvarphi(\bfx,t) + \bar h(\bfx,t) \gradi z (\bfx) \cdot \bfvarphi  (\bfx,t) \Big) \dx \dt \\ \mbox{ as } m \to+ \infty.
\end{multline}
Therefore, owing to \eqref{limq1q2heun} and \eqref{limq3q4heun}, we may pass to the limit in \eqref{sumQheun} and conclude that $(\bar h, \bar \bfu)$ satisfies \eqref{weak-mom}.
This concludes the proof of Theorem \ref{theo:cons-Euler}.
%
%
\section{LW-entropy consistency of the forward Euler scheme} \label{sec:entropy}

\begin{theorem}[LW-entropy consistency of the forward Euler scheme]
Let $(\mesh\m,\edges\m)_\mnn$ be a sequence of meshes and $(\delta t\m)_\mnn$ be a sequence of time steps such that $\delta_{\mesh\m}$ and $\delta t\m$ tend to zero as $m \to +\infty$~; assume that there exists $\theta >0$ such that $\theta_{\mesh\m} \le \theta$ for any $m\in \xN$ (with $\theta_{\mesh\m} $ defined by \eqref{regmesh}).
Let $(h\m,\bfu\m)_\mnn$ be the associated sequence of solutions to the scheme \eqref{euler:scheme}, and suppose that $(h\m,\bfu\m)_\mnn$ satisfies \eqref{bound-h}, \eqref{bound-u} and converges to $(\bar h, \bar \bfu)$ in $\xL^1(\Omega \times (0,T)) \times \xL^1(\Omega \times (0,T))^2$.
Assume that the sequence of solutions satisfies the following stability assumptions
\begin{equation} \label{hyp:BVt}
\begin{array}{ll}
\exists\ C_{BVt} \in \xR_+ \, : \,
& \displaystyle
\sum_{n=0}^{N\m-1} \sum_{K\in\mesh\m} |K|\ \bigl| (h\m)^{n+1}_K - (h\m)_K^n \bigr|  \le C_{BVt},\quad \forall m \in \xN,
\\[3ex] & \displaystyle
\sum_{n=0}^{N\m-1} \sum_{\edge\in\edges^{(i,m)}} |D_\edge|\ \bigl| (u\m)^{n+1}_{i,\edge} - (u\m)_{i,\edge}^n \bigr|  \le C_{BVt},\quad \forall m \in \xN,\ i=1,2,
\end{array}
\end{equation}
that the sequence of meshes and time-steps satisfies the condition
\begin{equation} \label{hyp:deltat}
\dfrac {\delta t\m} {\inf_{\edge \in \edges\m} |\edge|}  \to 0  \mbox{ as } m \to +\infty,
\end{equation}
and that the coefficients $\lambda_\edge^K$ and $\mu^\edge_\edged$ in \eqref{muscl:cons} and \eqref{muscl:u:cons} satisfy:
\begin{equation}\label{muscl:entropie}
\lambda_\edge^K  \in [\frac 1 2 , 1] \mbox { if } \bfF_\edge \cdot \bfn_{K, \edge} \ge 0,
\mbox{ and } \mu^\edge_\edged \in [\frac 1 2 , 1]  \mbox { if } \bfF_\edged \cdot \bfn_{\edge, \edged} \ge 0.
\end{equation}
Then $(\bar h, \bar \bfu)$ satisfies the entropy inequality \eqref{eq:weakentropy}.
\end{theorem}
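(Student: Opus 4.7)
The strategy, following the indication in Section~\ref{sec:stab} that one "passes to the limit in each discrete energy inequality on its respective mesh", is to combine the discrete kinetic energy balance of Lemma~\ref{lem:disc_kin} (on the dual cells) with the discrete potential energy balance of Lemma~\ref{lem:disc-pot} (on the primal cells), multiply by a non-negative test function $\varphi\in C^\infty_c(\Omega\times[0,T),\xR_+)$ sampled at the appropriate cell or edge centers, and pass to the limit on the combined sum. The reason for working with the sum rather than with each inequality separately is that the pressure-work terms $\bfu\cdot\gradi p$ and $p\dive\bfu$, as well as the bathymetry-work terms $gh\bfu\cdot\gradi z$ and $gz\dive(h\bfu)$, are not in divergence form individually (and have no a priori weak limit since $\bar h$ and $\bar\bfu$ are only $\xL^\infty$), but their sums recombine, via the discrete div-grad duality \eqref{eq:div-grad}, into divergence-form expressions $\dive(p\bfu)$ and $\dive(ghz\bfu)$ that pass to the limit by standard arguments.

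The first step is to show that the residuals $R_{i,\edge}^{n+1}$ and $r_K^{n+1}$ of Lemmas~\ref{lem:disc_kin} and~\ref{lem:disc-pot} are non-negative up to time-jump contributions. Using the mass balances \eqref{sch-mass} and \eqref{mass_dual} to rewrite the cross terms and completing the square, one obtains a sum of (i) non-negative quadratic dissipation terms weighted by the outflow parts of the numerical fluxes, whose non-negativity precisely requires the MUSCL condition $\lambda_\edge^K,\mu_\edged^\edge\in[\tfrac12,1]$ on outflow faces (this reproduces the standard dissipation estimate of the upwind scheme), and (ii) time-jump contributions of the form $\frac{h^n}{2\delta t}(u^{n+1}-u^n)^2$ and $\frac{g}{2\delta t}(h^{n+1}-h^n)^2$, which are non-negative but need to be proven to vanish in the limit. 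The energy balances are thereby turned into discrete inequalities of the form $\mathrm{LHS}\leq 0$.

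Next, I would multiply the kinetic inequality by $|D_\edge|\delta t\,\varphi(\bfx_\edge,t_n)$ and the potential inequality by $|K|\delta t\,\varphi(\bfx_K,t_n)$, sum in time and space, and add the two. The time-derivative and convective divergence terms are then treated exactly as in Sections~\ref{subsubsec:cons-mass-euler}--\ref{subsubsec:cons-mom-euler}, via Lemmas~\ref{lem:time-cons} and~\ref{lem:space-cons} of the appendix; these produce the entropy density $\bar E$ and the kinetic and potential parts of the entropy flux $\bar\Phi$. The essential new calculation concerns the pressure/bathymetry pairs. Applying the discrete duality \eqref{eq:div-grad} with $\xi=p\varphi$ (resp.\ $\xi=gz\varphi$) and using the discrete product rule $(p\varphi)_L-(p\varphi)_K=\tfrac12(p_L+p_K)(\varphi_L-\varphi_K)+\tfrac12(\varphi_L+\varphi_K)(p_L-p_K)$, one obtains that the sum of the two discrete pressure-work contributions equals, up to an error vanishing with the mesh size (thanks to \eqref{bound-h}--\eqref{bound-u} and the smoothness of $\varphi$), $-\sum|D_\edge|\delta t\,u_\edge\,\bar p_\edge\,\eth_\edge\varphi$, which converges to $-\int \bar p\,\bar\bfu\cdot\gradi\varphi$; the analogous computation for the bathymetry pair, using the regularity of $z$, gives $-\int g\bar h\,z\,\bar\bfu\cdot\gradi\varphi$. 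Collecting all limits yields the weak entropy inequality \eqref{eq:weakentropy}.

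The main obstacle is the control of the non-negative time-jump remainders, since \eqref{hyp:BVt} alone only bounds $\sum|h^{n+1}-h^n|$ and not $\sum(h^{n+1}-h^n)^2/\delta t$. This is where the assumption \eqref{hyp:deltat} becomes decisive: the discrete mass balance \eqref{sch-mass}, combined with the $\xL^\infty$ bounds \eqref{bound-h}--\eqref{bound-u} and the mesh regularity, yields $|h_K^{n+1}-h_K^n|\leq C\,\delta t/\inf_\edge|\edge|$, and the discrete momentum balance \eqref{sch-vitesse} together with the lower bound on the dual water height gives analogously $|u_\edge^{n+1}-u_\edge^n|\leq C\,\delta t/\inf_\edge|\edge|$. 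Therefore,
\[
\sum_{n,K}|K|(h_K^{n+1}-h_K^n)^2 \leq \frac{C\,\delta t}{\inf_\edge|\edge|}\sum_{n,K}|K|\,|h_K^{n+1}-h_K^n|\leq \frac{C\,\delta t\,C_{BVt}}{\inf_\edge|\edge|}\;\xrightarrow[m\to\infty]{}\;0,
\]
and similarly for the velocity time-jumps. Once these vanish, the combined limit inequality reduces to \eqref{eq:weakentropy}, which completes the proof.
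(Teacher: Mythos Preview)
Your overall strategy matches the paper's proof closely: combine the kinetic and potential energy balances on their respective meshes, multiply by the test function, recombine the pressure and bathymetry pairs via a discrete div--grad duality (the paper encapsulates this in Lemma~\ref{lem:u-p-phi}), and pass to the limit in the convection pieces via Theorem~\ref{theo:lw}.

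However, your treatment of the remainders is misdirected. The quadratic time-jump terms $\tfrac{g}{2\delta t}(h_K^{n+1}-h_K^n)^2$ and $\tfrac{1}{2\delta t}h_{D_\edge}^{n+1}(u_{i,\edge}^{n+1}-u_{i,\edge}^n)^2$ are non-negative and sit on the correct side of the identity: after multiplying by $\varphi\ge 0$ and summing, the paper simply \emph{drops} them (this is the inequality $\mathcal R_k\m\ge\mathcal R_{k,1}\m+\mathcal R_{k,2}\m$ and likewise for $\mathcal R_p\m$). There is no need to show they vanish, and your last displayed estimate is aimed at the wrong target. The terms that actually threaten the sign are the \emph{cross} terms in $R_{i,\edge}^{n+1}$ and $r_K^{n+1}$, namely
\[
\sum_{\edged\in\edgesd(D_\edge)}|\edged|\,\bfF_\edged^n\cdot\bfn_{\edge,\edged}\,(u_{i,\edged}^n-u_{i,\edge}^n)(u_{i,\edge}^{n+1}-u_{i,\edge}^n)
\quad\text{and}\quad
\sum_{\edge\in\edges(K)}|\edge|\,(h_K^{n+1}-h_K^n)\,h_\edge^n\,\bfu_\edge^n\cdot\bfn_{K,\edge},
\]
which are \emph{linear} in the time-jump. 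For these one bounds the flux and the space-jump factor in $\xL^\infty$, converts $\sum_\edge|\edge|$ (resp.\ $\sum_\edged|\edged|$) into $C_\theta\,|K|/\inf_\edge|\edge|$ (resp.\ $C_\theta\,|D_\edge|/\inf_\edge|\edge|$) via the mesh regularity, and then \eqref{hyp:BVt} and \eqref{hyp:deltat} give directly
\[
\bigl|\mathcal R_{p,2}\m\bigr| \le C\,\frac{\delta t\m}{\inf_{\edge\in\edges\m}|\edge|}\,\sum_{n=0}^{N\m-1}\sum_{K\in\mesh\m}|K|\,|h_K^{n+1}-h_K^n| \le C\,\frac{\delta t\m}{\inf_{\edge\in\edges\m}|\edge|}\,C_{BVt}\to 0,
\]
and similarly for $\mathcal R_{k,2}\m$. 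Your final computation is essentially this one, but applied to the wrong remainder; no pointwise estimate $|h_K^{n+1}-h_K^n|\le C\delta t/\inf|\edge|$ deduced from the scheme is needed. The space-jump quadratic terms (your item~(i)) are handled by reordering the sum over dual edges: for $\edged=\edge|\edge'$ with $\bfF_\edged^n\cdot\bfn_{\edge,\edged}\ge 0$ one has $(u_{i,\edged}^n-u_{i,\edge}^n)^2-(u_{i,\edged}^n-u_{i,\edge'}^n)^2=(1-2\mu_\edged^\edge)(u_{i,\edge'}^n-u_{i,\edge}^n)^2\le 0$ precisely by \eqref{muscl:entropie}, and analogously for $h$ with $\lambda_\edge^K$.

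A second point you omit is the time offset in the pressure pair: the kinetic balance carries $u_\edge^{n+1}\,\eth_\edge p^{n+1}$ while the potential balance carries $p_K^n\,\dive_K\bfu^n$, so the duality identity cannot be applied at a common time level. The paper shifts the index to $n{+}1$ throughout, producing an extra remainder $\mathcal D\m$ controlled by $|\varphi_\edge^{n}-\varphi_\edge^{n+1}|\le c_\varphi\,\delta t\m$ together with the $\xL^\infty$ bounds, and a boundary-in-time term $\delta t\m B^0$ that also needs \eqref{hyp:deltat} to vanish.
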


Note that the condition \eqref{hyp:deltat} is stronger than a CFL condition.
This inequality, together with the BV-stability \eqref{hyp:BVt} are used twice in the proof of the theorem, to prove that the remainder terms for the kinetic and potential energy balances tend to a non-negative quantity.
The condition \eqref{muscl:entropie} is also rather restrictive.
Indeed, it is satisfied by the usual two slopes  minmod limiter \cite{god-96-num} only in the case of a uniform Cartesian mesh \cite{pia-13-for}, and it is not satisfied by the three slopes minmod limiter.
For a non-uniform mesh, this condition does not allow to obtain a quasi second-order approximation.

\bigskip
\begin{proof}
Let $\varphi \in C_c^\infty(\Omega\times[0,T), \xR_+)$, and for a given discretisation $(\mesh\m,\edges\m)$ let $\varphi_K^n$ (resp. $\varphi_\edge^n$) denote the mean value of $\varphi$ on $K\times(t_n,t_{n+1})$ (resp. $D_\edge\times(t_n,t_{n+1})$), for any $K\in \mesh\m$ (resp. $\edge \in \edges\m$) and $n \in \llbracket 0,N\m -1\rrbracket$.
Let us multiply the part of the discrete kinetic energy balance associated to the $i$-th velocity component \eqref{disc-kinet} by $\delta t\ |D_\edge|\ \varphi_\edge^n$ and sum over $\edge \in \edges^{(m,i)}$ and $i=1,\ 2$; let us then multiply the discrete potential energy balance \eqref{disc-pot} by $\delta t\ |K|\ \varphi_K^n$ and sum over $K \in \mesh\m$.
Summing the two resulting equations and summing over $n \in \llbracket 0,N\m -1\rrbracket$, we get
\begin{multline} \label{ineq:entropy-weak-rest} \hspace{3ex}
\int_0^T \int_\Omega \mathcal C\m_{\mbox{\tiny{KIN}}}(U\m) \varphi (\bfx,t) \dx \dt + \int_0^T \int_\Omega \mathcal C\m_{\mbox{\tiny{POT}}}(U\m) \varphi (\bfx,t) \dx \dt
+ \mathcal P\m + \mathcal Z\m \\= -\mathcal R_k\m - \mathcal R_p\m,
\hspace{3ex} \end{multline}
where the different terms in this equation satisfy:
\begin{align*} &
\mathcal C\m_{\mbox{\tiny{KIN}}}(U\m) = \sum_{i=1}^2 \mathcal C\m_{\mbox{\tiny{KIN}},i}(U\m)\quad \mbox{with, for } i=1,2 \mbox{ and } \edge \in \edgesint^{(i,m)},
\\ & \hspace{5ex}
\mathcal C\m_{\mbox{\tiny{KIN}},i}(U\m)|_{D_\edge} = (\eth_t E_{k,i})_\edge^n
+ \frac 1 {|D_\edge|} \sum_{\edged \in \edgesd(D_\edge)} |\edged| \frac {(u_{i,\edged}^n)^2} 2 \bfF_\edged^n \cdot \bfn_{\edge, \edged}
\mbox{ and } (E_{k,i})_\edge^n = \frac 1 2 h_{D_\edge}^n (u_{i,\edge}^n)^2,
\displaybreak[1]\\[2ex] &
\mathcal C\m_{\mbox{\tiny{POT}}}(U\m)|_K = \frac 1 2 g\, (\eth_t h^2)_K^n + \dive_K\big(\frac 1 2 g (h^n)^2 \bfu^n\big),
\displaybreak[1]\\[2ex] &
\mathcal P\m = \sum_{n=0}^{N\m-1} \delta t\m \ \Bigl[\sum_{\edge \in \edges\m} |D_\edge|\ u_\edge^{n+1}\, \eth_\edge p^{n+1}\, \varphi_\edge^n + \sum_{K \in \mesh\m} |K|\ p_K^n\ \dive_K(\bfu^n)\, \varphi_K^n \Bigr],
\displaybreak[1]\\[3ex] &
\mathcal Z\m = \sum_{n=0}^{N\m-1} \delta t\m \Bigl[ g \sum_{\edge \in \edges\m} |D_\edge|\ h_{\edge,c}^{n+1} u_\edge^{n+1}\, \eth_\edge z\ \varphi_\edge^n
+ g \sum_{K\in \mesh\m} |K|\ z_K\ \Big( (\eth_t h)_K^n + \dive_K(h^n \bfu^n) \Big)\varphi_K^n \Bigr],
\displaybreak[1]\\[3ex] &
\mathcal R_k\m \ge \sum_{i=1}^2 \sum_{n=0}^{N\m-1} \delta t\m \sum_{\edge \in \edgesint^{(i,m)}}
\sum_{\edged \in \edgesd(D_\edge)} |\edged| \ \bfF_\edged^n \cdot \bfn_{\edge,\edged}
  \Bigl(-\frac 1 2 (u^n_{i,\edged} - u_{i,\edge}^n)^2 + (u^n_{i,\edged} - u_{i,\edge}^n) (u^{n+1}_{i,\edge} - u_{i,\edge}^n) \Bigr) \ \varphi_\edge^n,
\displaybreak[1]\\[2ex] &
\mathcal R_p\m \ge \sum_{n=0}^{N\m-1} \delta t\m \sum_{K\in \mesh\m}
\Big[
- \frac 1 2 g \sum_{\edge \in \edges(K)} |\edge|\ (h^n_\edge - h_K^n)^2\ \bfu_\edge^n \cdot \bfn_{K,\edge}
\\ & \hspace{40ex}
+ g \sum_{\edge \in \edges(K)} |\edge|\ (h^{n+1}_K - h_K^n)\ h_\edge^n\ \bfu_\edge^n \cdot \bfn_{K,\edge}
\Big] \varphi_K^n.
\end{align*}
In the terms $\mathcal P\m$ and $\mathcal Z\m$, the quantity $u_\edge^{n+1}$ stands for $\bfu_\edge^{n+1} \cdot \bfe^{(\edge)}$, where $\bfe^{(\edge)}$ is the vector of the canonical basis of $\xR^2$ normal to $\edge$.
%
%

\paragraph{\bf Kinetic energy convection term.}
Let us check that the above defined convection operator $\mathcal C^{(m,i)}_{\mbox{\tiny{KIN}}}$ satisfies the hypotheses \eqref{hyp:condi}--\eqref{hyp:x} of the LW-consistency theorem \ref{theo:lw} given in the appendix.
In fact, we check the consistency of $\mathcal C^{(m,i)}_{\mbox{\tiny{KIN}},i}$ for $i=1,2$ and, to this purpose, apply Theorem \ref{theo:lw} with $d=2$, $\mathcal P\m$ and $\mathfrak F\m$ the $i$-th dual mesh and its set of edges, $U=(h,\bfu)$ and $\beta(U) = E_{k,i}(U)= \frac 1 2 h u_i^2$.

The assumption \eqref{hyp:condi} is easily checked following the technique used to prove the same assumption for the initialisation of the momentum balance equation.
The assumption \eqref{hyp:t} compares the integrals of the quantities $(E_{k,i}\m)_\edge^n$ and of the expression of the kinetic energy applied to the discrete unknown $E_{k,i}(U\m(\bfx,t))$, \ie\ states that $\mathcal D\m$ tends to zero, with $\mathcal D\m$ defined by:
\[
\mathcal D\m=\sum_{n=0}^{N\m-1} \sum_{\edge \in \edges^{(m,i)}}\ \int_{t_n}^{t_{n+1}} \int_{D_\edge} \bigl| (E_{k,i}\m)_\edge^n - E_{k,i}(U\m(\bfx,t)) \bigr| \dx \dt.
\]
Because of the definition of $(h\m)^n_{D_\edge}$ as an average of $(h\m)^n_K$ and $(h\m)^n_L$, for $\edge=K|L$, weighted by the area of the half-diamond cells, $(E_{k,i}\m)_\edge^n$ may be integrated as a piecewise constant function over the half-diamond cells, and this piecewise constant function is exactly $E_{k,i}(U\m(\bfx,t))$, so $\mathcal D\m$ vanishes, and Assumption \eqref{hyp:t} is trivially satisfied.

Let us now turn to the assumption \eqref{hyp:x}, which reads
\[\sum_{n=0}^{N\m-1} \sum_{\edge \in \edges^{(m,i)}}\ \frac{\mathrm{diam} (D_\edge)}{|D_\edge|} \int_{t_n}^{t_{n+1}} \int_{D_\edge} \Big| \sum_{\edged \in \edgesd(D_\edge)}
|\edged|\Big((\bfF\m)_{\edged}^n -\bff(U^m(\bfx,t) \Big)\cdot \bfn_{\edge, \edged} \Big| \dx \dt \to 0
\]
as $m$ tends to $+\infty$, with $(\bfF\m)_{\edged}^n = \frac 1 2 (u_{i,\edged^n})^2 \bfF_\edged^n$ and $\bff(U) = \frac 1 2 h |u_i|^2 \bfu$.
This is obtained by setting the left-hand side of this expression as a set of jumps, invoking the assumed $\xL^\infty$ bounds for the discrete solutions and the quasi-uniformity of the meshes of the sequence, and applying Lemma \ref{lem:translates}.
As for the convection term in the momentum balance equation, the most intricate case is faced when $\edged$ is orthogonal to $\edge$, and we give the expression of both the discrete flux and the flux applied to the discrete function on Figure \ref{fig:cons_ek}.
A comparison of Figures \ref{fig:cons_conv} and \ref{fig:cons_ek} shows that the convergence proof searched here is very close to the same proof for the momentum convection, and we do not detail it further.

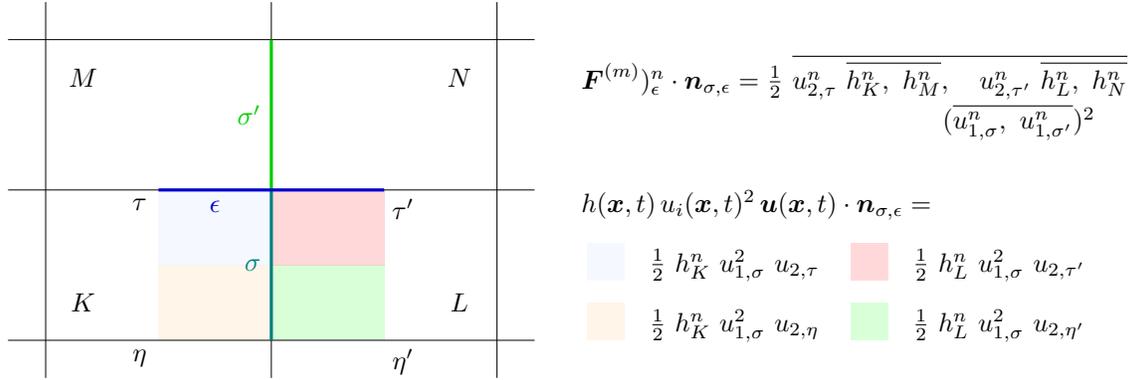
\begin{figure}[htb]
\centering
\begin{tikzpicture}[scale=1]
\fill[color=orangec!30!white,opacity=0.3] (1.5, 0.)--(3., 0.)--(3.,1.)--(1.5, 1.)--cycle;
\fill[color=bclair,opacity=0.3] (1.5, 1.)--(3., 1.)--(3.,2.)--(1.5, 2.)--cycle;
\fill[color=green!50!white,opacity=0.3] (3, 0.)--(4.5, 0.)--(4.5,1.)--(3., 1.)--cycle;
\fill[color=red!50!white,opacity=0.3] (3, 1.)--(4.5, 1.)--(4.5,2.)--(3., 2.)--cycle;

\draw[very thin] (-0.5, 0.0)--(6.5,0.) node[below, near start] {$\eta$} node[below, near end] {$\eta'$};
\draw[very thin] (-0.5, 2.)--(6.5, 2.) node[below, near start] {$\edgeperp$} node[below, near end] {$\edgeperp'$};
\draw[very thin] (-0.5, 4.)--(6.5, 4.);
\draw[very thin] (0., -0.5)--(0., 4.5);
\draw[very thin] (3., -0.5)--(3., 4.5);
\draw[very thin] (6., -0.5)--(6., 4.5);

\draw[very thick, green!50!blue] (3,0)--(3,2) node[left, midway] {$\edge$};
\draw[very thick, green!80!black] (3,2)--(3,4) node[left, midway] {$\edge'$};
\draw[very thick, bfonce] (1.5,2.)--(4.5,2.) node[below, near start] {$\epsilon$};

\path node at (0.5, 0.5) {$K$};
\path node at (5.5, 0.5) {$L$};
\path node at (0.5, 3.5) {$M$};
\path node at (5.5, 3.5) {$N$};

\path[anchor=west] node at (7, 3.5) {$\bfF\m)_\edged^n \cdot \bfn_{\edge,\edged}
= \frac 1 2\ \overline{\rule[-0.7ex]{0ex}{3.2ex}u_{2,\tau}^n\ \overline{\rule[-0.7ex]{0ex}{2.6ex} h_K^n,\ h_M^n},\quad u_{2,\tau'}^n\ \overline{\rule[-0.7ex]{0ex}{2.6ex}h_L^n,\ h_N^n}}$};
\path[anchor=west] node at (11.8, 2.9) {$(\overline{\rule[-0.7ex]{0ex}{2.4ex}u_{1,\edge}^n,\ u_{1,\edge'}^n})^2$};
\path[anchor=west] node at (7, 1.8) {$h(\bfx,t)\, u_i(\bfx,t)^2\, \bfu(\bfx,t) \cdot \bfn_{\edge,\edged} =$};
\fill[color=bclair,opacity=0.3]           (7.2, 0.8)--(7.7, 0.8)--(7.7, 1.3)--(7.2, 1.3)--cycle; \path[anchor=west] node at (7.9, 1.){$\frac 1 2\ h_K^n\ u_{1,\edge}^2\ u_{2,\edgeperp}$};
\fill[color=red!50!white,opacity=0.3]     (10.7, 0.8)--(11.2, 0.8)--(11.2, 1.3)--(10.7, 1.3)--cycle; \path[anchor=west] node at (11.4, 1.){$\frac 1 2\ h_L^n\ u_{1,\edge}^2\ u_{2,\edgeperp'}$};
\fill[color=orangec!30!white,opacity=0.3] (7.2, 0)--(7.7, 0)--(7.7, 0.5)--(7.2, 0.5)--cycle; \path[anchor=west] node at (7.9, 0.2){$\frac 1 2\ h_K^n\ u_{1,\edge}^2\ u_{2,\eta}$};
\fill[color=green!50!white,opacity=0.3]   (10.7, 0)--(11.2, 0)--(11.2, 0.5)--(10.7, 0.5)--cycle; \path[anchor=west] node at (11.4, 0.2){$\frac 1 2\ h_L^n\ u_{1,\edge}^2\ u_{2,\eta'}$};
\end{tikzpicture}
\caption{Definition of the numerical kinetic energy convection flux and of its expression as a function of the discrete piecewise constant functions. Particular case of the flux of a horizontal velocity $u_{1,\edge}$ through an horizontal face $\edged$. The notation $\overline{a,\ b}$ stands for a convex combination of $a$ and $b$.}
\label{fig:cons_ek}
\end{figure}

By Theorem \ref{theo:lw}, we thus get that, when $m$ tends to $+\infty$,
\begin{multline*} \hspace{5ex}
\int_0^T \int_\Omega \mathcal C\m_{\mbox{\tiny{KIN}},i}(U\m)\ \varphi (\bfx,t) \dx \dt \to
\\
-\int_\Omega E_{k,i}(U_0)\ \varphi(x,0) \dx - \int_0^T\int_\Omega E_{k,i}(\bar U)\ \partial_t \varphi + E_{k,i}(\bar U)\ \bar \bfu \cdot \gradi \varphi \dx \dt,
\hspace{5ex} \end{multline*}
with $E_{k,i}(\bar U) = \frac 1 2\, \bar h\, \bar u_i^2$.
Summing over $i=1,2$, we get that
\begin{multline} \label{convergence-Ckin} \hspace{5ex}
\int_0^T \int_\Omega \mathcal C\m_{\mbox{\tiny{KIN}}}(U\m)\ \varphi (\bfx,t) \dx \dt \to
\\
-\int_\Omega E_k(U_0)\ \varphi(x,0) \dx - \int_0^T\int_\Omega E_k(\bar U)\ \partial_t \varphi + E_k(\bar U)\ \bar \bfu \cdot \gradi \varphi \dx \dt,
\hspace{5ex} \end{multline}
with $E_k(\bar U) = \frac 1 2\, \bar h\,|\bar \bfu|^2$.
%
%

\paragraph{\bf Potential energy convection terms.}
Let us now check that the above defined convection operator $\mathcal C\m_{\mbox{\tiny{POT}}}$ satisfies the hypotheses \eqref{hyp:condi}--\eqref{hyp:x} of Theorem \ref{theo:lw} which we now apply with $d=2$, $\mathcal P\m$ and $\mathfrak F\m$ the primal mesh $\mesh\m$ and its set of edges $\edges\m$, $U=(h,\bfu)$, $\beta(U) = \frac 1 2 g h^2$ and $\bff(U) =\frac 1 2 g h^2 \bfu$.
Indeed, the initial condition for $h^2$ is $(h_K^0)^2 = (\langle h_0 \rangle_K)^2$, $\forall K \in \mesh\m$ and $m \in \xN$, and, since $h_0$ is assumed to be bounded in $\xL^\infty$,
\[
\sum_{K\in \mesh\m} \int_K \bigl| (\langle h_0 \rangle_K)^2 - h_0(x)^2 \bigr| \dx \to 0 \mbox{ as } m\to +\infty,
\]
so that the hypothesis \eqref{hyp:condi} is satisfied.
Next, for $m \in \xN$, $K \in \mesh\m$ and $n \in \llbracket 0, N\m-1 \rrbracket$, the term used in the discretisation of the time derivative, namely $((h\m)_K^n)^2$ is equal to $(h\m(\bfx,t))^2$, $\forall \bfx \in K$ and $\forall t \in (t_n,t_{n+1})$, so Assumption \eqref{hyp:t} is trivially satisfied.
Finally, the left hand side of \eqref{hyp:x} reads
\[
\mathcal D\m = \sum_{n=0}^{N\m-1} \sum_{K\in \mesh\m} \frac{\diam(K)}{|K|} \int_{t_n}^{t_{n+1}} \int_K
\sum_{\edge \in \edges(K)} |\edge|\ \bigl| \mathcal D_{\edge,K}^n (\bfx,t) \bigr| \dx \dt,
\]
with, for $\bfx \in K$ and $t \in (t_n,t_{n+1})$,
\[
\mathcal D_{\edge,K}^n (\bfx,t) = \frac 1 2 g\ \bigl((h_\edge^n)^2\ \bfu_\edge^n - h^2(\bfx,t)\ \bfu(\bfx,t) \bigr) \cdot \bfn_{K,\edge}.
\]
Let $i$ be the index such that $\edge$ is orthogonal to $\bfe\ei$ and let $\edge'$ be the opposite edge to $\edge$ in $K$.
We have:
\[
\mathcal D_{\edge,K}^n (\bfx,t) = \frac 1 2 g\ \begin{cases}
\bigl( (h_\edge^n)^2 -(h_K^n)^2 \bigr)\ u_{i,\edge}^n & \quad \mbox{if } \bfx \in D_{K,\edge},
\\[1ex]
(h_\edge^n)^2\ u_{i,\edge}^n - (h_K^n)^2\ u_{i,\edge'}^n & \quad \mbox{if } \bfx \in D_{K,\edge'}.
\end{cases}
\]
Thanks to the fact that $h_\edge^n$ is a convex combination of $h_K^n$ and $h_L^n$ for $\edge=K|L$ and to the $\xL^\infty$ bound on $h$ and $\bfu$, the quantity $\mathcal D_{\edge,K}^n (\bfx,t)$ may be bounded independently of $\bfx$ and $t$ by a weighted sum of the jumps $|h_K^n - h_L^n|$ and $|u_{i,\edge}^n - u_{i,\edge'}^n|$, and we conclude that $\mathcal D\m$ tends to zero when $m$ tends to $+\infty$ by Lemma \ref{lem:translates}, using the uniform bounds of the unknowns, the $L^1$ convergence of $h$ and $\bfu$ and the regularity of the meshes of the sequence.
We thus have
\begin{multline} \label{convergence-Cpot} \hspace{5ex}
\int_0^T \int_\Omega C\m_{\mbox{\tiny{POT}}}(U\m)\ \varphi (\bfx,t) \dx \dt \to
\\
-\int_\Omega \frac 1 2 g\ h_0(\bfx)^2\ \varphi(x,0) \dx - \int_0^T\int_\Omega \bigl(\frac 1 2 g\ \bar h^2\ \partial_t \varphi + \frac 1 2 g\ \bar h^2\ \bar \bfu \cdot \gradi \varphi\bigr) \dx \dt.
\hspace{5ex} \end{multline}
%
%

\paragraph{\bf Pressure terms.}
Let us rewrite $\mathcal P\m$ as
\begin{align*} &
\mathcal P\m = \mathcal D\m + \sum_{n=0}^{N\m-1} \delta t\m ( A^{n+1} + B^{n+1}) \ - \delta t\m B^0,\quad \mbox{with}
\\ & \hspace{10ex}
\mathcal D\m = \sum_{n=0}^{N\m-1} \delta t\m \ \sum_{\edge \in \edges\m} |D_\edge|\ u_\edge^{n+1}\, \eth_\edge p^{n+1}\, (\varphi_\edge^n - \varphi_\edge^{n+1}),
\\ & \hspace{10ex}
A^{n+1} = \sum_{\edge \in \edgesint\m} |D_\edge|\ u_\edge^{n+1}\ \eth_\edge p^{n+1}\ \varphi_\edge^{n+1} \quad \mbox{and} \quad B^{n+1} = \sum_{K \in \mesh\m} |K|\ p_K^{n+1}\ \dive_K(\bfu^{n+1})\ \varphi_K^{n+1}.
\end{align*}
By Lemma \ref{lem:u-p-phi} below,
\[
A^{n+1} + B^{n+1} = \sum_{K \in \mesh\m}\ \sum_{\edge \in \edges(K)} |D_{K,\edge}|\ p_K^{n+1}\, u^{n+1}_\edge\ \dfrac{|\edge|\ (\varphi_K^{n+1} - \varphi_\edge^{n+1})}{|D_{K,\edge}|}\ \bfe^{(\edge)} \cdot \bfn_{K,\edge}.
\]
On each subcell $D_{K,\edge}$ the quantity
\[
\partial_\edge \varphi = \frac{|\edge|\ (\varphi_K^{n+1} - \varphi_\edge^{n+1})}{|D_{K,\edge}|}\ \bfe^{(\edge)} \cdot\bfn_{K,\edge}
\]
is, up to higher order terms, a discrete differential quotient of $\varphi$ between $\bfx_K$ and $\bfx_\edge$, in the direction $i$ if $\edge \in \edgesi$, which uniformly converges to $-\partial_i \varphi$ in the case of a rectangular grid, and therefore, 
\[
\sum_{n=0}^{N\m-1} \delta t\m\ ( A^{n+1} + B^{n+1}) \to - \int_0^T \int_\Omega \bar p(\bfx,t)\ \bar \bfu(\bfx,t) \cdot \gradi \varphi (\bfx,t) \dx \dt \quad \mbox{as } m \to +\infty.
\]
Now,
\[
\delta t\m |B^{0}| =\delta t\m\ \Bigl| \sum_{K \in \mesh\m} |K|\ p_K^0\ \dive_K(\bfu^0)\ \varphi_K^0 \Bigr|
\le g\ \frac {\delta t\m} {\inf_{\edge\in \edges\m} |\edge|} \Vert h_0\Vert_\infty^2\ \Vert \varphi \Vert_\infty \bigl(\sum_{\edge\in \edges} |D_\edge|\bigr)\ \Vert \bfu_0 \Vert_{\infty},
\]
so that, by the assumption \eqref{hyp:deltat}, $B^0 \to 0\mbox{ as } m\to +\infty$.
Note that the assumption \eqref{hyp:deltat} could be avoided if we assume $u_0 \in W^{1,1}(\Omega)^2$ or $u_0 \in BV(\Omega)^2$; indeed, in this case we have
\[
|B^{0}| \le g\ \Vert h_0\Vert_\infty^2\ \Vert \varphi \Vert_\infty\ \Vert \bfu_0 \Vert_{W^{1,1}(\Omega)}.
\]
However, the assumption \eqref{hyp:deltat} seems unavoidable to deal with the remainder term appearing in the discrete potential and kinetic energy balances.
Concerning the term $\mathcal D\m$, thanks to the regularity of $\varphi$ which implies that $|\varphi_\edge^n - \varphi_\edge^{n+1}|\leq c_\varphi\ \delta t\m$,
\[
|\mathcal D\m| \leq c_\varphi\ \sum_{n=0}^{N\m-1} \delta t\m\ \frac{\delta t\m}{\inf_{\edge\in \edges\m} |\edge|}
\ \sum_{\substack{\edge \in \edgesint\m,\\[0.3ex] \edge=K|L}} |D_\edge|\ u_\edge^{n+1}\, |p_K^{n+1} - p_L^{n+1}|,
\]
and we conclude that $\mathcal D\m$ tends to zero as $m$ tends to $+\infty$ thanks to the fact that the unknowns are assumed to be uniformly bounded.
Note that, for this convergence to hold, thanks to Lemma \ref{lem:translates}, we only need the ratio $\delta t\m/(\inf_{\edge\in \edges\m} |\edge|)$ to be bounded (and not tending to zero).
Combining these convergence results, we have
\begin{equation} \label{lim-p-entropie}
\mathcal P\m \to - \int_0^T \int_\Omega \bar p(\bfx,t) \ \bar \bfu(\bfx,t) \cdot \gradi \varphi (\bfx,t) \dx \dt \quad \mbox{as } m \to +\infty.
\end{equation}

%
%

\paragraph{\bf Bathymetry terms.}
As for the passage to the limit in the momentum balance equation, we introduce the following piecewise constant functions:
\begin{list}{--}{\itemsep=0.ex \topsep=0.5ex \leftmargin=1.cm \labelwidth=0.7cm \labelsep=0.3cm \itemindent=0.cm}
\item $\widetilde h\m$ is the piecewise constant function equal to $h_{\edge,c}^{n+1}=\frac 1 2 (h_K^{n+1}+h_L^{n+1})$ on each set $D_\edge \times (t_n, t_{n+1})$, for $\edge = K|L \in \edgesint\m$ and $n \in \llbracket 0,N\m -1\rrbracket$; thanks to the regularity of the mesh, the function $\widetilde h\m$ converges to $\bar h$ in $\xL^1(\Omega \times(0,T))$, and so, thanks to the assumed uniform bound on the discrete water heights, in $\xL^q(\Omega \times(0,T))$ for $1 \leq q < +\infty$.
\item the function $\eth\m_i z: \Omega \to \xR$ is defined by
\[
\eth_i\m z = \sum_{\substack{\edge \in \edgesint^{(m,i)} \\[0.2ex] \edge = K|L,\ \bfx_K < \bfx_L}} \frac {|\edge|}{|D_\edge|}\ \bigl( z(\bfx_L) - z(\bfx_K) \bigr)\ \characteristic_{D_\edge}.
\]
Since $z$ is a regular function, the sequence of functions $(\eth\m_i z)_\mnn$ converges uniformly to the derivative $\partial_i z$ of $z$ with respect to the $i$-th variable as $m\to +\infty$.
We denote $\gradi\m z = (\eth\m_1 z,\ \eth\m_2 z)^t$.
\end{list}
With these notations, we get that
\begin{multline} \label{limit-terme-z-kinetic}
\sum_{n=0}^{N\m-1} \delta t\m\ \sum_{\edge \in \edges\m} |D_\edge|\ h_{\edge,c}^{n+1}\ u_\edge^{n+1}\ \eth_\edge z\ \varphi_\edge^n
\\
= \int_0^{T-\delta t\m} \int_\Omega \widetilde h\m(\bfx,t)\ \bfu\m(\bfx,t+\delta t\m) \cdot \gradi\m z\m(\bfx)\ \varphi\m(\bfx,t) \dx \dt
\\
\to \int_0^T \int_\Omega \bar h(\bfx,t)\ \bar\bfu(\bfx,t) \cdot \gradi z (\bfx)\ \varphi(\bfx,t) \dx \dt \mbox{ as } m \to +\infty,
\end{multline}
since $\bfu\m$ converges to $\bar u$ in $\xL^q(\Omega \times(0,T))^2$, for $1 \leq q < +\infty$.

The second part of the term $\mathcal Z\m$ reads:
\begin{multline*}
\mathcal Z_2\m = g \sum_{n=0}^{N\m-1} \delta t\m \sum_{K\in \mesh\m} |K|\ z_K\ \big( (\eth_t h)_K^n + \dive_K(h^n \bfu^n) \big) \varphi_K^n
\\
= \int_0^T \int_\Omega \mathcal C\m_{\mbox{\tiny{MASS}}}(U\m)(\bfx,t)\ \bigl(z(\bfx)\ \varphi(\bfx,t)\bigr) \dx \dt + \mathcal R\m_z,
\end{multline*}
where $\mathcal C\m_{\mbox{\tiny{MASS}}}(U\m)$ is the convection operator of the mass balance equation and $\mathcal R\m_z$ reads
\[
\mathcal R\m_z = g \sum_{n=0}^{N\m-1} \delta t\m \sum_{K\in \mesh\m} |K|\ z_K\ \big( (\eth_t h)_K^n + \dive_K(h^n \bfu^n) \big)\ <(z_K-z)\,\varphi>_{K\times(t_n,t_{n+1})}.
\]
We have $<(z_K-z)\,\varphi>_{K\times(t_n,t_{n+1})}=<(z_K-z)\,\varphi_K^n>_{K\times(t_n,t_{n+1})} + <(z_K-z)\,(\varphi-\varphi^n_K)>_{K\times(t_n,t_{n+1})}$, where we recall that $\varphi_K^n = <\varphi_K>_{K\times(t_n,t_{n+1})}$; since $z_K$ is the value of the regular function $z$ at the mass center of $K$, the first term is bounded by $c\, h_K^2$, and the second one is bounded by $c\, h_K\, ( h_K+\delta t\m)$, with $c$ only depending on $\varphi$ and $z$.
The term $\mathcal R\m_z$ may thus be shown to tend to zero when $m$ tends to $+\infty$, using only the $\xL^\infty$ bounds of the discrete solutions and the CFL condition.
Thanks to the weak convergence of $C\m_{\mbox{\tiny{MASS}}}(U\m)$ which has already been proved when studying the consistency of the Euler scheme and thanks to the regularity of $z$, we get:
\begin{multline*}
\mathcal Z_2\m \to
- \int_\Omega g\ z(\bfx)\ h_0(\bfx)\ \varphi(\bfx, 0) \dx
-\int_0^T \int_\Omega g\ z(\bfx)\ \bar h(\bfx,t)\ \partial_t \varphi(\bfx,t) \dx \dt
\\
- \int_0^T \int_\Omega g \ \bar h(\bfx,t)\ \bar\bfu (\bfx,t) \cdot \gradi(z \varphi)(\bfx, t) \dx \dt \mbox{ as } m \to +\infty.
\end{multline*} 
Adding the assertion \eqref{limit-terme-z-kinetic} to this relation yields:	
\begin{multline} \label{limit-terme-z}
\mathcal Z\m \to
- \int_\Omega g\ z(\bfx)\ h_0(\bfx)\ \varphi(\bfx, 0) \dx
-\int_0^T \int_\Omega g\ z(\bfx)\ \bar h(\bfx,t)\ \partial_t \varphi(\bfx,t) \dx \dt
\\
- \int_0^T \int_\Omega g \ z(\bfx)\ \bar h(\bfx,t)\ \bar\bfu (\bfx,t) \cdot \gradi(\varphi)(\bfx, t) \dx \dt \mbox{ as } m \to +\infty.
\end{multline}
%
%

\paragraph{\bf Remainder terms.}
The remainder term $\mathcal R_k\m$ in \eqref{ineq:entropy-weak-rest} satisfies $\mathcal R_k\m \ge \mathcal R_{k,1}\m +\mathcal R_{k,2}\m$ with
\begin{align*} &
\mathcal R_{k,1}\m = - \frac 1 2\ \sum_{i=1}^2 \sum_{n=0}^{N\m-1} \delta t\m \sum_{\edge \in \edgesinti}\ \sum_{\edged \in \edgesd(D_\edge)} |\edged|\ \bfF_\edged^n \cdot \bfn_{\edge,\edged}
\ \bigl(u^n_{i,\edged} - u_{i,\edge}^n \bigr)^2\ \varphi_\edge^n,
\\ &
\mathcal R_{k,2}\m = \sum_{i=1}^2\sum_{n=0}^{N\m-1} \delta t\m \sum_{\edge \in \edgesinti}\ \sum_{\edged \in \edgesd(D_\edge)} |\edged|\ \bfF_\edged^n \cdot \bfn_{\edge,\edged}
\ \bigl(u^n_{i,\edged} - u_{i,\edge}^n \bigr) \bigl(u^{n+1}_{i,\edge} - u_{i,\edge}^n \bigr)\ \varphi_\edge^n.
\end{align*}
Reordering the sums in the first term, we get
\[
\mathcal R_{k,1}\m = - \frac 1 2\ \sum_{i=1}^2 \sum_{n=0}^{N\m-1} \delta t\m \sum_{\substack{\edged \in \edgesdint\ei,\ \edged = \edge|\edge'}}\ |\edged|\ \bfF_\edged^n \cdot \bfn_{\edge,\edged}
\ \Bigl( \bigl(u^n_{i,\edged} - u_{i,\edge}^n \bigr)^2 - \bigl(u^n_{i,\edged} - u_{i,\edge'}^n \bigr)^2 \Bigr)\ \varphi_\edge^n.
\]
Supposing, without loss of generality, that the pair $(\edge,\edge')$ is ordered in such a way that $\bfF_\edged^n \cdot \bfn_{\edge,\edged} \geq 0$, we have $u_{i,\edged} -u_{i,\edge} = (1-\mu^\edge_\edged)\ (u_{i,\edge'} -u_{i,\edge})$ and 
$u_{i,\edged} -u_{i,\edge'} = \mu^\edge_\edged\ (u_{i,\edge} -u_{i,\edge'})$ with $\mu^\edge_\edged \in [\frac 1 2 , 1]$ by the assumption \eqref{muscl:entropie}.
Hence,
\[
\bigl(u^n_{i,\edged} - u_{i,\edge}^n \bigr)^2 - \bigl(u^n_{i,\edged} - u_{i,\edge'}^n \bigr)^2 = (1-2\,\mu^\edge_\edged)\ (u_{i,\edge'} -u_{i,\edge})^2 \leq 0
\]
and $\mathcal R_{k,1}\m \geq 0$.
Furthermore,
\[
\begin{array}{ll}
|\mathcal R_{k,2}\m| 
& \displaystyle
\leq \Vert \varphi \Vert_\infty\ \Vert h\m \Vert_\infty\ \vert\bfu\m \Vert_\infty^2\ \sum_{i=1}^2\ \sum_{n=0}^{N\m-1} \delta t\m \sum_{\edge \in \edgesinti}\ \bigl(\sum_{\edged \in \edgesd(D_\edge)} |\edged|\bigr)
\ \bigl| u^{n+1}_{i,\edge} - u_{i,\edge}^n \bigr|
\\[4ex] & \displaystyle
\leq \Vert \varphi \Vert_\infty\ \Vert h\m \Vert_\infty\ \Vert \bfu\m \Vert_\infty^2\ \frac{\delta t\m}{\min_{\edge \in \edges\m}|\edge|}\ C_\theta
\ \sum_{i=1}^2\ \sum_{n=0}^{N\m-1} \sum_{\edge \in \edgesinti}\ |D_\edge|\ \bigl| u^{n+1}_{i,\edge} - u_{i,\edge}^n \bigr|,
\end{array}
\]
where $C_\theta$ only depends on the parameter $\theta$ measuring the regularity of the sequence of meshes.
Thanks to the BV estimate \eqref{hyp:BVt} on the discrete velocities and the assumption \eqref{hyp:deltat} on the time step, the remainder term $\mathcal R_{k,2}\m$ tends to zero when $m$ tends to $+\infty$, and
\begin{equation} \label{err-kinlim}
\lim_{m\to +\infty} \mathcal R_{k} \ge 0.
\end{equation}
The remainder $\mathcal R_p\m$ satisfies $\mathcal R_p\m \ge \mathcal R_{p,1}\m + \mathcal R_{p,2}\m$, with
\[
\begin{array}{l} \displaystyle
\mathcal R_{p,1}\m = - \frac g 2\ \sum_{n=0}^{N\m-1} \delta t\m \sum_{K\in \mesh\m}\ \sum_{\edge \in \edges(K)} |\edge|\ (h^n_\edge - h_K^n)^2\ \bfu_\edge^n \cdot \bfn_{K,\edge}\ \varphi_K^n,
\\[2ex] \displaystyle
\mathcal R_{p,2}\m = g\ \sum_{n=0}^{N\m-1} \delta t\m \sum_{K\in \mesh\m}\ \sum_{\edge \in \edges(K)}
|\edge|\ (h^{n+1}_K - h_K^n)\ h_\edge^n\ \bfu_\edge^n \cdot \bfn_{K,\edge}\ \varphi_K^n.
\end{array}
\]
By the same arguments as for $\mathcal R_{k,1}\m$, with $\lambda^K_\edge \in [\frac 1 2 , 1]$ instead of $\mu^\edge_\edged \in [\frac 1 2 , 1]$, we get that $\mathcal R_{p,1}\m \geq 0$.
Similarly, the remainder $\mathcal R_{p,2}\m$ is shown to tend to zero when $m$ tends to $+\infty$ following the same lines as for $\mathcal R_{k,2}\m$, using the assumed BV estimate \eqref{hyp:BVt} for the discrete heights instead of the velocities and, once again, Assumption \eqref{hyp:deltat}.
Hence,
\begin{equation} \label{limRP}
\lim_{m\to +\infty} \mathcal R_p\m \ge 0.
\end{equation}
%
%

\paragraph{\bf Conclusion of the proof.} --
Owing to \eqref{err-kinlim} and \eqref{limRP}, passing to the limit in \eqref{ineq:entropy-weak-rest} as $m\to +\infty$ yields, together with \eqref{convergence-Ckin}, \eqref{convergence-Cpot}, \eqref{lim-p-entropie} and \eqref{limit-terme-z}, that the limit $(\bar h, \bar u)$ satisfies the weak entropy inequality \eqref{eq:weakentropy}.
\end{proof}

The next lemma, used to pass to the limit in the pressure terms of the entropy balance, is the discrete equivalent, on a staggered grid, of the formal equality $\displaystyle \int_\Omega (\bfu \cdot \gradi p \ \varphi + p \ \dive \bfu \ \varphi ) \dx= -\int_\Omega p \ \bfu \cdot \gradi \varphi \dx$.

\begin{lemma} \label{lem:u-p-phi}
Let $(\mesh, \edges)$ be a $d$-dimensional MAC discretisation of $\Omega$ in the sense of Definition \ref{def:MACgrid}.
Let $(p_K)_{K\in \mesh}$ and $(\bfu_\edge)_{\edge \in \edges}$ be the associated pressure and velocity discrete unknowns, and let $(\varphi_K)_{K\in\mesh}$ and $(\varphi_\edge)_{\edge\in\edges}$ be two families of real number.
Recall that an edge $\edge$ is orthogonal to a vector of the canonical basis of $\xR^2$, which we denote by $\bfe^{(\edge)}$, that $\bfu_\edge$ is colinear to $\bfe^{(\edge)}$ and let us define $u_\edge = \bfu_\edge \cdot \bfe^{(\edge)}$.
Then
\[
\sum_{\substack{\edge \in \edgesint,\\ \edge = K|L}} |D_\edge|\ u_\edge \ \eth_\edge p \ \varphi_\edge
+ \sum_{K \in \mesh} |K| \ p_K \ \dive_K \bfu \ \varphi_K
= \sum_{K \in \mesh} \sum_{\edge \in \edges(K)} |D_{K,\edge}| \ p_K \ u_\edge\ \dfrac{|\edge|(\varphi_K - \varphi_\edge)}{|D_{K,\edge}|}\ \bfn_{K,\edge} \cdot \bfe^{(\edge)}.
\]
\end{lemma}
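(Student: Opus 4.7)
The plan is to prove this identity by straightforward algebraic manipulation, working from the definitions of $\dive_K$ and $\eth_\edge$ and reorganising the sums from cells to edges; the formula is really a discrete integration by parts, so no analytic tool is needed, only careful bookkeeping of signs.

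For each interior edge $\edge = K|L$, I fix the orientation of \eqref{eq:grad}, namely $(\bfx_K)_i < (\bfx_L)_i$ for $\edge\in\edgesinti$, so that $\nu_{K,\edge} := \bfn_{K,\edge}\cdot\bfe^{(\edge)}=+1=-\nu_{L,\edge}$. First I would expand the second term on the left using \eqref{eq:div}, exchange the order of summation, and note that boundary edges carry $u_\edge=0$ by impermeability; this yields
\[
\sum_{K\in\mesh} |K|\ p_K\ (\dive_K \bfu)\ \varphi_K = \sum_{\edge=K|L\in\edgesint} |\edge|\ u_\edge\ \bigl(p_K\varphi_K - p_L\varphi_L\bigr).
\]
Simultaneously, unfolding the first term by means of \eqref{eq:grad} gives
\[
\sum_{\edge=K|L\in\edgesint} |D_\edge|\ u_\edge\ (\eth_\edge p)\ \varphi_\edge = \sum_{\edge\in\edgesint} |\edge|\ u_\edge\ (p_L - p_K)\ \varphi_\edge.
\]
Adding and factoring edge by edge, the entire left-hand side rearranges into
\[
\sum_{\edge=K|L\in\edgesint} |\edge|\ u_\edge\ \bigl[p_K(\varphi_K-\varphi_\edge) - p_L(\varphi_L-\varphi_\edge)\bigr].
\]

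Second, I would unfold the right-hand side in the opposite direction: cancel $|D_{K,\edge}|$ between numerator and denominator, drop boundary terms (again $u_\edge=0$), and group the double sum over $(K,\edge)$ pairs by edge. Each interior edge $\edge=K|L$ then contributes $|\edge|\,u_\edge\bigl[p_K(\varphi_K-\varphi_\edge)\,\nu_{K,\edge} + p_L(\varphi_L-\varphi_\edge)\,\nu_{L,\edge}\bigr]$, and with $\nu_{L,\edge}=-\nu_{K,\edge}=-1$ this coincides with the expression just derived for the left-hand side, completing the proof.

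I do not expect a genuine obstacle here: the whole argument is an exercise in rewriting sums, and no inequality, no regularity, no passage to the limit is involved. The one point that must be handled carefully — and which is the only place a mistake can creep in — is the sign convention, that is, consistently relating $\bfn_{K,\edge}\cdot\bfe^{(\edge)}$ on both sides of an edge and matching it with the orientation used in the definition \eqref{eq:grad} of $\eth_\edge p$.
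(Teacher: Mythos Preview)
Your proposal is correct and follows essentially the same route as the paper: both arguments are a discrete integration by parts obtained by expanding $\eth_\edge p$ and $\dive_K\bfu$ from their definitions and reordering the resulting sums. The only cosmetic difference is that the paper keeps the sums indexed by cells $\sum_{K}\sum_{\edge\in\edges(K)}$ throughout (so the sign $\bfn_{K,\edge}\cdot\bfe^{(\edge)}$ is carried along and the right-hand side is obtained directly), whereas you first regroup by interior edges, fix an orientation, and then match the two sides edge by edge; both bookkeepings are equivalent.
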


\begin{proof}
Let us denote by $A$ and $B$ the first and second terms of the left hand side.
Then, with the definition \eqref{eq:grad} of the discrete gradients,
\[
A = - \sum_{K \in \mesh} p_K \sum_{\edge \in \edges(K)} |\edge|\ u_\edge\ \varphi_\edge\ \bfe^{(\edge)} \cdot \bfn_{K,\edge}.
\]
By the definition of the discrete divergence,
\[
B = \sum_{K \in \mesh} p_K \sum_{\edge \in \edges(K)} |\edge|\ u_\edge\ \varphi_K\ \bfe^{(\edge)} \cdot \bfn_{K,\edge}.
\]
Adding these two relations yields
\[
A + B = \sum_{K \in \mesh} p_K \sum_{\edge \in \edges(K)} |\edge|\ u_\edge \ (\varphi_K-\varphi_\edge)\ \bfe^{(\edge)} \cdot \bfn_{K,\edge},
\]
and the proof is complete.
\end{proof}
%
%
\section{Numerical results} \label{sec:num}

This section is devoted to numerical tests: we first check the order of convergence of the proposed scheme on a two-dimensional regular solution (Section \ref{subsec:num_smooth}); then we turn to one-dimensional and two-dimensional shock solutions on a plane topography (Sections \ref{subsec:1D_Riemann} and \ref{subsec:2D_Riemann}); in Section \ref{subsec:part_dam_break}, we address a two-dimensional dam-break problem in a closed computational domain with a variable topography, which, in particular, shows the ability of staggered schemes to "natively" cope with reflection boundary conditions; finally, we compute the motion of a liquid slug over a partly dry support (Section \ref{subsec:drop}).

We compare three schemes: the second-order scheme developed here, the scheme referred to in Section \ref{sec:Euler-sch} as the segregated forward Euler scheme (combining a segregated forward Euler scheme in time and the proposed MUSCL-like discretisation \eqref{implementation-calif} of the convection fluxes) and a first order scheme which still features the segregated forward Euler scheme in time but with first-order upwind convection fluxes.
These schemes are referred to in the following as the {\em second-order}, {\em segregated} and {\em first-order} scheme respectively.
Even though we only have a theoretical proof of weak entropy consistency for the first order in time schemes, throughout this numerical study, we never observed any sign of a possible convergence of the second order in time scheme to a non-entropy weak solution. 

The schemes have been implemented within the CALIF$^3$S open-source software \cite{califs} of the French Institut de S\^uret\'e et de Radioprotection Nucl\'eaire (IRSN); this software is used for the following tests.
%
%
\subsection{A smooth solution} \label{subsec:num_smooth}

We begin here by checking the accuracy of the scheme on a known regular solution consisting in a travelling vortex.
This solution is obtained through the following steps: we first derive a compact-support $H^2$ solution consisting in a standing vortex, then we make it time-dependent by adding a constant velocity translation to the reference frame.
The velocity field of the standing vortex and the pressure are sought under the form:
\[
\hat \bfu = f(\xi) \begin{bmatrix} -x_2 \\ x_1 \end{bmatrix},
\quad \hat p=\wp(\xi),
\]
with $\xi = x_1^2 + x_2^2$.
A simple derivation of these expressions yields:
\[
\hat \bfu \cdot \gradi \hat \bfu = -f(\xi)^2 \begin{bmatrix} x_1 \\ x_2 \end{bmatrix}
\]
and
\[
\quad \gradi \hat p = 2\, \wp'(\xi) \begin{bmatrix} x_1 \\ x_2 \end{bmatrix}.
\]
Using the relation $p = \frac 1 2 g h^2$, we thus obtain a stationary solution of the SWE \eqref{eq:sw} with a topography $z=0$ if $\wp$ satisfies $8\,g\, \wp=(F +c)^2$, where $F$ is such that $F'=f^2$, $F(0)=0$ and $c$ is a positive real number.
For the present numerical study, we choose $f(\xi)=10\,\xi^2 (1-\xi)^2$ if $\xi \in (0,1)$, $f=0$ otherwise, which indeed yields an $H^2(\xR^2)$ velocity field (note that, consequently, the pressure and the water height are also regular), and $c=1$.
The problem is made unsteady by adding a uniform translation: given a constant vector field $\bfa$, the pressure $p$ and the velocity $\bfu$ are deduced from the steady state solution $\hat p$ and $\hat \bfu$:
\[
h (\bfx,t)=\hat h(\bfx-\bfa t), \qquad \bfu(\bfx,t)=\hat \bfu(\bfx-\bfa t)+\bfa.
\]
The center of the vortex is initially located at $\bfx_0=(0,0)^t$, the translation velocity $\bfa$ is set to $\bfa=(1,1)^t$, the computational domain is $\Omega=(-1.2,\,2.)^2$ and the computation is run on the time interval $(0,0.8)$.

Computations are performed with successively refined meshes with square cells, and the time step is $\delta t = \delta_\mesh /8$, and corresponds to a Courant (or CFL) number with respect to the celerity of the fastest waves close to $1/3$.
The discrete $\xL^1$-norm of the difference between the exact solution and the solution obtained by the second-order scheme is given in Table \ref{tab:err}.
The observed order of convergence over the whole sequence is $2$ for the water height and $1.5$ for the velocity.
Results with the first-order scheme are given in Table \ref{tab:err_first}; one observes that the second-order scheme is much more accurate.
Finally, the segregated scheme yields good results on coarse meshes (it is the most accurate scheme on the $32\times32$ mesh); unfortunately, when refining the mesh, oscillations appear, and the convergence is lost.
This results confirms a behaviour already observed for the transport operator in \cite{pia-13-for}: for multi-dimensional problems, the smoothing produced by the Heun time-stepping seems to be necessary to compensate the oscillatory character of the MUSCL scheme (which, for the transport operator, does not lead, of course, to violate the local maximum principle warranted by construction of the limitation process).

\begin{table}[htb]
\centering
\begin{tabular}{|c|c|c|c|c|} \hline
 mesh & error($h$) & ord($h$) & error($u$) & ord($u$)
\\ \hline \rule[-0.7ex]{0ex}{3.3ex}
$32 \times 32$   & $3.61\,10^{-3}$ & /      & $2.93\,10^{-1}$ & /      \\
$64 \times 64$   & $1.15\,10^{-3}$ & $1.65$ & $1.14\,10^{-1}$ & $1.36$ \\
$128 \times 128$ & $2.58\,10^{-4}$ & $2.16$ & $4.06\,10^{-2}$ & $1.49$ \\
$256 \times 256$ & $5.85\,10^{-5}$ & $2.14$ & $1.49\,10^{-2}$ & $1.45$ \\
$512 \times 512$ & $1.53\,10^{-5}$ & $1.93$ & $4.67\,10^{-3}$ & $1.68$ \\
\hline \end{tabular}\\[3ex]
\centering\caption{Measured numerical errors for the travelling vortex -- Discrete $\xL^1$-norm of the difference between the numerical and exact solution at $t=0.8$, for the height and the velocity, and corresponding order of convergence.} \label{tab:err}
\end{table}

\begin{table}[htb]
\centering
\begin{tabular}{|c|c|c|c|c|} \hline
 mesh & error($h$) & ord($h$) & error($u$) & ord($u$)
\\ \hline \rule[-0.7ex]{0ex}{3.3ex}
$32 \times 32$   & $8.04\,10^{-3}$ & /      & $6.55\,10^{-1}$ & /      \\
$64 \times 64$   & $5.56\,10^{-3}$ & $0.53$ & $4.84\,10^{-1}$ & $0.44$ \\
$128 \times 128$ & $3.53\,10^{-3}$ & $0.66$ & $3.22\,10^{-1}$ & $0.59$ \\
$256 \times 256$ & $2.08\,10^{-3}$ & $0.76$ & $1.96\,10^{-1}$ & $0.72$ \\
$512 \times 512$ & $1.15\,10^{-3}$ & $0.85$ & $1.16\,10^{-1}$ & $0.76$ \\
\hline \end{tabular}\\[3ex]
\centering\caption{Measured numerical errors for the travelling vortex with the first order scheme - Discrete $\xL^1$-norm of the difference between the numerical and exact solution at $t=0.8$, for the height and the velocity, and corresponding order of convergence.} \label{tab:err_first}
\end{table}

\begin{table}[htb]
\centering
\begin{tabular}{|c|c|c|} \hline
 mesh & error($h$) & error($u$)
\\ \hline \rule[-0.7ex]{0ex}{3.3ex}
$32 \times 32$   & $2.06\,10^{-3}$ & $2.33\,10^{-1}$  \\
$64 \times 64$   & $1.37\,10^{-3}$ & $1.18\,10^{-1}$  \\
$128 \times 128$ & $1.24\,10^{-3}$ & $8.50\,10^{-2}$  \\
$256 \times 256$ & $1.26\,10^{-3}$ & $6.16\,10^{-2}$  \\
$512 \times 512$ & $1.56\,10^{-3}$ & $4.85\,10^{-2}$  \\
\hline \end{tabular}\\[3ex]
\centering\caption{Measured numerical errors for the travelling vortex with the segregated scheme - Discrete $\xL^1$-norm of the difference between the numerical and exact solution at $t=0.8$, for the height and the velocity.} \label{tab:err_stagg}
\end{table}
%
%
%
\subsection{A Riemann problem} \label{subsec:1D_Riemann}

We now turn to a one-dimensional shock solution, corresponding to a Riemann problem posed over $\Omega = (0,1)$.
The initial height is $h=1$ if $x< 0.5$ and $h=0.2$ otherwise, and the topography $z$ is set to zero over the computational domain; the fluid is initially at rest.
The solution consists in a 1-rarefaction wave and a 2-shock.

We plot on Figure \ref{fig:Rpb1} and Figure \ref{fig:Rpb2} the results obtained a $t=0.1$ with the second-order scheme, the segregated scheme and the first-order scheme.
The space step is $\delta x= 1/200$ and the time step is chosen as $\delta t= \delta x/10$, which corresponds to a CFL number lower than 0.5 with respect to the waves celerity (the maximal speed of sound is close to 3 and the maximal velocity is close to 2).
As expected, the first order scheme is more diffusive than the other ones.
As in the previous test, the segregated forward Euler scheme (with MUSCL fluxes) exhibits some oscillations, which are damped by the Heun time discretisation (see the Figure \ref{fig:Rpb2}).
In this test case, for both the second-order and the segregated scheme, the shock is captured with only one intermediate cell between the left and the right state.

\begin{figure}[htbp]
\includegraphics[width=0.75\textwidth]{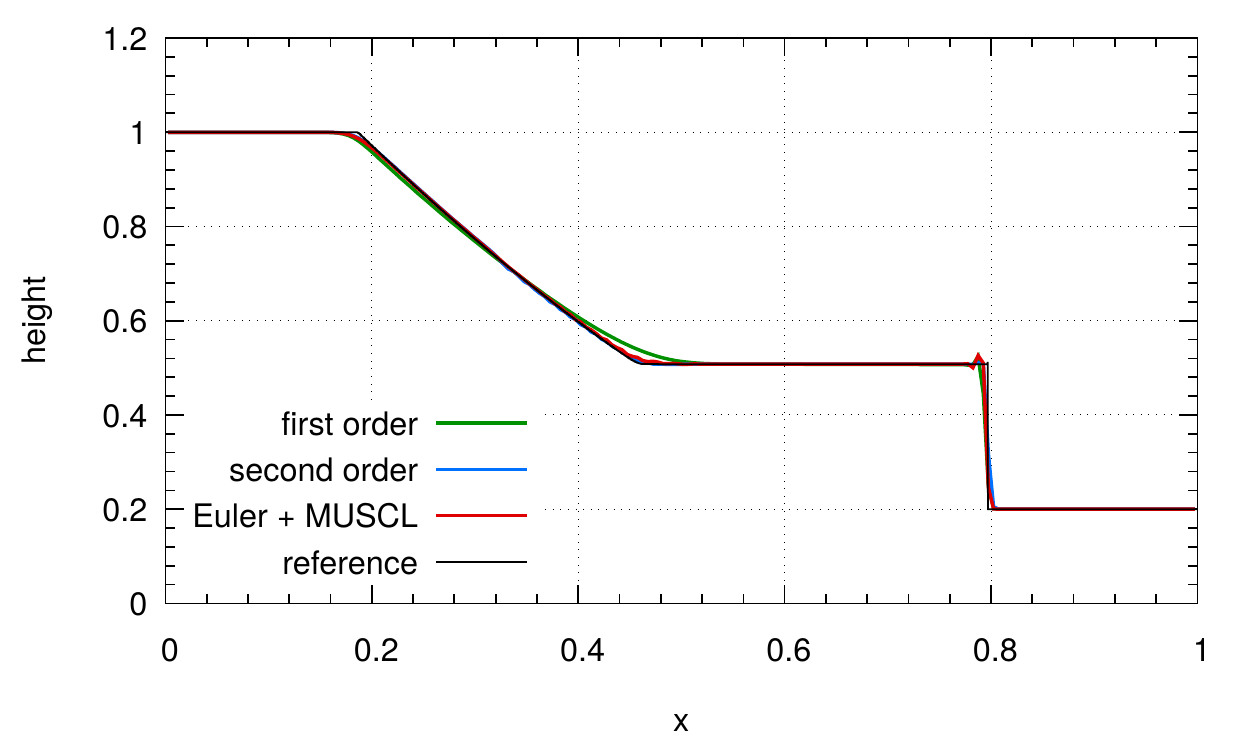}\\
\includegraphics[width=0.75\textwidth]{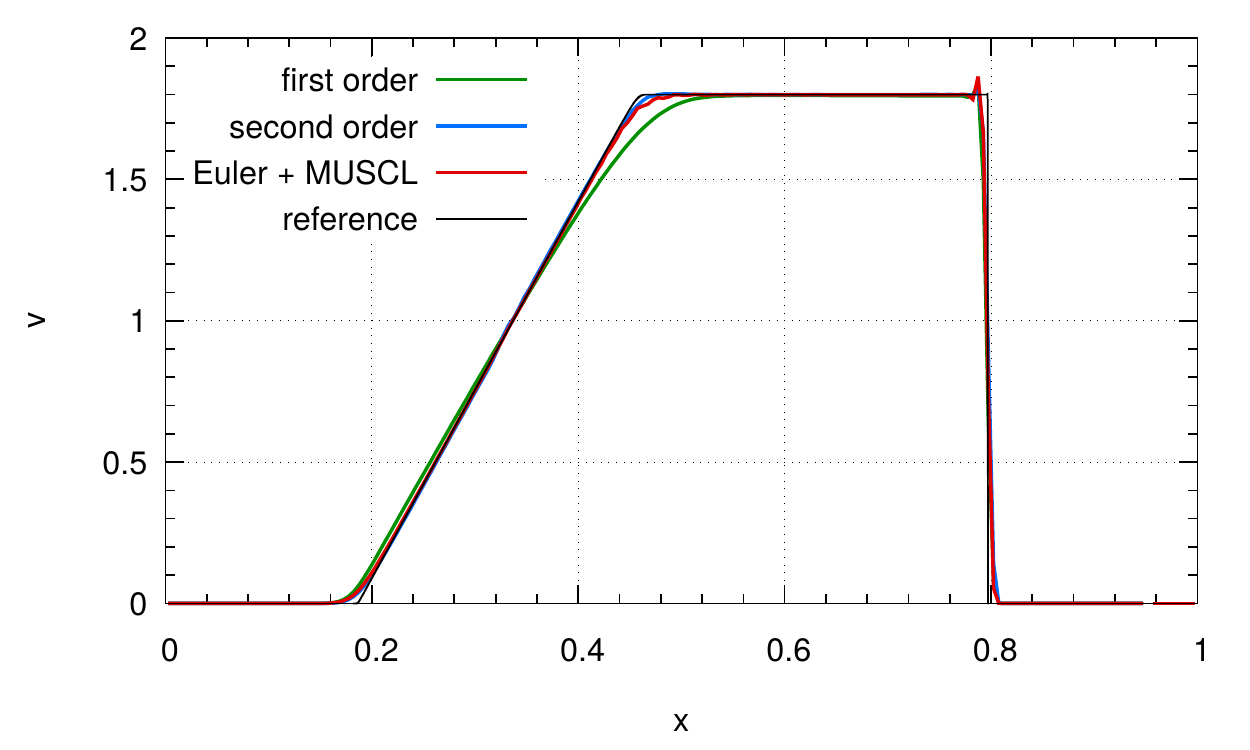}
\caption{Riemann problem. Top: flow height -- Bottom: velocity.}
\label{fig:Rpb1}
\end{figure}

\begin{figure}[htbp]
\includegraphics[width=0.65\textwidth]{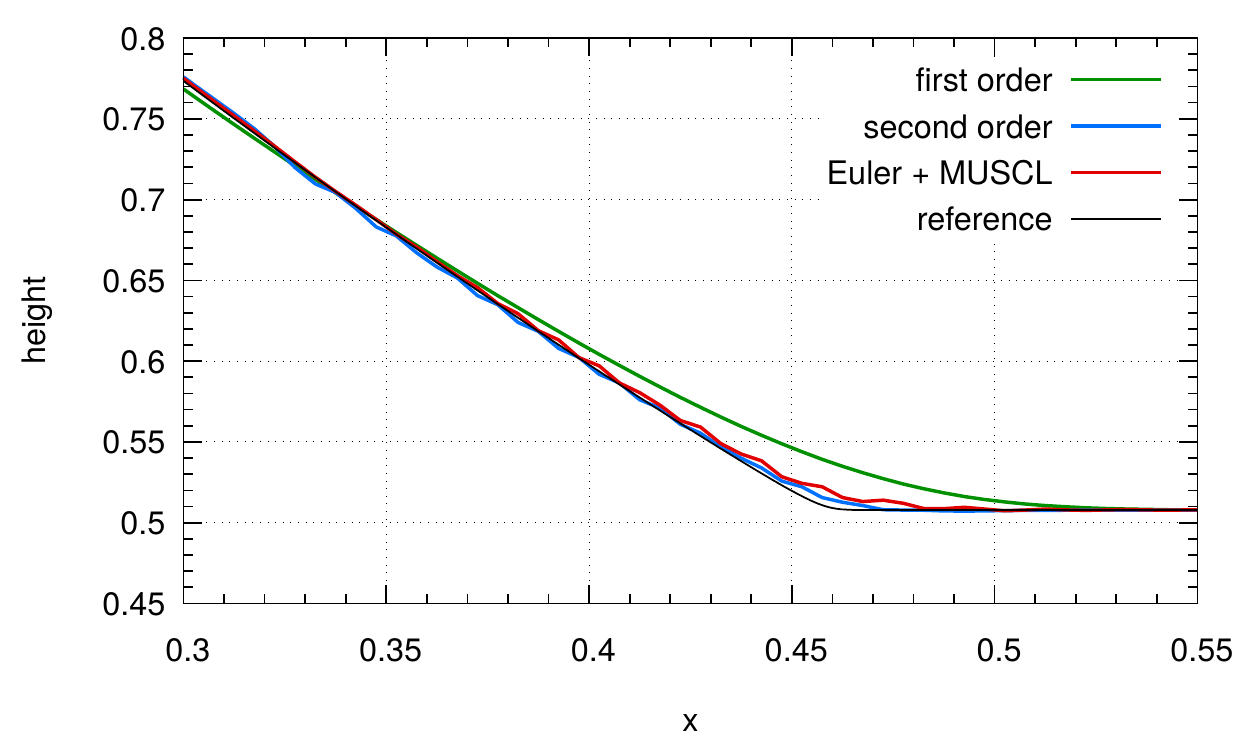}\\
\includegraphics[width=0.65\textwidth]{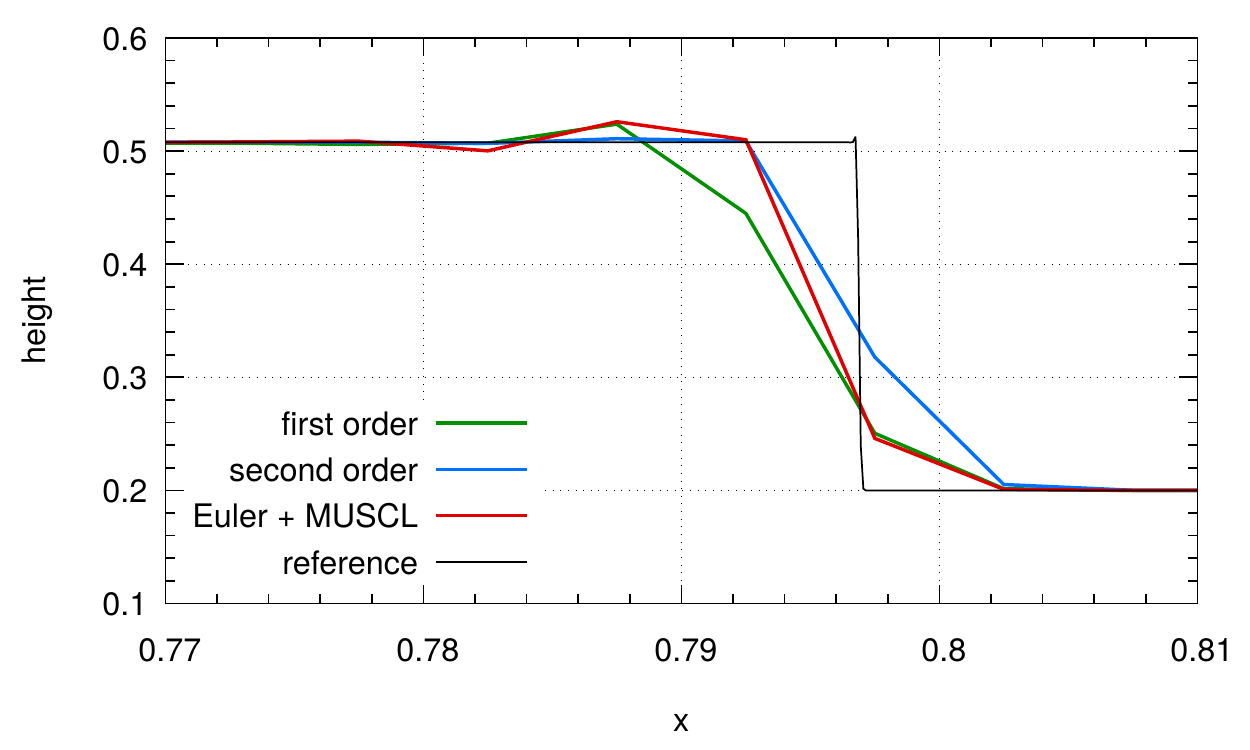}
\caption{Riemann problem. Details of the flow height.}
\label{fig:Rpb2}
\end{figure}
%
%

\subsection{A circular dam break problem} \label{subsec:2D_Riemann}

The objective of this test-case is to check the capability of the scheme to capture a multi-dimensional shock solution.
The fluid is initially at rest and the height is given by:
\[
h=2.5 \mbox{ if } r < 2.5,\ h=0.5 \mbox{ otherwise, with } r^2=x_1^2+x_2^2.
\]
The computational domain is $\Omega=(-20,20)\times(-20,20)$ and the final time is $T=4.7$.

We plot on Figure \ref{fig:cdb1} the results obtained with a $800 \times 800$ uniform mesh, with the second-order scheme.
The time-step is $\delta t=h_\mesh/10$ (with a maximal velocity in the range of $3.5$ and a maximal speed of sound in the range of $5$).
In addition, to cure some oscillations (see Figure \ref{fig:cdb3}), we add a slight stabilization in the momentum balance equation which consists in adding to the discrete momentum equation associated to an edge $\edge$ the following flux through any dual edge of $D_\edge$, with $\edged = D_\edge |D_{\edge'}$:
\[
F_{{\rm stab},\edge, \edged}= \zeta\ h_\edged\ \delta_\edged^{d-1}\ (u_\edge-u_\edge'),
\]
where $\zeta$ is a user-defined parameter, $h_\edged$ and $\delta_\edged$ are quantities representative of the fluid height and of the space step in the neighbourhood of $\edged$, respectively, and $d$ is the space dimension ($d=2$ in this test).
Here, $\zeta=0.1$, which is significantly lower than the diffusion generated by the use of an upwind scheme in the momentum balance equation; indeed, the upwind scheme may be seen as the centered one complemented by a diffusion taking the same expression as $F_{{\rm stab},\edge, \edged}$ with $\zeta\ h_\edged$ replaced by $|F_{\edge,\edged}|/2$.
The interest of this stabilization stems from the fact that the numerical diffusion introduced in the present family of schemes depends on the material velocity (and not on the waves celerity as, for instance, in colocated schemes based on Riemann solvers), and is sometimes too low in the zones where the fluid is almost at rest \cite{her-18-con}.
Note that, as a counterpart, the scheme does not become overdiffusive for low-Mach number flows.
For the same computation, we give on Figure \ref{fig:cdb2} the height and the radial velocity along the axis $x_2=0$ ({\it i.e.}\ the first component of the velocity) at different times.

This computation is also used as "reference computation" on Figure \ref{fig:cdb3}, where we compare the results obtained at $t=3T/5$ with a $200 \times 200$ mesh with the second-order scheme, the second-order scheme with stabilization and the first-order scheme.
This latter is significantly more diffusive, and we observe how the stabilization (even if added to the momentum balance only and not on the mass balance) damps the oscillations obtained with the second-order scheme for both the flow height and the velocity.

\begin{figure}[htbp]
\includegraphics[width=0.45\textwidth, clip=true, bb= 5cm 1cm 33.5cm 26.cm]{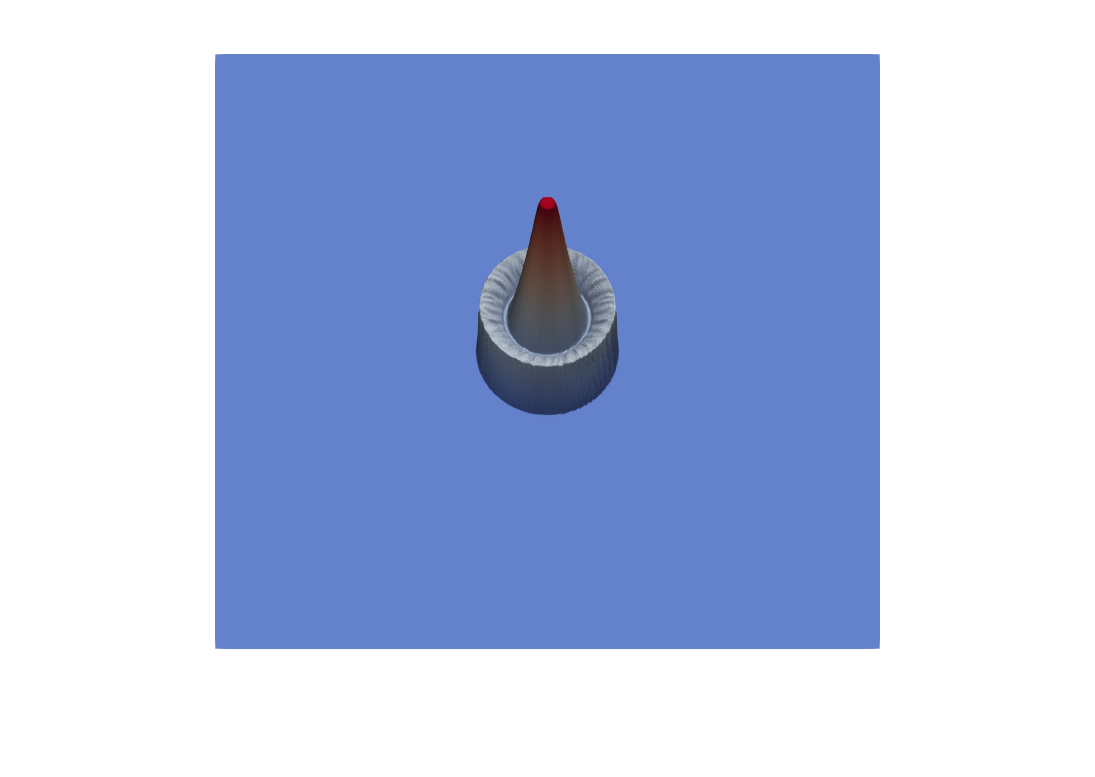}
\includegraphics[width=0.45\textwidth, clip=true, bb= 5cm 1cm 33.5cm 26.cm]{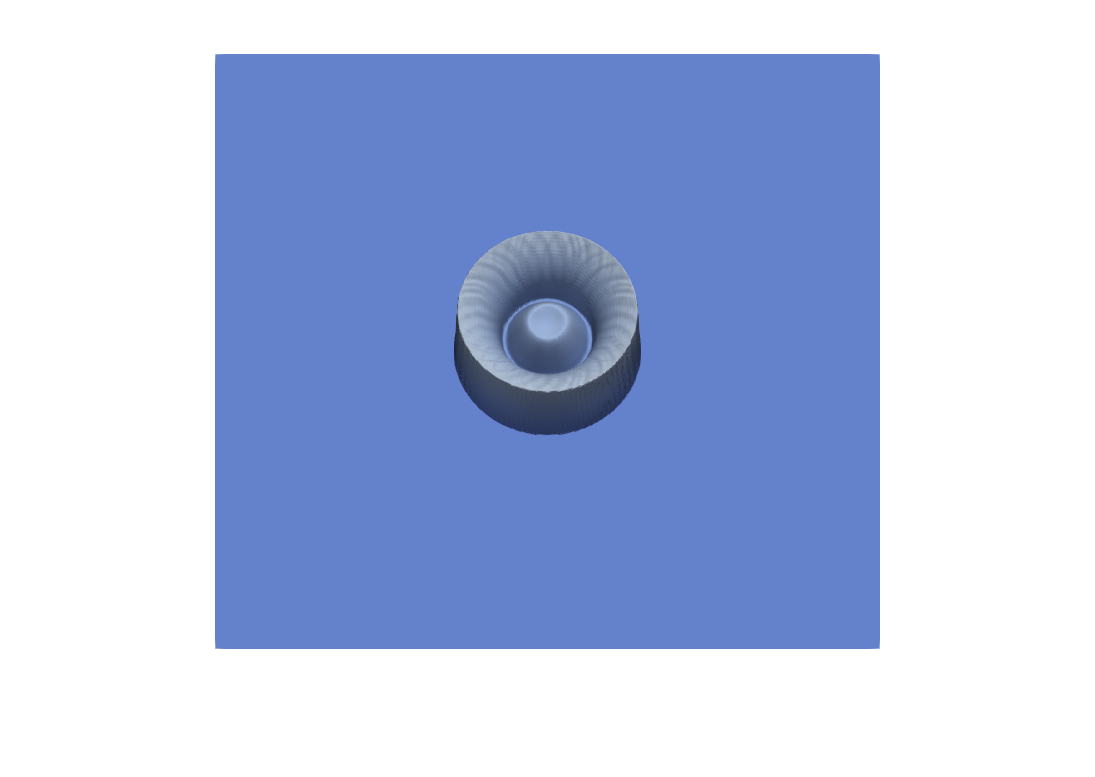}\\[1ex]
\includegraphics[width=0.45\textwidth, clip=true, bb= 5cm 1cm 33.5cm 26.cm]{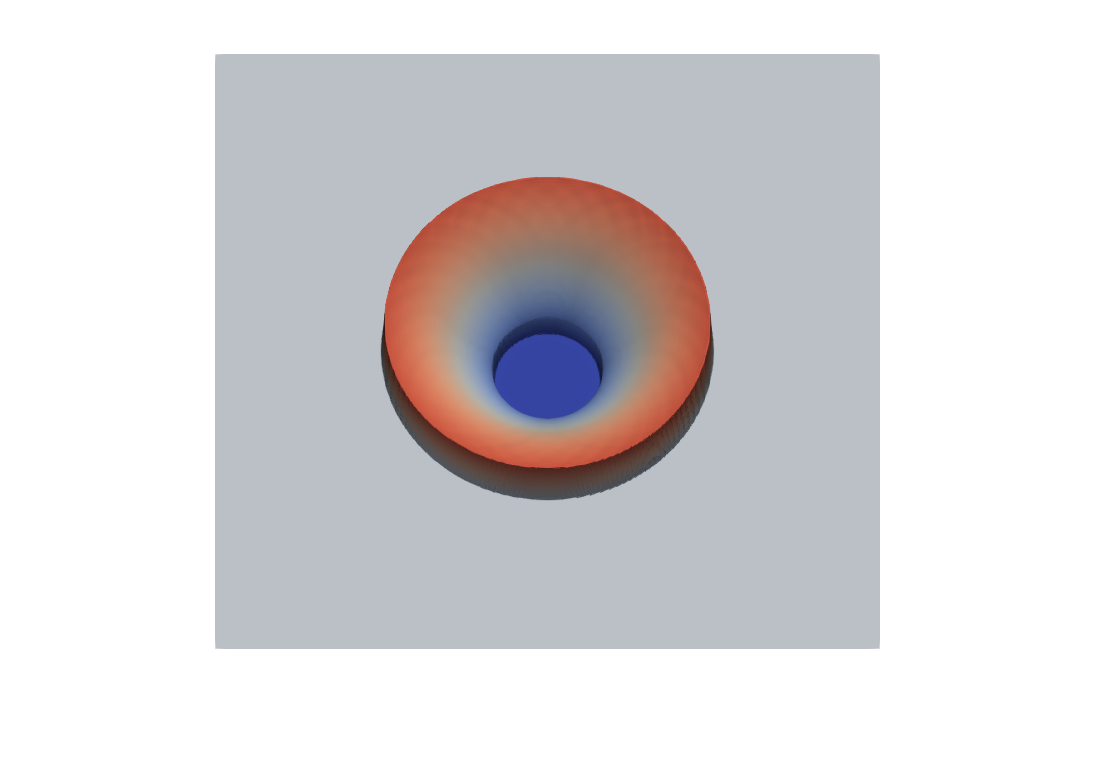}
\includegraphics[width=0.45\textwidth, clip=true, bb= 5cm 1cm 33.5cm 26.cm]{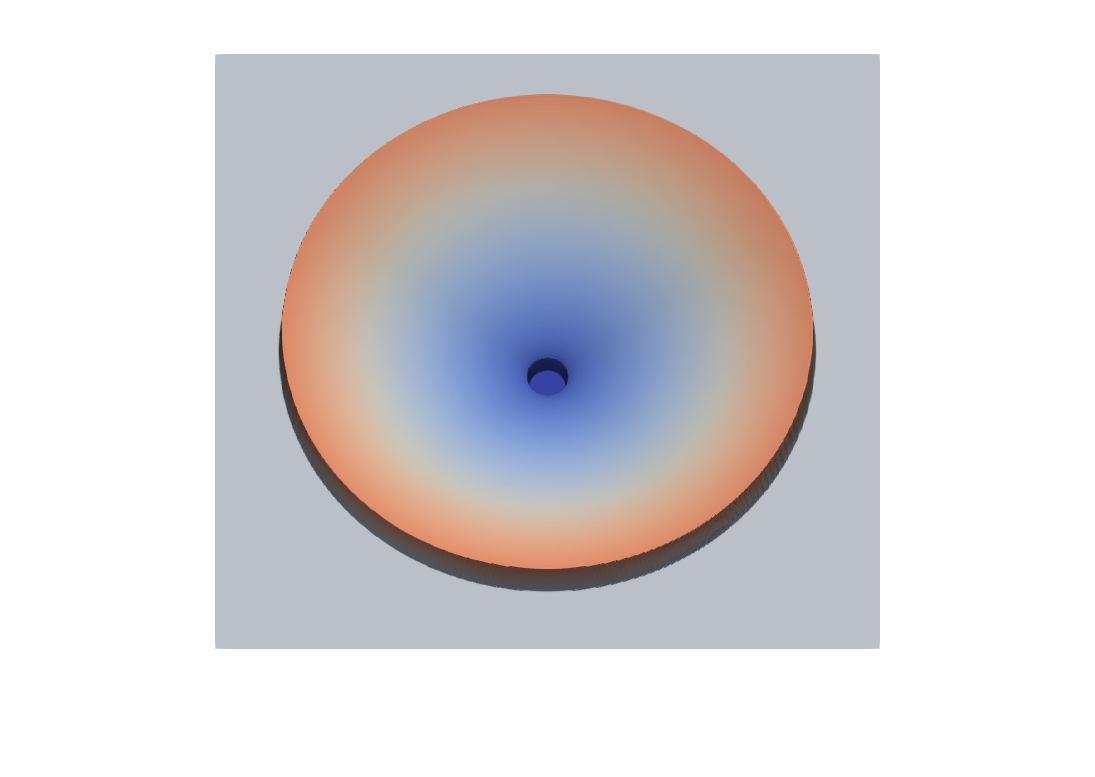}\\[1ex]
\includegraphics[width=0.45\textwidth, clip=true, bb= 5cm 1cm 33.5cm 26.cm]{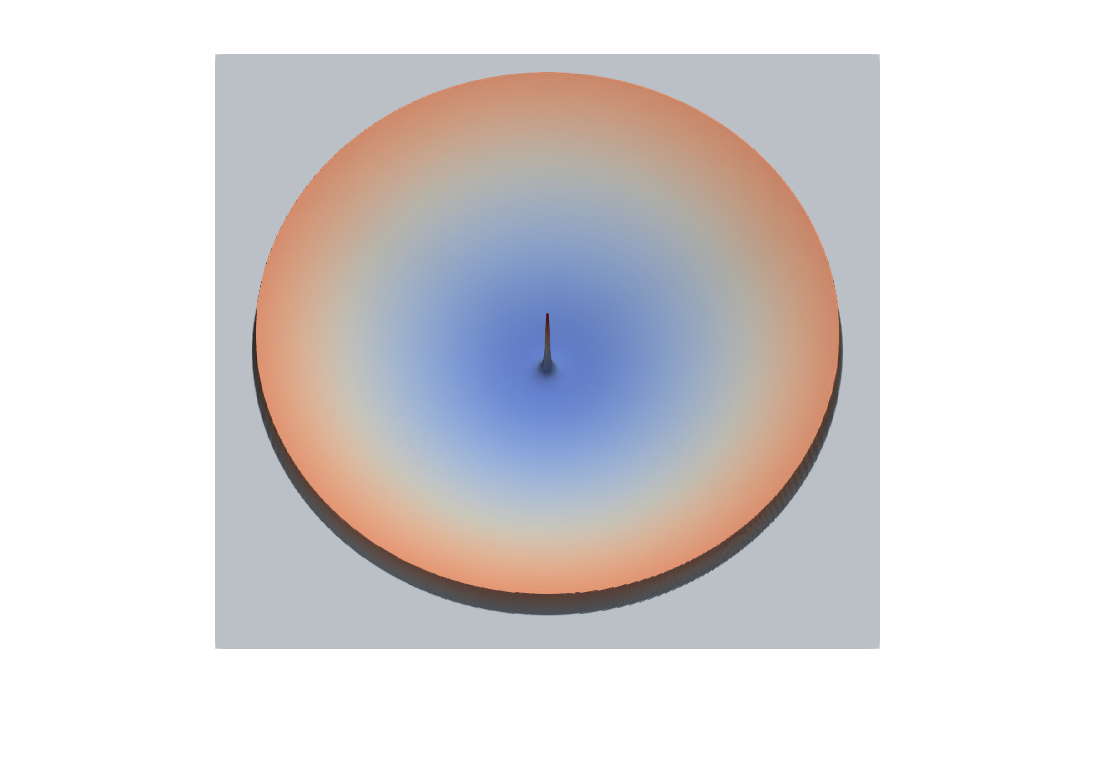}
\includegraphics[width=0.45\textwidth, clip=true, bb= 5cm 1cm 33.5cm 26.cm]{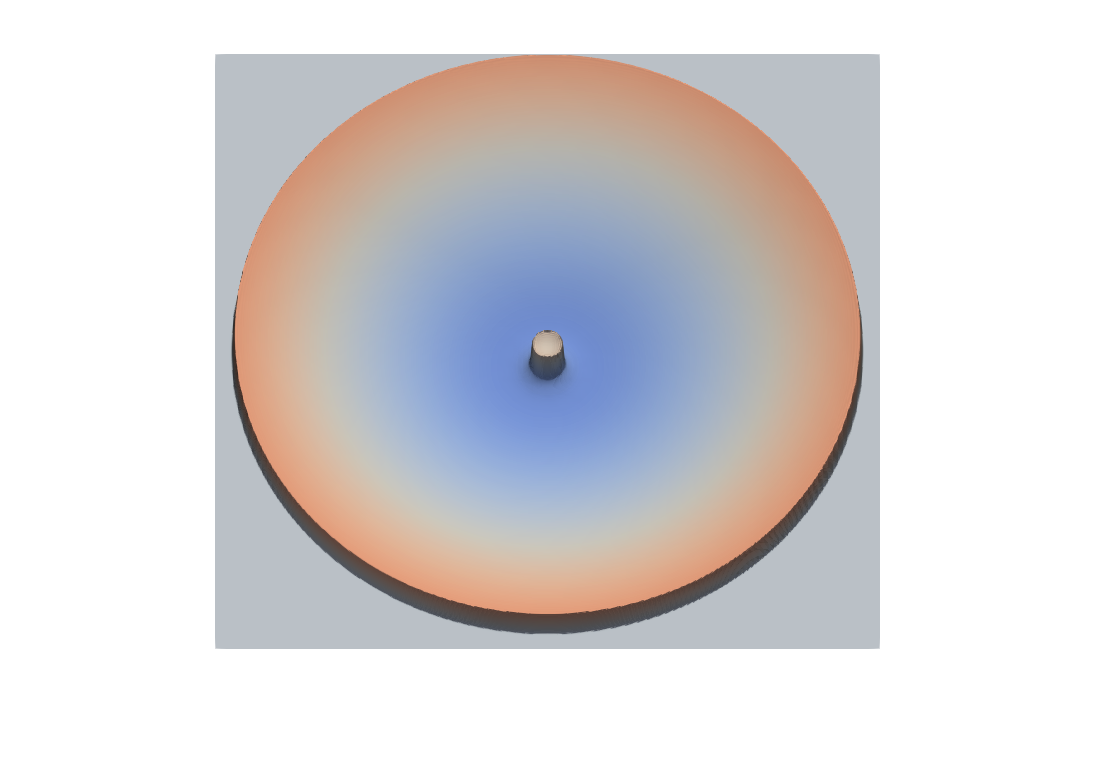}
\caption{Circular dam-break problem. Height obtained at $t=0.38$, $t=0.705$, $t=1.88$, $t=3.76$, $t=4.28$ and $t=T=4.7$ with the stabilized second-order scheme and a $800 \times 800$ mesh.
The color range corresponds to the $(0.1,2.5)$ interval for the first two plots, and to the $(0.1,1)$ interval for the last four ones.}
\label{fig:cdb1}
\end{figure}

\begin{figure}[htbp]
\includegraphics[width=0.75\textwidth]{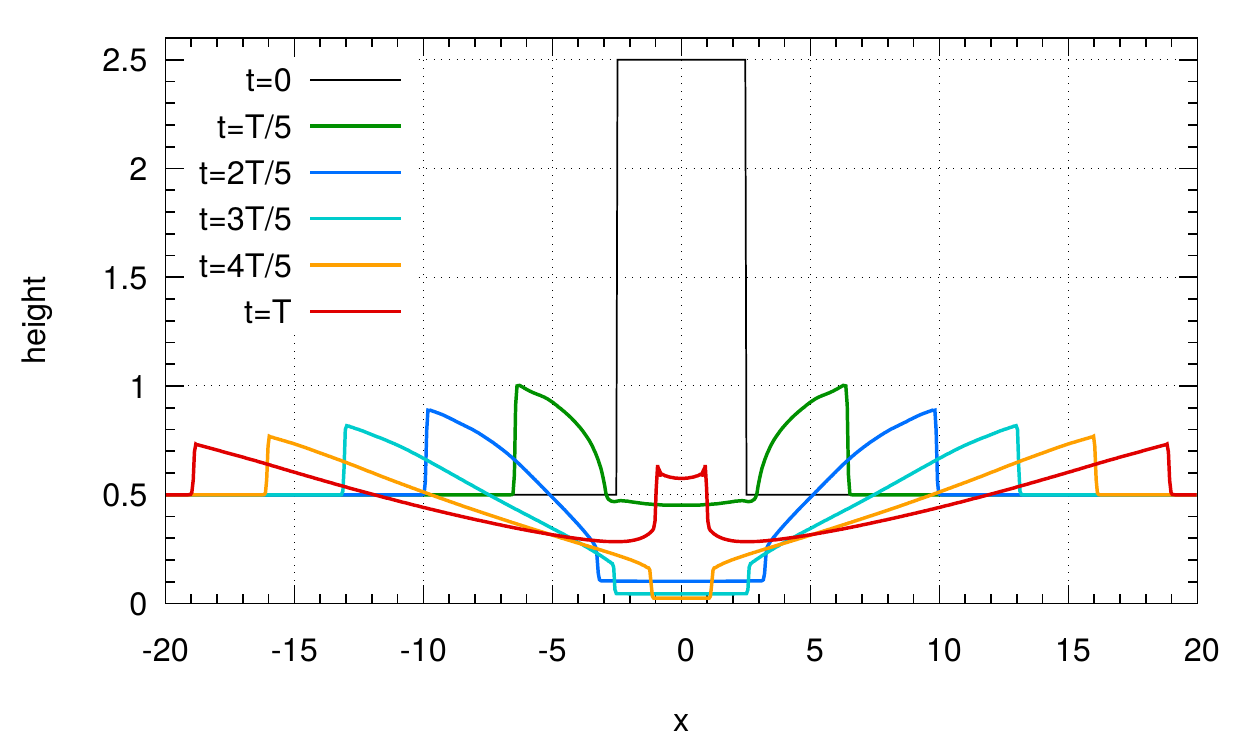}\\
\includegraphics[width=0.75\textwidth]{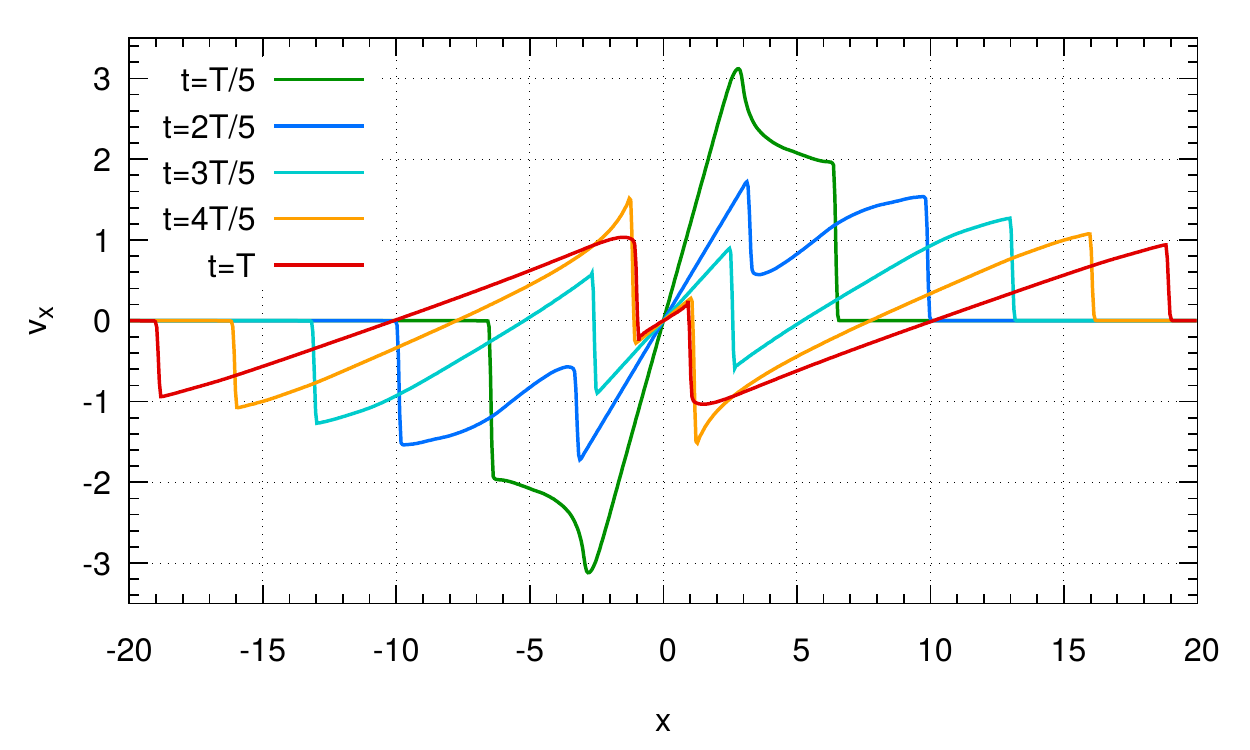}
\caption{Circular dam-break problem. Height and radial velocity obtained at different times along the line $x_2=0$ with the stabilized second-order scheme and a $800 \times 800$ mesh.}
\label{fig:cdb2}
\end{figure}

\begin{figure}[htbp]
\includegraphics[width=0.75\textwidth]{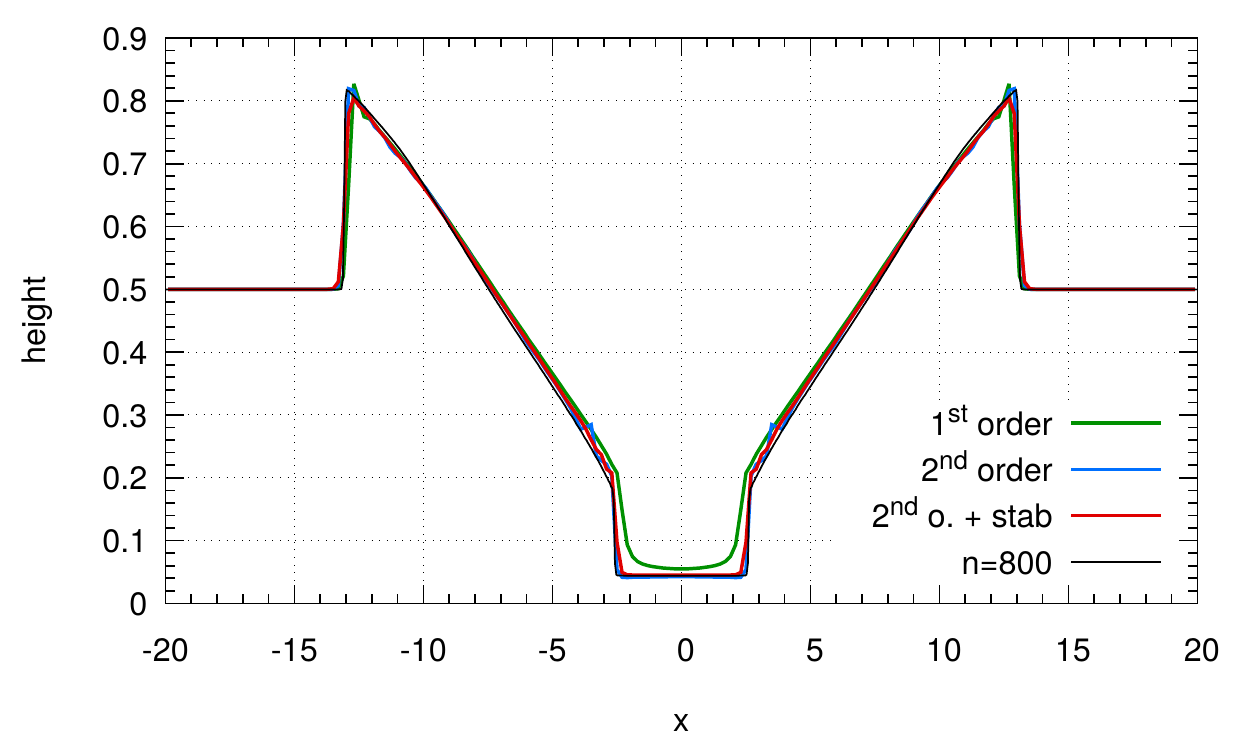}\\
\includegraphics[width=0.75\textwidth]{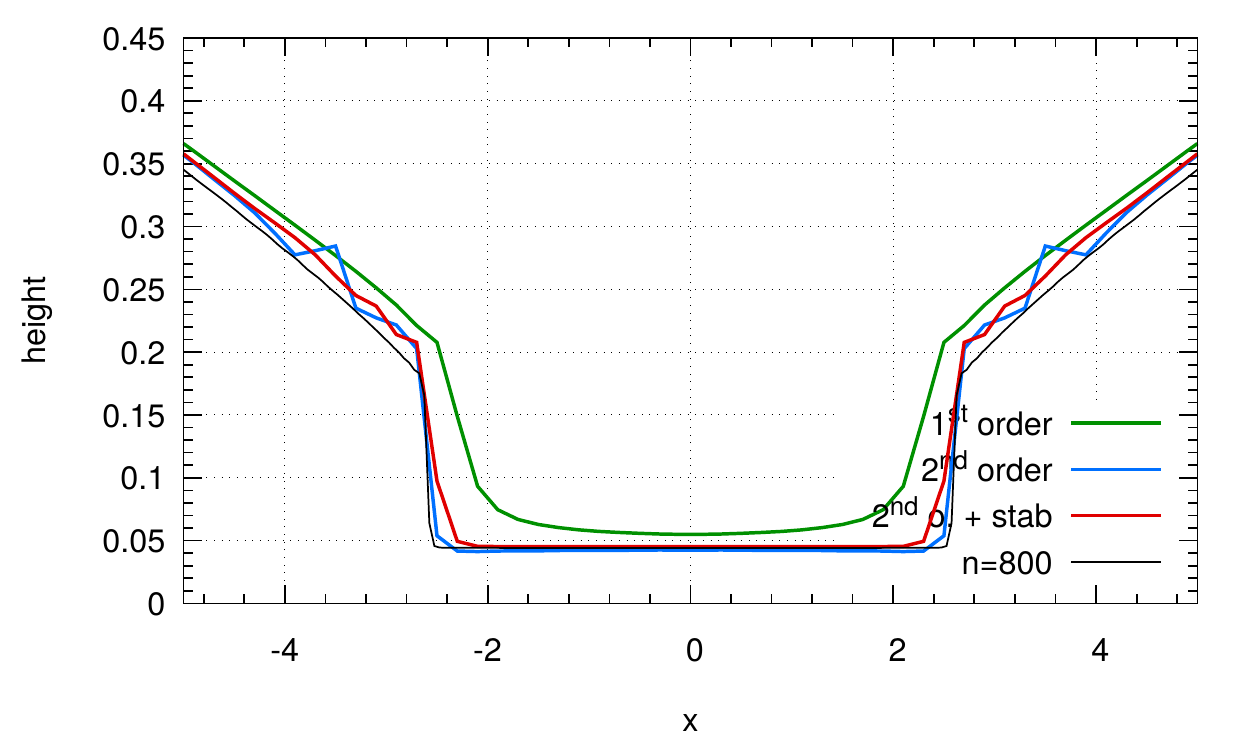}
\caption{Circular dam-break problem. Height obtained at $t=3T/5$ with the first-order scheme and the second-order scheme with and without stabilization, with a $200 \times 200$ mesh.}
\label{fig:cdb3}
\end{figure}
%
%
\subsection{A so-called partial dam-break problem} \label{subsec:part_dam_break}

We now turn to a test consisting in a partial dam-break problem with reflection phenomena, and with a non-flat bathymetry.
In this test, the computational domain is $\Omega=(0,200)\times(0,200) \setminus \Omega_w$ with $\Omega_w=(95,105)\times(0,95)\cup (95,105)\times(170,200)$.
The fluid is supposed to be initially at rest, the initial water height is $h=10$ for $x_1 \leq 100$ and $h=5-0.04\,(x_1-100)$ otherwise, and the bathymetry is $z=0$ if $x_1 \leq 100$ and $z=0.04\,(x_1-100)$ otherwise.
A zero normal velocity is prescribed at all the boundaries of the computational domain.
The computation is performed with a mesh obtained from a $1000 \times 1000$ regular grid by removing the cells included in $\Omega_w$.
The time step is $\delta t = \delta_\mesh /40$ (the maximal speed of sound and the maximal velocity are both close to $10$).
A stabilization with $\zeta=0.25$ (so two orders of magnitude lower than the artificial viscosity generated by the upwind scheme in high momentum zones) is added to damp oscillations appearing in the zones at rest, where no numerical diffusion is generated by our schemes.
Results obtained at $t=20$ with the first order in time and space and the present scheme are compared on Figure \ref{fig:pdb}.
One can observe that the second-order scheme is clearly less diffusive.
In addition, these results illustrate the capacity of the staggered scheme to deal with reflection conditions by simply imposing the normal velocity to the boundary at zero.

\begin{figure}[htbp]
\includegraphics[width=0.75\textwidth, clip=true, bb= 12.5cm  0.3cm  39cm  20.8cm]{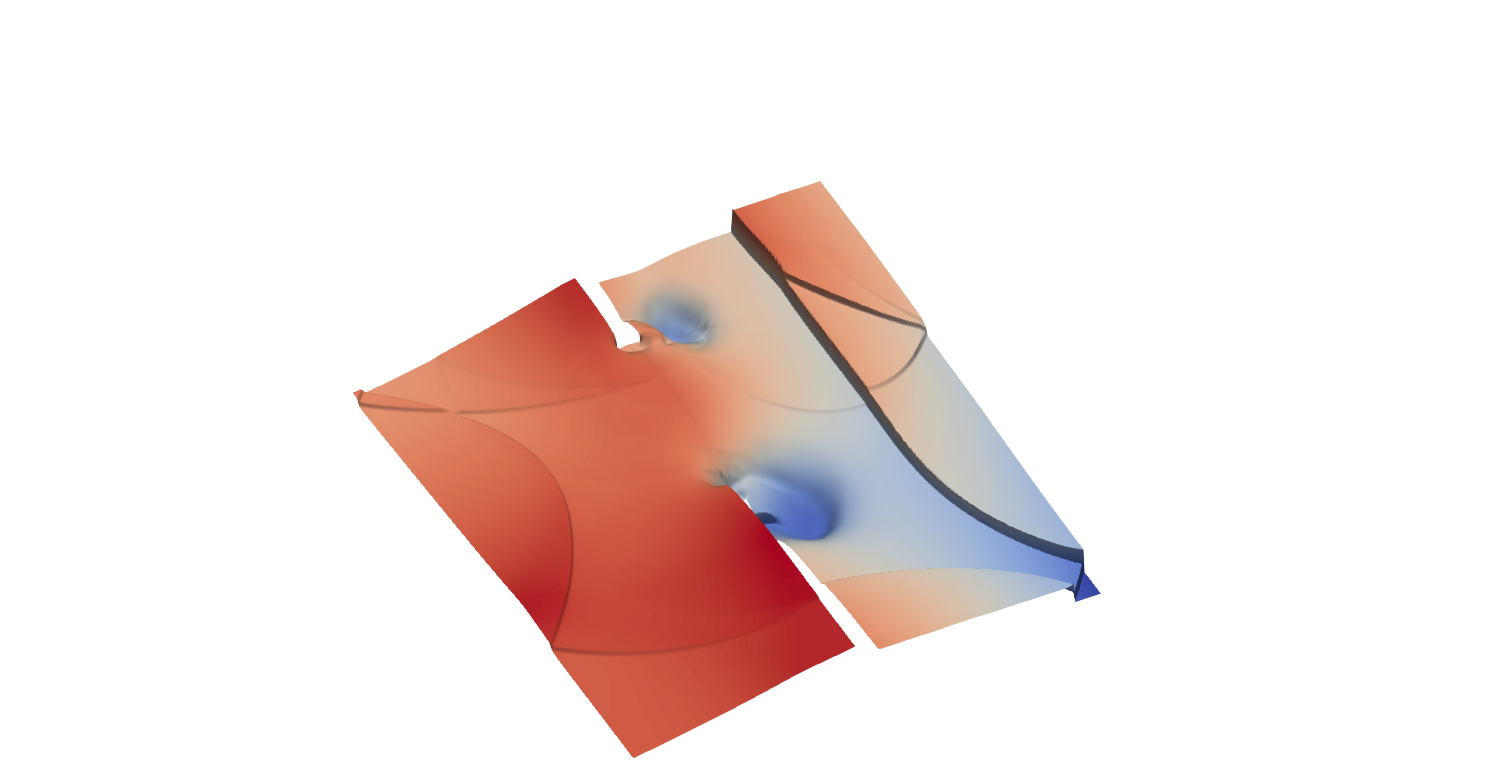} \\
\includegraphics[width=0.75\textwidth, clip=true, bb=  5.5cm  0.3cm  32cm  20.8cm]{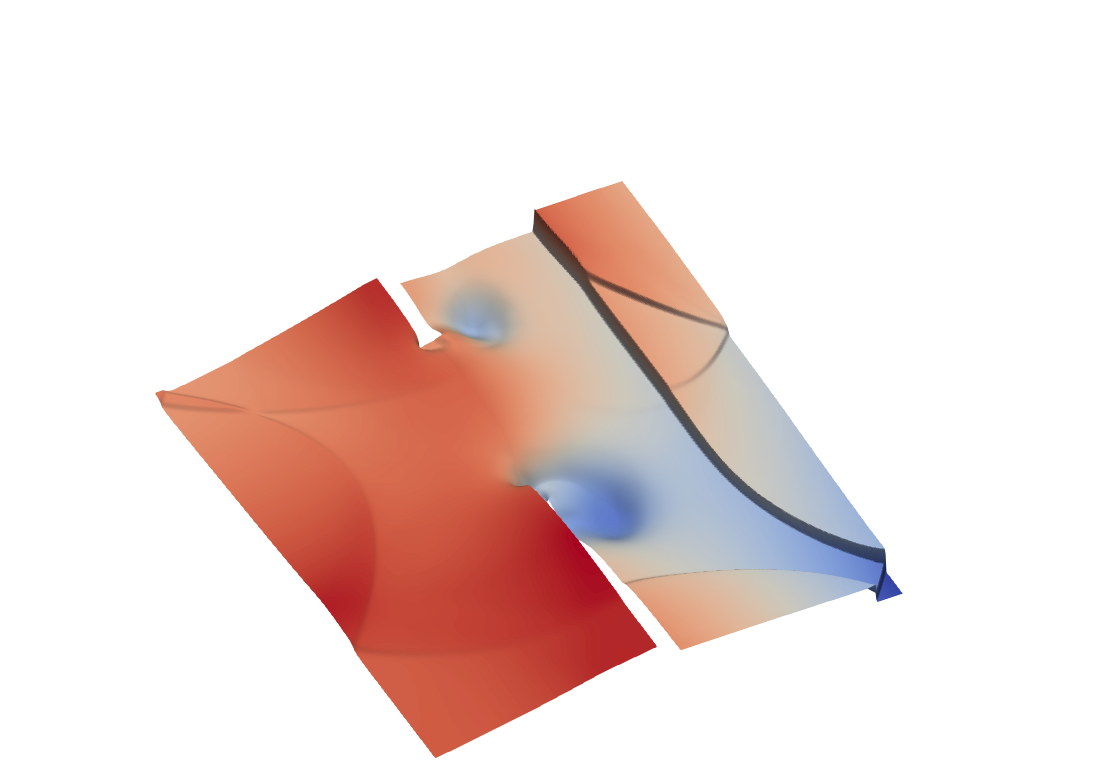}
\caption{Partial dam-break flow. Top: MUSCL scheme -- Bottom: upwind scheme.}
\label{fig:pdb}
\end{figure}

%
%
\subsection{Uniform circular motion in a paraboloid} \label{subsec:drop}

We address in this section a classical test which admits a closed-form solution and corresponds to the uniform rotation of a drop of liquid on a paraboloid-shaped support.
The solution is very regular (at a given time, the velocity field is constant and $h+z$ is affine outside the dry zones), and the essential interest of this test is to check whether the scheme is able to cope with dry zones, {\it i.e.}\ zones where the height is zero (in the continuous setting) or very close to zero, as we shall use numerically.
The computational domain is $\Omega=(0,L)\times(0,L)$ and the topography is given by
\[
z=- \frac{h_0}{a^2} \Bigl(a^2 - (x-\frac L 2)^2 - (y-\frac L 2)^2 \Bigr),
\]
with $h_0$ and $a$ parameters which are given below.
The height is:
\[
h = \max(0, \bar h) \mbox{ with } \bar h= \eta \frac{h_0}{a^2} \Bigl( 2\,(x - \frac L 2) \,\cos(\omega t) + 2 (y-\frac L 2) \,\sin(\omega t) - \eta \Bigr) - z,
\]
with $\eta$ a parameter and $\omega$ (the angular rotation velocity of the drop) given by
\[
\omega = \frac{(2 g h_0)^{1/2}} a.
\]
Finally, the velocity is
\[
\bfu = \eta\, \omega \begin{bmatrix} - \sin(\omega t)\\ \cos(\omega) t \end{bmatrix}.
\]
The computation is run up to $T= 6 \,\pi/\omega$, so the drop is supposed to perform 3 turns and to lie at the final time at its initial position.
The parameters are fixed here to $L=4$, $h_0=0.1$, $a=1$ and $\eta=0.5$.

For numerical tests, we bound $h$ from below by $10^{-8}$, {\it i.e.}\ we set $h=\max(10^{-8}, \bar h)$, in particular to avoid divisions by zero in the averaging steps of the Heun scheme (Equations \eqref{heun:last_mass} and \eqref{heun:last_mom}).
The computation are performed with a uniform $100 \times 100$ mesh, with $\delta t = \delta_\mesh/16$, without changing anything to the numerical fluxes to cope with dry zones.
This is clearly dangerous, since a non-upwind approximation of the water height at a face separating two cells with a large ratio of water height may lead to a huge outflow mass flux in view of the cell mass inventory (or, in other words, a very large CFL number).
This probably explains the rather small time step used here (the CFL number with respect to the celerity of the fastest waves is in the range of $1/8$); the first-order scheme, which uses upwind fluxes, works with time steps four times larger.
This problem would be probably cured by a more careful limitation of the mass fluxes outward an almost dry cell.

Results obtained with the first order, the segregated and the second order scheme at $t=6\,\pi /\omega$ are plotted on Figure \ref{fig:Ucm}.
All schemes give good results, which, for the first-order scheme, is probably due to the regularity of the solution.
For the momentum, one observes that the second-order scheme is less accurate than the other ones; this seems to be due to the time-stepping procedure, which perhaps generates some diffusion at the interface between dry and wet zones, especially in the last averaging step, since the segregated scheme is the most accurate one (and superimposed to the exact solution on Figure \ref{fig:Ucm}).

\begin{figure}[htbp]
\includegraphics[width=0.65\textwidth]{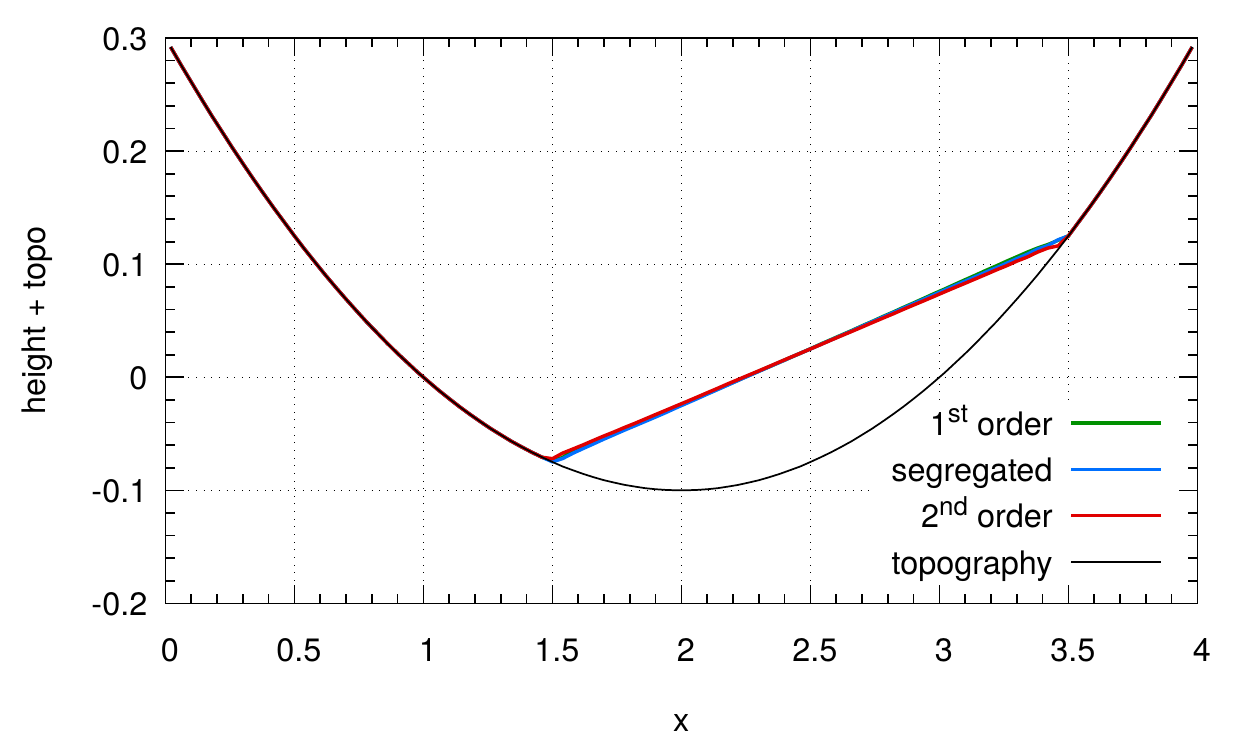}\\
\includegraphics[width=0.65\textwidth]{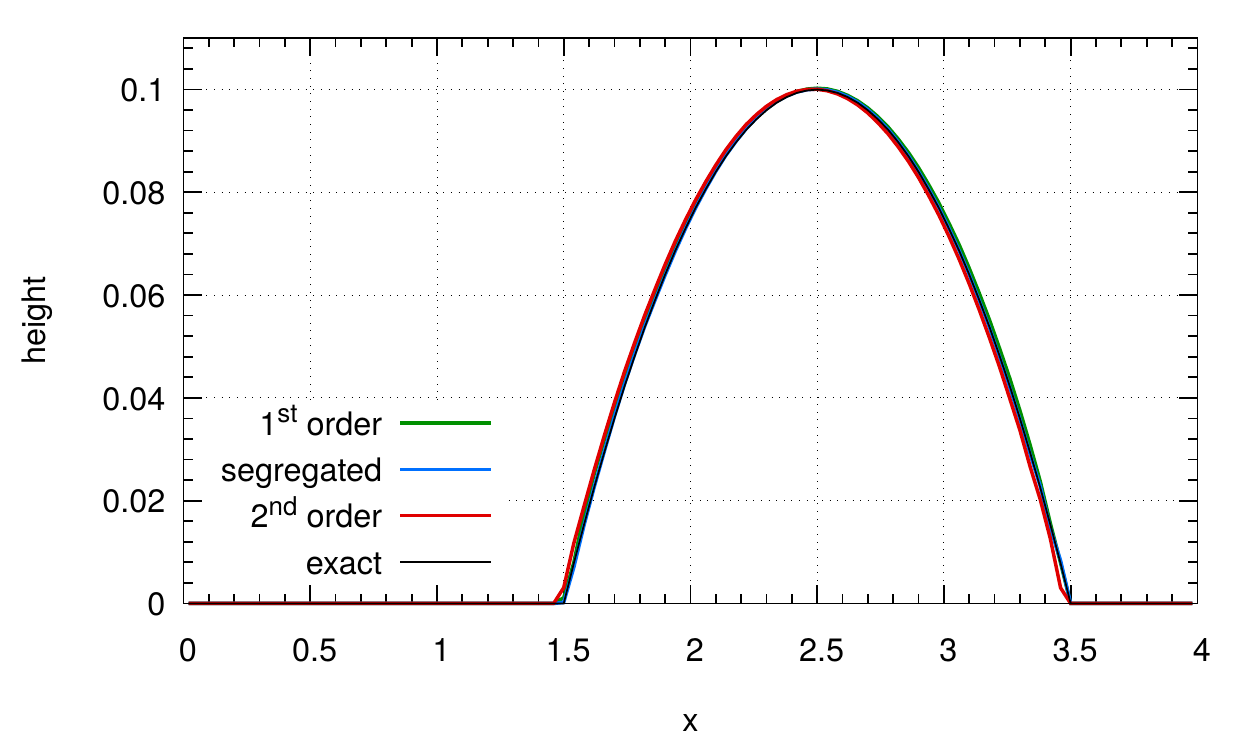}\\
\includegraphics[width=0.65\textwidth]{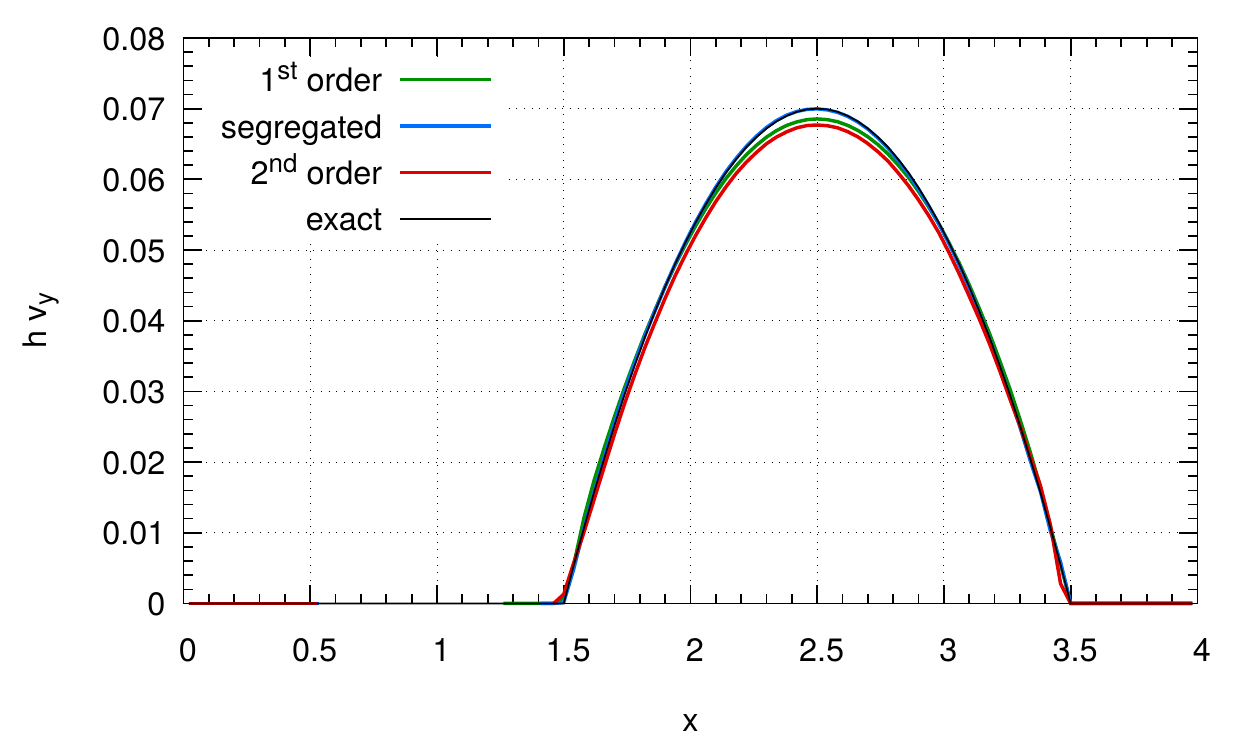}
\caption{Circular motion of a drop over a paraboloid-shaped topography. Sum of the height and the topography, height alone and second component of the momentum along the $y=L/2$ line at $t=6\,\pi$.}
\label{fig:Ucm}
\end{figure}

\bibliographystyle{abbrv}
\bibliography{sw}
\appendix
%
%
\section{Appendix}
This appendix gathers known results which are used in the stability or consistency proofs of the paper.

\smallskip
\textbf{Multiplying a finite volume convection operator by a function of the unknown} --
We begin with a property of the convection operator $\mathcal{C} : \rho \mapsto \partial_t(\rho) + \dive( \rho \bfu)$; at the continuous level, this property reads as follows (see \cite{her-18-con} for the detailed derivation).
Let $\psi$ be a regular function from $(0, +\infty)$ to $\xR$; then:
\begin{equation}
\psi'(\rho) \ \mathcal{C}(\rho)
 =\partial_t\bigl(\psi(\rho) \bigr) + \dive \bigl(\psi(\rho) \bfu \bigr)
+ \bigl(\rho \psi'(\rho) - \psi(\rho)\bigr)\ \dive \bfu.
\label{eq:renorm}
\end{equation}
This computation is of course completely formal and only valid for regular functions $\rho$ and $\bfu$.
The following lemma states a discrete analogue to \eqref{eq:renorm}.

\begin{lemma}\label{lem:A1}[On the discrete convection operator, \cite[Lemma A1]{her-13-exp}]
Let $P$ be a polygonal (resp. polyhedral) bounded set of $\xR^2$ (resp. $\xR^3$), and let $\edges(P)$ be the set of its edges (resp. faces).
Let $\psi$ be a twice continuously differentiable function defined over $(0,+\infty)$.
Let $\rho^\ast_P >0$, $\rho_P >0 $, $\delta t >0$; consider three families $(\rho^*_\eta)_{\eta \in \edges(P)} \subset \xR_+\setminus\{0\}, (V^*_\eta)_{\eta \in \edges(P)}\subset \xR$ and $(F^*_\eta)_{\eta \in \edges(P)} \subset \xR$ such that 
\[
\forall \eta \in \edges(P), \qquad F^*_\eta= \rho^*_\eta\ V^*_\eta.
\]
Let $R_{P,\delta t}$ be defined by:
\begin{multline*}
R_{P,\delta t}= \Bigl[\frac{|P|}{\delta t}\ (\rho_P - \rho_P^*) + \sum_{\eta \in \edges(P)} F^*_\eta \Bigr]\ \psi'(\rho_P)
\\
- \Big[\frac{|P|}{\delta t}\ [\psi(\rho_P) - \psi(\rho^*_P)] + \sum_{\eta \in \edges(P)} \psi(\rho^*_\eta) V^*_\eta
+ [\rho^*_P \psi'(\rho^*_P) - \psi(\rho^*_P)] \sum_{\eta \in \edges(P)} V^*_\eta \Big].
\end{multline*}
Then this quantity may be expressed as follows:
\[
R_{P,\delta t} = \frac 1 2 \frac{|P|}{\delta t}\, (\rho_P - \rho^*_P)^2\,\psi''(\overline \rho^{(1)}_P)
- \frac 1 2 \sum_{\eta \in \edges(P)} V^*_\eta\, (\rho^*_P - \rho^*_\eta)^2\, \psi''(\overline \rho^*_\eta)
+ \sum_{\eta \in \edges(P)} V^*_\eta \rho^*_\eta\, (\rho_P - \rho^*_P)\, \psi''(\overline \rho^{(2)}_P),
\]
where $\overline \rho^{(1)}_P,\ \overline \rho^{(2)}_P \in \llbracket \rho_P, \rho_P^*\rrbracket$ and $\forall \eta \in \edges(P)$, $\overline \rho^*_\eta \in \llbracket \rho^*_P, \rho^*_\eta \rrbracket$.
We recall that, for $a,\ b \in \xR$, we denote by $\llbracket a,b \rrbracket$ the interval $\llbracket a,b \rrbracket =\{\theta a + (1-\theta)b,\ \theta \in [0,1]\}$.
\end{lemma}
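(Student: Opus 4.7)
The plan is to prove this as a purely algebraic identity by splitting $R_{P,\delta t}$ into a ``time part'' and a ``flux part'' and applying the Taylor--Lagrange formula with exact remainder in each. Note that the identity is completely mesh-independent: it does not use that $\edges(P)$ comes from a mesh, only that the fluxes factor as $F^*_\eta = \rho^*_\eta V^*_\eta$.

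First I would write $R_{P,\delta t} = R^{\rm t}_P + R^{\rm f}_P$, where
\[
R^{\rm t}_P = \frac{|P|}{\delta t}\bigl[(\rho_P - \rho^*_P)\,\psi'(\rho_P) - (\psi(\rho_P) - \psi(\rho^*_P))\bigr],
\]
and $R^{\rm f}_P$ collects all terms involving $(F^*_\eta)$ and $(V^*_\eta)$. For $R^{\rm t}_P$, the Taylor--Lagrange formula applied at $\rho_P$ yields
\[
\psi(\rho^*_P) = \psi(\rho_P) + (\rho^*_P-\rho_P)\,\psi'(\rho_P) + \tfrac{1}{2}(\rho^*_P-\rho_P)^2\,\psi''(\overline\rho^{(1)}_P)
\]
for some $\overline\rho^{(1)}_P \in \llbracket \rho_P,\rho_P^* \rrbracket$, which gives immediately the first term of the claimed expression.

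For the flux part, I would substitute $F^*_\eta = \rho^*_\eta V^*_\eta$ and factor $V^*_\eta$ out to get
\[
R^{\rm f}_P = \sum_{\eta\in\edges(P)} V^*_\eta \Bigl[\rho^*_\eta\,\psi'(\rho_P) - \psi(\rho^*_\eta) - \rho^*_P\,\psi'(\rho^*_P) + \psi(\rho^*_P)\Bigr].
\]
Into the bracket I would plug in two Taylor expansions around $\rho^*_P$: a first-order expansion of $\psi'$ evaluated at $\rho_P$,
\[
\psi'(\rho_P) = \psi'(\rho^*_P) + (\rho_P-\rho^*_P)\,\psi''(\overline\rho^{(2)}_P),
\]
and a second-order expansion of $\psi$ evaluated at $\rho^*_\eta$,
\[
\psi(\rho^*_\eta) = \psi(\rho^*_P) + (\rho^*_\eta - \rho^*_P)\,\psi'(\rho^*_P) + \tfrac{1}{2}(\rho^*_\eta - \rho^*_P)^2\,\psi''(\overline\rho^*_\eta),
\]
with $\overline\rho^{(2)}_P \in \llbracket \rho_P,\rho_P^* \rrbracket$ and $\overline\rho^*_\eta \in \llbracket \rho^*_P,\rho^*_\eta \rrbracket$. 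A direct computation shows that all the zero-th and first-order contributions involving $\psi'(\rho^*_P)$ cancel, and the bracket reduces to
\[
\rho^*_\eta(\rho_P-\rho^*_P)\,\psi''(\overline\rho^{(2)}_P) - \tfrac{1}{2}(\rho^*_\eta-\rho^*_P)^2\,\psi''(\overline\rho^*_\eta),
\]
which, summed against $V^*_\eta$, produces exactly the last two terms of the stated formula (after noting $(\rho^*_\eta-\rho^*_P)^2 = (\rho^*_P-\rho^*_\eta)^2$).

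There is no conceptual obstacle here: the only mild point to check is that $\overline\rho^{(2)}_P$ does not depend on $\eta$, which holds because $\psi'(\rho_P)$ does not. The proof is thus entirely an algebraic rearrangement together with two applications of Taylor--Lagrange with integral/Lagrange remainders; one simply needs to keep track of which intermediate point lies in which interval, and collect terms.
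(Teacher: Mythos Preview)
Your proof is correct. The paper does not actually give a proof of this lemma in the appendix; it merely recalls the statement from \cite{her-13-exp}, so there is nothing to compare against. Your splitting into a time part and a flux part, followed by Taylor--Lagrange expansions of $\psi$ and $\psi'$ around $\rho^*_P$ (and of $\psi$ around $\rho_P$ for the time part), is exactly the natural route, and your observation that $\overline\rho^{(2)}_P$ is $\eta$-independent because it arises from expanding $\psi'(\rho_P)$ between $\rho_P$ and $\rho^*_P$ is the only point that needs care.
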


\bigskip
\textbf{Tools for the LW-consistency} -- 
We now turn to some results of \cite{gal-21-wea}; the first one generalises the Lax-Wendroff theorem to multidimensional problems with a general conservative operator applying to a set of unknowns belonging to a  quite general discretisation space; in particular, the discrete functions associated to the unknowns do not need to be piecewise constant or, as occurs with staggered discretisations, may be piecewise constant on different meshes.
The second result concerns the convergence of the space translates. 
Let us suppose that:
\begin{subequations} \label{pb-general}
\begin{align} &
\Omega \subset \xR^d, \; d= 1, 2,3, \; T \in (0,+\infty)
\\	&
p \in \xN^\ast, \;\beta \in C^1(\xR^p, \xR),\; \bff \in C^1(\xR^p, \xR^d),\; U \in \xL^\infty(\Omega\times(0,T), \xR^p), 
\end{align}
\end{subequations}
and consider the conservative convection operator defined (in the distributional sense) by: 
\begin{align} \label{def:op-conv} \nonumber
\mathcal C(U) : &\quad \Omega\times(0,T) 	\to \xR, 
\\ &
\quad (\bfx,t) \mapsto \partial_t(\beta(U))(\bfx,t) + \dive(\bff(U))(\bfx,t).
\end{align}

The following theorem is a straigthforward consequence of \cite[Theorem 2.1]{gal-21-wea}. 
\begin{theorem}[LW-consistency for a multi-dimensional conservative convection operator] \label{theo:lw}
Under the assumptions \eqref{pb-general}, let $(U\m)_\mnn \subset \xL^\infty(\Omega\times(0,T), \xR^p)$ be a sequence of functions such that:
\begin{align} \label{lemgen:linfbound} 	&
\exists \ C^u \in \xR_+^\ast \ :\ \Vert U\m \Vert_\infty \le C^u \ \forall \mnn,
\\ \label{lemgen:l1conv} &
\exists \ \bar U \in \xL^\infty(\Omega\times(0,T), \xR^p) : \Vert U\m - \bar U \Vert_{\xL^1(\Omega\times(0,T), \xR^p)} \to 0 \mbox{ as } m \to +\infty.
\end{align}
Let $(\mathcal P_m)_\mnn$ be a sequence of polygonal or polyhedral conforming meshes of $\ \Omega$ such that 
\[
\delta(\mathcal P_m) = \max_{P \in \mathcal P_m} \mathrm{diam} (P) \to 0 \mbox{ as } m \to +\infty.
\] 
Let $\edgespart\m$ denote the set of edges (or faces) of the mesh, and for a given polygon (or polyhedron) $P \in \mathcal P\m$, let $\edgespart\m(P)$ be the set of faces (or edges) of $P$.
For $\mnn$, let $t_0\m = 0 < t_1\m < \ldots < t\m_{N\m} = T$ be a uniform discretisation of $(0,T)$ with $\delta t\m = t_{k+1}\m - t_k\m \to 0 $ as $m\to + \infty$, and consider the discrete convection operator 
\begin{align*} 
\mathcal C\m(U\m) : & \quad \Omega\times(0,T) 	\to \xR,
\\ & \quad
(\bfx,t) \mapsto (\eth_t \beta\m)_P^n +\frac 1 {|P|} \sum_{\zeta \in \edgespart\m(P)} | \zeta | (\bfF\m)_\zeta^n \cdot \bfn_{P,\zeta} \quad \mbox{for } \bfx \in P \mbox{ and } t \in (t_n,t_{n+1})
\end{align*}
with
\[
(\eth_t \beta\m)_P^n = \frac 1 {\delta t} ((\beta\m)_P^{n+1} - (\beta\m)_P^n).
\]
We suppose that the families $\{(\beta\m)_P^n, P \in \mathcal P\m, \ n \in \llbracket 0,N\m \rrbracket \}$ of real numbers and $\{(\bfF\m)_{\zeta}^n, \zeta \in \edgespart\m, \ n \in \llbracket 0,N\m -1\rrbracket \}$ of real vectors are such that 
\begin{align} \label{hyp:condi}	&
\sum_{P \in \mathcal P\m}  \int_P \bigl|(\beta\m)_P^0 - \beta(U_0(\bfx)) \bigr| \dx \to 0 \mbox{ as } m \to + \infty, \mbox{ with } U_0 \in \xL^\infty(\Omega,\xR^p),
\\[1ex]	\label{hyp:t} &
\sum_{n=0}^{N\m-1} \sum_{P \in \mathcal P\m} \int_{t_n}^{t_{n+1}} \int_P \bigl|(\beta\m)_P^n - \beta(U\m(\bfx,t)) \bigr| \dx \dt \to 0 \mbox{ as } m \to + \infty,
\\[1ex] \label{hyp:x} &
\sum_{n=0}^{N\m-1} \sum_{P \in \mathcal P\m} \frac{\mathrm{diam} (P)}{|P|} \int_{t_n}^{t_{n+1}} \int_P
\sum_{\zeta \in \edgespart(P)} |\zeta|\ \bigl|\bigl((\bfF\m)_{\zeta}^n -\bff(U^m(\bfx,t) \bigr)\cdot \bfn_{P,\zeta} \bigr| \dx \dt \to 0 \mbox{ as } m \to + \infty. 
\end{align}
Let $\varphi \in C_c^\infty(\Omega\times[0,t))$, then
\begin{multline} \label{lw}
\int_0^T \int_\Omega \mathcal C\m(U\m)(\bfx,t) \varphi (\bfx,t) \dx \dt \to -\int_\Omega \beta(U_0(\bfx)) \varphi (\bfx,0) \dx
\\
- \int_0^T \int_\Omega \big(\beta(\bar U)(\bfx,t)\partial_t \varphi (\bfx,t) + \ \bff(\bar U)(\bfx,t) \cdot \gradi \varphi (\bfx,t) \big) \dx \dt \mbox{ as } m \to + \infty.
\end{multline}
\end{theorem}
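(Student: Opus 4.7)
The strategy is the classical Lax--Wendroff one, adapted to the situation where the discrete data $\beta^{(m)}_P$ and $\bfF^{(m)}_\zeta$ are only \emph{related} to $\beta(U^{(m)})$ and $\bff(U^{(m)})$ through the consistency assumptions \eqref{hyp:t} and \eqref{hyp:x}. I would test the discrete operator against the smooth test function $\varphi$, move the discrete differences onto $\varphi$ by discrete Abel summation in time and a face-by-face reordering in space, and then exchange the discrete data for continuous ones using \eqref{hyp:condi}--\eqref{hyp:x}. The $\xL^1$ convergence of $U^{(m)}$ to $\bar U$, together with the uniform $\xL^\infty$ bound \eqref{lemgen:linfbound} and the continuity of $\beta,\bff$, then identifies the limit. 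Concretely, I let $\varphi_P^n$ denote the mean value of $\varphi$ on $P\times(t_n,t_{n+1})$, with $\varphi_P^{N^{(m)}}=0$ (consistent with $\mathrm{supp}\,\varphi \subset \Omega\times[0,T)$). Replacing $\varphi$ by $\varphi_P^n$ in $\int_0^T\int_\Omega \mathcal C^{(m)}(U^{(m)})\,\varphi\,\dx\,\dt$ introduces a remainder which vanishes thanks to the smoothness of $\varphi$ and the boundedness of the discrete data inherited from \eqref{hyp:t}, \eqref{hyp:x} and \eqref{lemgen:linfbound}.

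For the time-derivative part, discrete Abel summation together with $\varphi_P^{N^{(m)}}=0$ gives
\[
\sum_{n=0}^{N^{(m)}-1}\sum_{P\in \mathcal P^{(m)}} |P|\,(\beta_P^{n+1}-\beta_P^n)\,\varphi_P^n = -\sum_{P} |P|\,\beta_P^{0}\,\varphi_P^{0} - \sum_{n,P} |P|\,\beta_P^{n+1}\,(\varphi_P^{n+1}-\varphi_P^n).
\]
The first term tends to $-\int_\Omega \beta(U_0)\,\varphi(\bfx,0)\,\dx$ by \eqref{hyp:condi} and the smoothness of $\varphi$. In the second, \eqref{hyp:t} lets me replace $\beta_P^{n+1}$ by $\beta(U^{(m)}(\bfx,t))$ up to vanishing error; using $|\varphi_P^{n+1}-\varphi_P^n|\le \|\partial_t\varphi\|_\infty\,\delta t^{(m)}$, the continuity of $\beta$, the bound \eqref{lemgen:linfbound} and the $\xL^1$ convergence $U^{(m)}\to \bar U$ (so that $\beta(U^{(m)}) \to \beta(\bar U)$ in $\xL^1$ by dominated convergence), this second term tends to $-\int_0^T\int_\Omega \beta(\bar U)\,\partial_t \varphi\,\dx\,\dt$.

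For the flux part, I would reorder by faces, using conservativity $\bfn_{P,\zeta}=-\bfn_{P',\zeta}$ across an internal face $\zeta=P|P'$ and the fact that $\varphi$ vanishes near $\partial \Omega$ to discard boundary contributions:
\[
\sum_{n,P,\zeta\in\edgespart^{(m)}(P)} \delta t^{(m)}\,|\zeta|\,(\bfF^{(m)})_\zeta^n\cdot\bfn_{P,\zeta}\,\varphi_P^n = \sum_{n}\sum_{\zeta=P|P'}\delta t^{(m)}\,|\zeta|\,(\bfF^{(m)})_\zeta^n\cdot\bfn_{P,\zeta}\,(\varphi_P^n-\varphi_{P'}^n).
\]
Since $|\varphi_P^n-\varphi_{P'}^n|\le C(\diam(P)+\diam(P'))\,\|\gradi\varphi\|_\infty$, the weight $|\zeta|\,\diam(P)/|P|$ appearing in \eqref{hyp:x} is exactly what is needed to exchange $(\bfF^{(m)})_\zeta^n$ for a local average of $\bff(U^{(m)})$ up to vanishing error. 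Once this replacement is done, the remaining sum is, again by $\varphi$-smoothness and dominated convergence, a discrete version of $-\int_0^T\int_\Omega \bff(U^{(m)})\cdot\gradi\varphi\,\dx\,\dt$, which converges to $-\int_0^T\int_\Omega \bff(\bar U)\cdot\gradi\varphi\,\dx\,\dt$.

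\textbf{Main obstacle.} The delicate step is the flux exchange: the numerical flux $(\bfF^{(m)})_\zeta^n$ is merely a reconstruction, not a pointwise value of $\bff$, and assumption \eqref{hyp:x} is tailored precisely so that its weight $|\zeta|\,\diam(P)/|P|$ matches the Lipschitz quotient $|\varphi_P^n-\varphi_{P'}^n|/\diam(P)$ of the test function across a face. Everything else---the initial-value term, the time-derivative telescoping, and the final passage to the limit---reduces to routine Abel summation plus dominated convergence; the tight matching of powers of the mesh size in \eqref{hyp:x} is what makes the proof go through without any further regularity assumption on the sequence $U^{(m)}$.
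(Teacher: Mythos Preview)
Your proposal is correct and follows essentially the same route as the paper. The paper does not give a self-contained proof of this theorem: it simply states that the result is a straightforward consequence of \cite[Theorem~2.1]{gal-21-wea}, and that the proof decomposes into the two building blocks Lemma~\ref{lem:time-cons} (consistency of the discrete time derivative) and Lemma~\ref{lem:space-cons} (consistency of the discrete divergence). Your Abel-summation argument for the time part and your face-reordering/conservativity argument for the flux part are precisely the content of these two lemmas, and your identification of the weight $|\zeta|\,\diam(P)/|P|$ in \eqref{hyp:x} as matching the Lipschitz quotient of $\varphi$ across a face is exactly the mechanism that makes Lemma~\ref{lem:space-cons} work. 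One minor remark: since $\mathcal C^{(m)}(U^{(m)})$ is piecewise constant on each $P\times(t_n,t_{n+1})$, replacing $\varphi$ by its mean $\varphi_P^n$ in the integral is \emph{exact}, so the ``remainder'' you mention there is in fact zero.
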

The proof of this result relies on the next two lemmas, which are straigthforward consequences of \cite[Lemma 2.7 and Lemma 2.8]{gal-21-wea}.
\begin{lemma}[LW-consistency, time derivative] \label{lem:time-cons}
Under the assumptions and notations of Theorem \ref{theo:lw},
\begin{multline*}
\sum_{n=0}^{N\m-1} \sum_{P \in \mathcal P\m} \int_{t_n}^{t_{n+1}} \int_P (\eth_t \beta\m)_P^n\ \varphi (\bfx,t) \dx \dt
\\
\to 
- \int_\Omega \beta\bigl(U_0(\bfx)\bigr)\, \varphi (\bfx,0) \dx
- \int_0^T \int_\Omega \beta(\bar U)(\bfx,t)\ \partial_t \varphi (\bfx,t) \dx \dt \quad \mbox{ as } m \to + \infty.
\end{multline*}
\end{lemma}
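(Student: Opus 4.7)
The plan is to apply a discrete integration by parts (Abel summation) in time, then pass to the limit in each of the resulting two terms using \eqref{hyp:condi} and \eqref{hyp:t} together with the smoothness of $\varphi$. Since $(\eth_t \beta\m)_P^n$ is constant on each cylinder $P \times (t_n, t_{n+1})$, introducing the mean value
\[
\varphi_P^n = \frac{1}{|P|\,\delta t\m} \int_{t_n}^{t_{n+1}}\int_P \varphi(\bfx, t) \dx \dt,
\]
the sum of interest rewrites as
\[
T\m := \sum_{n=0}^{N\m-1} \sum_{P \in \mathcal P\m} |P|\, \bigl((\beta\m)_P^{n+1} - (\beta\m)_P^n\bigr)\, \varphi_P^n.
\]
Abel's transform, combined with the compact support of $\varphi$ in $\Omega \times [0,T)$ (which ensures that $\varphi_P^{N\m-1} = 0$ for $m$ large enough), yields
\[
T\m = - \sum_{P \in \mathcal P\m} |P|\, (\beta\m)_P^0 \, \varphi_P^0 \ - \sum_{n=1}^{N\m-1} \sum_{P \in \mathcal P\m} |P|\, (\beta\m)_P^n\, (\varphi_P^n - \varphi_P^{n-1}).
\]

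For the first (initial-data) term, the smoothness of $\varphi$ together with $\delta(\mathcal P\m),\delta t\m \to 0$ gives $\varphi_P^0 \to \varphi(\bfx, 0)$ uniformly on $\Omega$; combined with \eqref{hyp:condi} and the boundedness of $\varphi$, this produces the limit $\int_\Omega \beta(U_0(\bfx))\,\varphi(\bfx, 0) \dx$. For the second term, I introduce the piecewise constant functions $\tilde\beta\m$ equal to $(\beta\m)_P^n$ on $P \times [t_n, t_{n+1})$ and $\psi\m$ equal to $(\varphi_P^n - \varphi_P^{n-1})/\delta t\m$ on $P \times [t_n, t_{n+1})$ for $n \ge 1$ (and zero on $[0, t_1)$), so that this term reads $-\int_0^T \int_\Omega \tilde\beta\m\, \psi\m \dx \dt$. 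The assumption \eqref{hyp:t}, together with the $\xL^1$ convergence $U\m \to \bar U$, the uniform $\xL^\infty$ bound \eqref{lemgen:linfbound} and the $C^1$ regularity of $\beta$ (hence its Lipschitz property on bounded sets), yields $\tilde\beta\m \to \beta(\bar U)$ in $\xL^1(\Omega \times (0, T))$; and the smoothness of $\varphi$ gives the uniform convergence $\psi\m \to \partial_t \varphi$ on $\Omega \times [0, T)$. Since $\tilde\beta\m$ is uniformly bounded, this second term converges to $-\int_0^T \int_\Omega \beta(\bar U)\,\partial_t \varphi \dx \dt$, which concludes the proof.

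The main technical point is the uniform convergence $\psi\m \to \partial_t \varphi$, which needs to be established without any CFL-type hypothesis relating $\delta(\mathcal P\m)$ and $\delta t\m$. A naive joint Taylor expansion of $\varphi$ around a point $(\bfx_P, t_n)$ yields a remainder of order $\delta(\mathcal P\m)^2/\delta t\m$, which does not vanish in general. The right route is to first average $\varphi$ in space, producing a $C^\infty$ function $\bar\varphi_P(t) = \frac{1}{|P|}\int_P \varphi(\bfx, t)\dx$ of $t$ alone; then $\varphi_P^n - \varphi_P^{n-1}$ is the difference of the time-averages of $\bar\varphi_P$ over two consecutive intervals, and a Taylor expansion in $t$ alone gives $\psi\m = \bar\varphi_P'(t_n) + O(\delta t\m) = \partial_t \varphi(\bfx_P, t_n) + O(\delta(\mathcal P\m) + \delta t\m)$, from which the desired uniform convergence follows since $\partial_t \varphi$ is itself continuous with compact support.
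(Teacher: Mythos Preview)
Your proof is correct and complete. The paper itself does not prove this lemma: it merely states that it is a ``straightforward consequence'' of \cite[Lemma~2.7]{gal-21-wea}, so there is no in-paper argument to compare against. Your Abel-summation approach, followed by passing to the limit via the $\xL^1\times\xL^\infty$ pairing, is exactly the standard route for such results and matches what one expects the cited reference to contain.

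One small remark: your claim that $\tilde\beta\m$ is uniformly bounded in $\xL^\infty$ is not justified by the abstract hypotheses of Theorem~\ref{theo:lw}, which impose no pointwise bound on the discrete values $(\beta\m)_P^n$ (only the $\xL^1$-type controls \eqref{hyp:condi} and \eqref{hyp:t}). Fortunately your argument does not actually need this: since $\tilde\beta\m \to \beta(\bar U)$ in $\xL^1$ (by \eqref{hyp:t}, \eqref{lemgen:linfbound}, \eqref{lemgen:l1conv} and the local Lipschitz property of $\beta$) and $\psi\m \to \partial_t\varphi$ uniformly with $\psi\m$ uniformly bounded, the decomposition
\[
\int \tilde\beta\m\,\psi\m - \int \beta(\bar U)\,\partial_t\varphi
= \int (\tilde\beta\m-\beta(\bar U))\,\psi\m + \int \beta(\bar U)\,(\psi\m-\partial_t\varphi)
\]
already yields the convergence. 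Your observation that one must average in space first (to avoid the spurious $\delta(\mathcal P\m)^2/\delta t\m$ remainder and thus dispense with any CFL hypothesis) is the key technical point, and you handle it correctly.
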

\begin{lemma}[LW-consistency, space derivative] \label{lem:space-cons}
Under the assumptions and notations of Theorem \ref{theo:lw},
\begin{multline*}
\sum_{n=0}^{N\m-1} \sum_{P \in \mathcal P\m} \frac 1 {|P|} \int_{t_n}^{t_{n+1}} \int_P
\bigl(\sum_{\zeta \in \edgespart(P)} |\zeta| (\bfF\m)_\zeta^n \cdot \bfn_{P,\zeta}\bigr)\ \varphi (\bfx,t) \dx \dt
\\
\to 
- \int_0^T \! \!\int_\Omega \bff(\bar U)(\bfx,t) \cdot \gradi \varphi (\bfx,t) \dx \dt \quad \mbox{ as } m \to + \infty.
\end{multline*}
\end{lemma}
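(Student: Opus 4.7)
The plan is to follow the classical Lax--Wendroff strategy: perform a discrete integration by parts to transfer the divergence form onto the test function $\varphi$, then identify the limit via a strong $L^1$-passage to the limit on the numerical flux, using assumption \eqref{hyp:x} together with \eqref{lemgen:linfbound} and \eqref{lemgen:l1conv}.

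First, I would replace $\varphi$ on each cylinder $P\times(t_n,t_{n+1})$ by its mean value $\bar\varphi_P^n$. Adding and subtracting $\bff(U\m)$ in the face sum, the resulting error is bounded by the $L^1$-quantity appearing in \eqref{hyp:x} multiplied by $\Vert\varphi-\bar\varphi_P^n\Vert_\infty\le C_\varphi(\delta(\mathcal P\m)+\delta t\m)$, plus an analogous term involving the piecewise constant field $\bff(U\m)$ whose contribution vanishes thanks to the smoothness of $\varphi$ and the $L^\infty$-bound \eqref{lemgen:linfbound}; both terms tend to zero as $m\to+\infty$.

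Next, I would reorder the sum $\sum_{P}\sum_{\zeta\in\edgespart(P)}$ as a sum over faces: on an interior face $\zeta=P|Q$ the contributions of $P$ and $Q$ combine, using $\bfn_{P,\zeta}=-\bfn_{Q,\zeta}$, into $|\zeta|\,\bfF_\zeta^n\cdot\bfn_{P,\zeta}\,(\bar\varphi_P^n-\bar\varphi_Q^n)$; boundary faces produce no contribution once $\delta(\mathcal P\m)$ is smaller than the distance from $\mathrm{supp}(\varphi)$ to $\partial\Omega$. The smoothness of $\varphi$ then provides a first-order Taylor expansion $\bar\varphi_P^n-\bar\varphi_Q^n=-\gradi\varphi(\bfx_\zeta,t_n)\cdot(\bfx_Q-\bfx_P)+O((\delta(\mathcal P\m)+\delta t\m)^2)$, and the leading term may be regrouped, cell by cell, into an integral of the form $-\int_0^T\!\int_\Omega\widetilde{\bfF}\m\cdot\gradi\varphi\,\dx\,\dt$, where $\widetilde{\bfF}\m$ is a piecewise-constant reconstruction built from the discrete numerical fluxes.

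Finally, \eqref{hyp:x} lets me replace $\widetilde{\bfF}\m$ by $\bff(U\m)$ up to a vanishing $L^1$-error, and \eqref{lemgen:linfbound}--\eqref{lemgen:l1conv} combined with the continuity of $\bff$ yield, by dominated convergence, $\bff(U\m)\to\bff(\bar U)$ strongly in $L^1(\Omega\times(0,T))^d$; passing to the limit against the smooth vector field $\gradi\varphi$ then concludes. The main obstacle is the geometric identification step just described: in a general polygonal or polyhedral setting, showing that the ``discrete face-gradient'' $|\zeta|\,\bfn_{P,\zeta}(\bar\varphi_P^n-\bar\varphi_Q^n)$, summed over the faces of a cell, reproduces $\int_P\gradi\varphi$ with a summable remainder of order $(\delta(\mathcal P\m)+\delta t\m)^2$ requires careful book-keeping of the cell geometry, and is precisely where one leans on the detailed framework of \cite{gal-21-wea}.
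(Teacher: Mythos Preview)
The paper does not give its own proof of this lemma; it simply records it as a direct consequence of \cite[Lemma~2.8]{gal-21-wea}. Your outline is broadly correct and follows the classical Lax--Wendroff route, but the step you yourself flag as ``the main obstacle'' --- the geometric regrouping of $|\zeta|\,\bfn_{P,\zeta}(\bar\varphi_P^n-\bar\varphi_Q^n)$ into a cell integral of $\gradi\varphi$ via a Taylor expansion --- is in fact unnecessary, and the argument in \cite{gal-21-wea} bypasses it entirely.

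The cleaner route is as follows. Instead of subtracting the \emph{cell} mean $\bar\varphi_P^n$, subtract the \emph{face} average $\varphi_\zeta(t)=\dfrac{1}{|\zeta|}\int_\zeta\varphi(\cdot,t)\,ds$. By conservativity of the numerical flux and the compact support of $\varphi$, the quantity $\sum_P\sum_{\zeta\in\edgespart(P)}|\zeta|\,\bfF_\zeta^n\cdot\bfn_{P,\zeta}\,\varphi_\zeta(t)$ vanishes identically, so one may write
\[
S\m=\sum_n\sum_P\frac{1}{|P|}\int_{t_n}^{t_{n+1}}\!\!\int_P\sum_{\zeta}|\zeta|\,\bfF_\zeta^n\cdot\bfn_{P,\zeta}\,\bigl(\varphi(\bfx,t)-\varphi_\zeta(t)\bigr)\dx\dt.
\]
Now split $\bfF_\zeta^n=\bigl(\bfF_\zeta^n-\bff(U\m(\bfx,t))\bigr)+\bff(U\m(\bfx,t))$. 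The first piece is controlled by \eqref{hyp:x} exactly as you say, since $|\varphi(\bfx,t)-\varphi_\zeta(t)|\le C_\varphi\,\mathrm{diam}(P)$. For the second piece, fix $(\bfx,t)$ and set $\bfv=\bff(U\m(\bfx,t))$; using $\sum_\zeta|\zeta|\,\bfn_{P,\zeta}=0$ and the definition of $\varphi_\zeta(t)$,
\[
\sum_\zeta|\zeta|\,\bfv\cdot\bfn_{P,\zeta}\bigl(\varphi(\bfx,t)-\varphi_\zeta(t)\bigr)
=-\int_{\partial P}\bfv\cdot\bfn\,\varphi(\cdot,t)\,ds
=-\int_P\bfv\cdot\gradi\varphi(\cdot,t)\,d\bfy,
\]
the last equality being the \emph{exact} divergence theorem on $P$ for the constant vector $\bfv$. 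Averaging over $\bfx\in P$ and replacing the cell-mean of $\gradi\varphi$ by $\gradi\varphi(\bfx,t)$ (an $O(\mathrm{diam}(P))$ error, harmless by \eqref{lemgen:linfbound}) yields $-\int_0^T\!\int_\Omega\bff(U\m)\cdot\gradi\varphi$, and the conclusion follows from $\bff(U\m)\to\bff(\bar U)$ in $L^1$.

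In short: your strategy works, but the Taylor-expansion-and-regrouping step is a detour; choosing face averages of $\varphi$ turns that step into an exact application of the divergence theorem, valid on arbitrary polytopes with no geometric book-keeping.
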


\begin{remark}[Disregarding the boundary cells]
From the proof of Theorem \ref{theo:lw}, it is clear that boundary cells may be disregarded in the sums appearing in assumptions \eqref{hyp:condi}-\eqref{hyp:x}.
It is due to the fact that, in this proof, all the terms are multiplied by the test function $\varphi$ and, since the support of $\varphi$ is compact in $\Omega \times (0,T)$, the function $\varphi$ vanishes in all the cells close to the boundary for $m$ large enough.
In this paper, when checking assumptions \eqref{hyp:condi}-\eqref{hyp:x}, we often use this remark to restrict the summation to the internal cells.
\end{remark}

We end with a result on the space translates, which was used several times (and systematically in this paper) to prove the LW-consistency of the schemes.
Note that the convergence of the time translates is a direct consequence of the Kolmogorov theorem, since in the present case the time step is constant; we refer to \cite[Lemma A.3]{gal-21-wea} for varying time steps and possibly multi-point in time schemes.
Let $\mathcal P$ be a polygonal or polyhedral mesh of $\Omega$.
For $(P,Q) \in \mathcal P^2$, let $\mathfrak d (\{P,Q\}) = \max_{(\bfx,\bfy) \in P \times Q} |\bfy-\bfx|$.
Let $\mathcal S_x$ be a set of cardinal 2 - subsets of $\mathcal P$ and let $\mathfrak d(\mathcal P) = \max_{\{P,Q\} \in \mathcal S_x} \mathfrak d (\{P,Q\})$.
Let $(\omega_{P,Q})_{\{P,Q\} \in \mathcal S_x}$ be a set of non-negative weights, and define
\begin{equation} \label{eq:reg_gene}
\begin{array}{l} \displaystyle
\theta_{\mathcal P}=\max_{P \in \mathcal P} \frac 1 {|P|}\ \sum_{\substack{Q \in \mathcal P\\ \{P,Q\} \in \mathcal S_x}} \omega_{P,Q}.
\end{array}
\end{equation}
A time step $\delta t$ is defined as $\delta t = \dfrac T N$ with $N \in \xN, N>1$; for $n = 0, \ldots, N$, se set $t_n = n \delta t$.

For a function $u \in L^1(\Omega \times(0,T))$, let $u_P^n$ be the mean value of $u$ on $P \times (t_n,t_{n+1})$ and let $T_{\mathcal P,\delta t}^{(x)}\, u$  and  $T_{\mathcal P,\delta t}^{(t)}\, u$ be defined by
\begin{equation}\label{transT-u_gene}
T_{\mathcal P,\delta t}^{(x)}\, u= \sum_{n=0}^{N-1}  \delta t \sum_{\{P,Q\} \in \mathcal S_x} \omega_{P,Q}\ |u^n_Q - u^n_P|
\quad \mbox{and } T_{\mathcal P,\delta t}^{(t)} = \sum_{n=0}^{N-1}\ \delta t \sum_{P\in\mathcal P} |P|\ |u^{n+1}_P - u^n_P|.
\end{equation}
Then the following convergence result holds.

\begin{lemma}[Limit of space and time translates]\label{lem:translates}
Let $(\mathcal P\m)_{m \in \xN}$ be a given sequence of meshes and $(\delta t\m)_{m \in \xN}$ a sequence of time steps.
Let us suppose that there exists $\theta > 0$ such that $\theta_{\mathcal P\m} \le \theta$  with $\theta_{\mathcal P\m}$  given by Equation \eqref{eq:reg_gene}.
Let us assume that $\mathfrak d(\mathcal P\m)$ and $\delta t\m$ tend to zero when $m$ tends to $+\infty$.
Let $u \in \xL^1(\Omega\times(0,T))$ and $(u_p)_{p \in \xN}$ be a sequence of functions of $\xL^1(\Omega\times(0,T))$ such that $u_p \to u$ in $\xL^1(\Omega\times(0,T))$ as $p \to +\infty$.\\[0.5ex]
Then $T_{\mathcal P\m,\delta t\m}^{(x)}\, u_p$  and $T_{\mathcal P\m,\delta t\m}^{(t)}\, u_p$ defined by \eqref{transT-u_gene} tend to zero when $m$ tends to $+\infty$, uniformly with respect to $p \in \xN$.
\end{lemma}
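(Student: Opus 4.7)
The plan is a standard three-step density argument. First, observe that both $T^{(x)}_{\mathcal P\m,\delta t\m}$ and $T^{(t)}_{\mathcal P\m,\delta t\m}$, viewed as functionals on $\xL^1(\Omega\times(0,T))$, are sublinear and Lipschitz with a constant that is independent of $m$ (only depending on $\theta$); this is exactly the content of the regularity assumption $\theta_{\mathcal P\m}\le\theta$. Second, prove directly that these quantities go to zero as $m\to+\infty$ when applied to any smooth compactly supported function. Third, use density of $C^1_c(\Omega\times(0,T))$ in $\xL^1(\Omega\times(0,T))$ together with the $\xL^1$-convergence $u_p\to u$ to conclude the uniform-in-$p$ statement.

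\textbf{Step 1: uniform $\xL^1$-Lipschitz bound.} For $v\in\xL^1(\Omega\times(0,T))$ and any admissible mesh and time step, the triangle inequality together with $|P|\,\delta t\,|v_P^n|=\bigl|\int_{t_n}^{t_{n+1}}\int_P v\bigr|\le\int_{t_n}^{t_{n+1}}\int_P|v|$ gives
$$T^{(t)}_{\mathcal P,\delta t}(v)\le \sum_{n=0}^{N-1}\delta t\sum_{P\in\mathcal P}|P|\,\bigl(|v_P^{n+1}|+|v_P^n|\bigr)\le 2\,\Vert v\Vert_{\xL^1}.$$
For the space translate, the triangle inequality yields
$$T^{(x)}_{\mathcal P,\delta t}(v)\le \sum_{n=0}^{N-1}\delta t\sum_{\{P,Q\}\in\mathcal S_x}\omega_{P,Q}\bigl(|v_P^n|+|v_Q^n|\bigr);$$
regrouping the terms by cell $P$ and invoking $\sum_{Q:\{P,Q\}\in\mathcal S_x}\omega_{P,Q}\le\theta_{\mathcal P}|P|\le\theta|P|$ shows that $T^{(x)}_{\mathcal P,\delta t}(v)\le\theta\Vert v\Vert_{\xL^1}$. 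Since both operators are sublinear, these bounds translate into the uniform Lipschitz estimates $|T^{(x)}_{\mathcal P,\delta t}(v)-T^{(x)}_{\mathcal P,\delta t}(w)|\le\theta\,\Vert v-w\Vert_{\xL^1}$ and $|T^{(t)}_{\mathcal P,\delta t}(v)-T^{(t)}_{\mathcal P,\delta t}(w)|\le 2\,\Vert v-w\Vert_{\xL^1}$.

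\textbf{Step 2: vanishing on $C^1_c$.} For $w\in C^1_c(\Omega\times(0,T))$, a direct estimate of the difference of two averages gives $|w_P^{n+1}-w_P^n|\le\Vert\partial_t w\Vert_\infty\,\delta t\m$ and $|w_P^n-w_Q^n|\le\Vert\gradi w\Vert_\infty\,\mathfrak d(\{P,Q\})\le\Vert\gradi w\Vert_\infty\,\mathfrak d(\mathcal P\m)$. Summing yields $T^{(t)}_m(w)\le\Vert\partial_t w\Vert_\infty\,\delta t\m\,T\,|\Omega|\to 0$, and $T^{(x)}_m(w)\le\Vert\gradi w\Vert_\infty\,\mathfrak d(\mathcal P\m)\,\theta\,T\,|\Omega|\to 0$, because $\delta t\m\to 0$ and $\mathfrak d(\mathcal P\m)\to 0$. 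Combining with the Lipschitz bound of Step 1 and the density of $C^1_c(\Omega\times(0,T))$ in $\xL^1(\Omega\times(0,T))$, this extends to $T^{(x)}_m(v)\to 0$ and $T^{(t)}_m(v)\to 0$ for every single $v\in\xL^1(\Omega\times(0,T))$.

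\textbf{Step 3: uniformity in $p$.} Fix $\varepsilon>0$. Since $u_p\to u$ in $\xL^1$, there exists $p_0\in\xN$ such that $\Vert u_p-u\Vert_{\xL^1}<\varepsilon/(2\max(\theta,2))$ for all $p\ge p_0$. By Step 2 applied to $u$, there exists $m_1$ such that $T^{(x)}_m(u)<\varepsilon/2$ and $T^{(t)}_m(u)<\varepsilon/2$ for $m\ge m_1$; the Lipschitz bound of Step 1 then yields $T^{(x)}_m(u_p)<\varepsilon$ and $T^{(t)}_m(u_p)<\varepsilon$ for $m\ge m_1$ and $p\ge p_0$. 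For the finitely many remaining indices $p<p_0$, Step 2 applied to each $u_p$ individually provides thresholds $m_2^p$; taking $m^\star=\max(m_1,\max_{p<p_0}m_2^p)$ gives the desired uniform conclusion. The main difficulty, if any, is bookkeeping the constants carefully so as to confirm that the Lipschitz constant in Step 1 is truly independent of $m$; once this is checked, the result reduces to the classical continuity of translations in $\xL^1$.
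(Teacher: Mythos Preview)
Your proof is correct. The paper itself does not prove this lemma: it only states it, remarks that the time-translate part ``is a direct consequence of the Kolmogorov theorem'', and refers to \cite{gal-21-wea} for the general result. Your argument---a uniform $\xL^1$-Lipschitz bound for the translate functionals (this is exactly where the regularity hypothesis $\theta_{\mathcal P\m}\le\theta$ enters), followed by a direct check on $C^1_c$ and a density/$\varepsilon$-argument to get uniformity in $p$---is the standard route and is essentially what underlies the Kolmogorov-type statement the paper invokes. One cosmetic point: in Step~2 your bound on $\sum_{\{P,Q\}\in\mathcal S_x}\omega_{P,Q}$ actually yields $\tfrac{\theta}{2}|\Omega|$ rather than $\theta|\Omega|$ (each unordered pair is counted twice when you sum over cells), but this only sharpens the constant and changes nothing in the argument.
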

\end{document}